\newcommand{\ZZ}{\mathbb{Z}}
\newcommand{\QQ}{\mathbb{Q}}
\newcommand{\HHH}{{\mathcal{H}}}
\newcommand{\W}{{\rm{Weak}}}
\newcommand{\D}{{\rm{Des}}}
\newcommand{\inv}{\operatorname{inv}}
\newcommand{\Inv}{\operatorname{Inv}}
\newcommand{\rank}{\operatorname{rank}}
\newcommand{\diameter}{\operatorname{diam}}
\newcommand{\radius}{\operatorname{rad}}
\newcommand{\Cayley}{\operatorname{Cayley}}
\newcommand{\rp}{\operatorname{right-pairs}}
\newcommand{\lp}{\operatorname{left-pairs}}
\newcommand{\Sp}{\operatorname{Supp}}
\newcommand{\area}{\operatorname{area}}
\newcommand{\bmaj}{\operatorname{bmaj}}
\newcommand{\tC}{{\tilde{C}}}
\newcommand{\e}{\epsilon}
\newcommand{\tw}{{\tilde{w}}}
\newtheorem{theorem}{Theorem}[section]
\newtheorem{corollary}[theorem]{Corollary}
\newtheorem{proposition}[theorem]{Proposition}
\newtheorem{conjecture}[theorem]{Conjecture}
\newtheorem{lemma}[theorem]{Lemma}
\newtheoremstyle{defn}{1.2ex}{1.2ex}{}{}{}{.}{.5em}%
{\textbf{\thmname{#1}\thmnumber{ #2}}\thmnote{\emph{ (#3)}}}
\theoremstyle{defn}
\newtheorem{defn}[theorem]{Definition}
\newtheorem{notation}[theorem]{Notation}
\newtheorem{example}[theorem]{Example}
\newtheorem{remark}[theorem]{Remark}
\newtheorem{question}[theorem]{Question}
\newtheorem{fact}[theorem]{Fact}
\newtheorem{observation}[theorem]{Observation}
\newtheorem{claim}[theorem]{Claim}
\numberwithin{figure}{section}
\numberwithin{table}{section}
\begin{document}
\title[Maximal chains in the non-crossing partition lattice]{On maximal chains\\in the non-crossing partition lattice}

\author{Ron M.\ Adin}
\address{Department of Mathematics\\
Bar-Ilan University\\
52900 Ramat-Gan\\
Israel} \email{radin@math.biu.ac.il}

\author{Yuval Roichman}
\address{Department of Mathematics\\
Bar-Ilan University\\
52900 Ramat-Gan\\
Israel} \email{yuvalr@math.biu.ac.il}

\keywords{Coxeter group, symmetric group, $0$-Hecke algebra,
reduced word, weak order, radius, Catalan number, Hurwitz action}


\begin{abstract}
A weak order on the set of maximal chains of the non-crossing
partition lattice is introduced and studied. A $0$-Hecke algebra
action is used to compute the radius of the graph on these chains
in which two chains are adjacent if they differ in exactly one
element.
\end{abstract}

\date{Dec.\ 23, '13}

\maketitle

\tableofcontents

\section{Introduction}
\label{section:intro}


Consider the graph $G_T(n)$ with vertex set consisting of all
maximal chains in $NC(n)$, the non-crossing partition lattice  of
type $A_{n-1}$, where two chains are adjacent if they differ in
exactly one element. This graph may be identified with the graph
of all reduced words of a given Coxeter element (long cycle) in
the symmetric group $S_n$, where the alphabet consists of all
reflections (transpositions) and two words are adjacent if they
agree in all but two adjacent letters, which are $(s, t)$ in one word
and either $(t^s, s)$ or $(t, s^t)$ in the other; here $g^h:=h^{-1}gh$. 
This graph is known to be connected -- see, e.g., \cite[Prop.\ 1.6.1]{Bessis};
we are interested in calculating its radius. This is motivated
by~\cite{AutordDehornoy} and \cite{Reiner-R}, where the analogous
question for simple reflections was studied, and by~\cite{HHMMN}
which evaluated the radius of a related graph on labeled trees.
Recall that a classical result of Hurwitz~\cite{Hurwitz} 
implies (see \cite{Denes, Strehl}) that
the number of maximal chains in 
the non-crossing partition lattice of type $A_{n-1}$ 
is equal to the number of labeled trees on $n$ vertices.


Our approach is to consider a 
$0$-Hecke algebra action on the set of maximal chains. This allows
us to define a well-behaved natural weak order on
this set. 
Each maximal interval is isomorphic to the weak order on the
symmetric group. The number of maximal elements, refined by a
generalized inversion number, is the Carlitz-Riordan $q$-Catalan
number. The resulting undirected Hasse diagram spans the graph
$G_T(n)$ of maximal chains, implying an evaluation of the radius
and an approximation of the diameter up to a factor of $3/2$.

\section{Basic concepts}
\label{section:basic_concepts}

The non-crossing partition lattice $NC(n)$, first introduced by
Kreweras~\cite{Kreweras}, may be defined as follows.

Let $T$ be the set of all reflections (transpositions) in the
symmetric group $S_n$, and let $\ell_T(\cdot)$ be the
corresponding length function: $\ell_T(\pi)$ is the minimal number
of factors in an expression of $\pi$ as a product of reflections.
Let $c$ be a Coxeter element in this group (i.e., a cycle of
length $n$); for concreteness, take $c = (1,2,\ldots,n)$. Then
$NC(n)$ is the set
$$
\{\pi\in S_n\,:\,\ell_T(\pi)+\ell_T(\pi^{-1}c)=\ell_T(c)\}
$$
ordered by
$$
\pi\le \sigma \iff
\ell_T(\pi)+\ell_T(\pi^{-1}\sigma)=\ell_T(\sigma).
$$


\begin{defn}
Let $F_n$ be the set of all maximal chains in the non-crossing partition lattice $NC(n)$.
\end{defn}

Clearly, an element of $F_n$ corresponds to a factorization of $c$
into a minimal number ($\ell_T(c) = n-1$) of transpositions. It
can thus be written in the form $(t_1,\ldots,t_{n-1})$, where $t_i
\in T$ $(1\le i\le n-1)$ and $t_1 \cdots t_{n-1} = c$.


\begin{defn}\label{d.Hurwitz_graph}
The {\em Hurwitz graph} $G_T(n)$ is the (undirected) graph with vertex set $F_n$,
where two chains are adjacent if they differ in exactly one element.
\end{defn}


The Hurwitz graph $G_T(4)$ is drawn in Figure~\ref{figure1-tik};
note that $|F_4| = 16$, and the midpoint of the figure is not a
vertex.

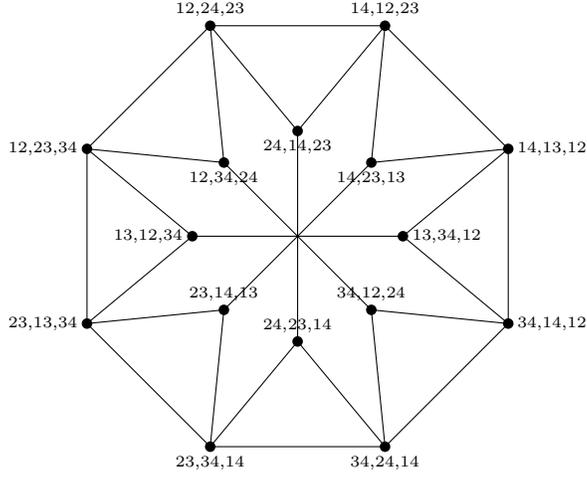
\begin{figure}[htb]
\begin{center}

\begin{tikzpicture}[scale=0.7]
\fill (-4,1.66) circle (0.1) node[left]{\tiny 12,23,34};

\fill (-1.66,4) circle (0.1) node[above]{\tiny 12,24,23};

\fill (1.66,4) circle (0.1) node[above]{\tiny 14,12,23};

\fill (4,1.66) circle (0.1) node[right]{\tiny 14,13,12};

\fill (4,-1.66) circle (0.1) node[right]{\tiny 34,14,12};

\fill (1.66,-4) circle (0.1) node[below]{\tiny 34,24,14};

\fill (-1.66,-4) circle (0.1) node[below]{\tiny 23,34,14};

\fill (-4,-1.66) circle (0.1) node[left]{\tiny 23,13,34};

\fill (-2,0) circle (0.1) node[left]{\tiny 13,12,34};

\fill (-1.4,1.4) circle (0.1) node[below]{\tiny 12,34,24};

\fill (0,2) circle (0.1) node[below]{\tiny 24,14,23};

\fill (1.4,1.4) circle (0.1) node[below]{\tiny 14,23,13};

\fill (2,0) circle (0.1) node[right]{\tiny 13,34,12};

\fill (1.4,-1.4) circle (0.1) node[above]{\tiny 34,12,24};

\fill (0,-2) circle (0.1) node[above]{\tiny 24,23,14};

\fill (-1.4,-1.4) circle (0.1) node[above]{\tiny 23,14,13};

\draw
(-4,1.66)--(-1.66,4)--(1.66,4)--(4,1.66)--(4,-1.66)--(1.66,-4)--(-1.66,-4)--(-4,-1.66)--(-4,
1.66);


\draw (-2,0)--(2,0); \draw (0,-2)--(0,2);

\draw (-1.4,-1.4)--(1.4,1.4); \draw (-1.4,1.4)--(1.4,-1.4);

\draw (-4,1.66)--(-1.4,1.4);

\draw (-1.66,4)--(-1.4,1.4);

\draw (-1.66,4)--(0,2); \draw (1.66,4)--(0,2);

\draw (4,1.66)--(1.4,1.4); \draw (1.66,4)--(1.4,1.4);

\draw (4,1.66)--(2,0); \draw (4,-1.66)--(2,0);

\draw (-4,1.66)--(-2,0); \draw (-4,-1.66)--(-2,0);

\draw (-1.66,-4)--(0,-2); \draw (1.66,-4)--(0,-2);

\draw (4,-1.66)--(1.4,-1.4); \draw (1.66,-4)--(1.4,-1.4);

\draw (-4,-1.66)--(-1.4,-1.4); \draw (-1.66,-4)--(-1.4,-1.4);


\end{tikzpicture}
\caption{\label{figure1-tik} The Hurwitz graph $G_T(4)$}
\end{center}
\end{figure}



\begin{defn}
For each $1 \le i \le n-2$ 
and $v=(t_1,\ldots,t_{n-1})\in F_n$ let
$$
R_i(v):= (t_1,\ldots,t_{i-1},t_{i+1}^{t_i},t_i,t_{i+2},\ldots,t_{n-1})
$$
and
$$
L_i(v):= (t_1,\ldots,t_{i-1},t_{i+1},t_i^{t_{i+1}},t_{i+2},\ldots,t_{n-1}),
$$
where $g^h:= h^{-1}gh$.
\end{defn}

$R_i$ and $L_i$ $(1 \le i \le n-2)$ are operators on $F_n$, where
$R_i(v)$ slides the $i$-th factor to the right
while $L_i(v)$ slides the $(i+1)$-st factor to the left.
Clearly $R_i = L_i^{-1}$ and, for every $v\in F_n$, either $R_i^2(v)=v$
(if $t_i$ and $t_{i+1}$ commute) or $R_i^3(v)=v$ (otherwise).
Furthermore, the following is true.

\begin{proposition}\label{t.Arnold}\cite{Hurwitz}
For every $1 \le i \le n-2$,
$$
R_i R_{i+1} R_i = R_{i+1} R_i R_{i+1}.
$$
\end{proposition}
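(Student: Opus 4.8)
\medskip
\noindent\textbf{Proof proposal.}
The plan is to reduce the braid relation to a purely local, three-letter computation. The key observation is that $R_j$ alters only the entries in positions $j$ and $j+1$ of a tuple in $F_n$ and leaves every other entry unchanged. Consequently both compositions $R_iR_{i+1}R_i$ and $R_{i+1}R_iR_{i+1}$ touch only positions $i$, $i+1$, $i+2$; on every other position they act as the identity, hence agree there automatically. It therefore suffices to track the action of the two sides on the triple $(a,b,c):=(t_i,t_{i+1},t_{i+2})$ of consecutive entries. Denoting by $\rho$ and $\lambda$ the maps induced by $R_i$ and $R_{i+1}$ on this triple, we have
$$
\rho(a,b,c)=(b^a,a,c),\qquad \lambda(a,b,c)=(a,c^b,b),
$$
and the proposition amounts to the single identity $\rho\lambda\rho=\lambda\rho\lambda$, which I would prove for arbitrary group elements $a,b,c$ (the hypothesis that they are transpositions plays no role).

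Next I would simply compute both sides. Recall that, with the convention $g^h=h^{-1}gh$, the assignment $h\mapsto(\cdot)^h$ is a right action, so that $(g^h)^k=g^{hk}$ and $(gh)^k=g^kh^k$. Applying $\rho,\lambda,\rho$ in turn yields
$$
(a,b,c)\longmapsto(b^a,a,c)\longmapsto(b^a,c^a,a)\longmapsto\bigl((c^a)^{b^a},\,b^a,\,a\bigr),
$$
while applying $\lambda,\rho,\lambda$ in turn yields
$$
(a,b,c)\longmapsto(a,c^b,b)\longmapsto\bigl((c^b)^a,\,a,\,b\bigr)\longmapsto\bigl((c^b)^a,\,b^a,\,a\bigr).
$$
The last two coordinates already coincide, and the first coordinates agree because
$$
(c^a)^{b^a}=(b^a)^{-1}c^a(b^a)=(a^{-1}b^{-1}a)(a^{-1}ca)(a^{-1}ba)=a^{-1}b^{-1}cba=(c^b)^a.
$$
Thus $\rho\lambda\rho=\lambda\rho\lambda$, and unwinding the reduction gives $R_iR_{i+1}R_i=R_{i+1}R_iR_{i+1}$ on all of $F_n$.

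I do not expect a genuine obstacle here: the whole argument is bookkeeping. The two places where care is needed are, first, the (easy) verification that $R_i$ and $R_{i+1}$ really do act as the identity off positions $\{i,i+1,i+2\}$, so that the global braid relation reduces to the three-letter one; and second, keeping the conjugation convention consistent so that the telescoping cancellation $(c^a)^{b^a}=(c^b)^a$ is carried out correctly. It is also worth noting in passing that every intermediate triple above again comes from an element of $F_n$, since each $R_j$ preserves both the product $t_1\cdots t_{n-1}=c$ and the number of factors; this is already built into the preceding remark that the $R_j$ are operators on $F_n$.
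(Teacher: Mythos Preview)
Your proof is correct; the computation is clean and the key identity $(c^a)^{b^a}=(c^b)^a$ is exactly what makes the braid relation hold for the Hurwitz action on tuples in any group. The paper does not supply its own proof of this proposition --- it is stated with a citation to Hurwitz and then simply remarked that this defines a braid group action on $F_n$ --- so there is nothing to compare against beyond noting that your direct local computation is the standard way to verify this well-known fact.
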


This clearly defines a braid group action on $F_n$,
a distinguished example of the Hurwitz action on ordered tuples of
group elements.




\begin{observation}
For $u,v\in F_n$, $\{u,v\}$ is an edge of $G_T(n)$ if and only if there exists
an $1 \le i \le n-2$ such that either $v=R_i(u)$ or $v=L_i(u)$.
\end{observation}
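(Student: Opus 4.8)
The plan is to translate adjacency in $G_T(n)$ into a statement about reduced $T$-factorizations of $c$ and then reduce it to the elementary classification of the ways a fixed group element of reflection length $2$ factors into two reflections. First I would fix the dictionary: a maximal chain $v=(\pi_0<\pi_1<\dots<\pi_{n-1})$ of $NC(n)$, with $\pi_0=\mathrm{id}$, $\pi_{n-1}=c$ and $\ell_T(\pi_k)=k$, corresponds to the factorization $(t_1,\dots,t_{n-1})$ via $t_k=\pi_{k-1}^{-1}\pi_k$ and $\pi_k=t_1\cdots t_k$. Now suppose $u,v\in F_n$ differ in exactly one element. Since each maximal chain contains exactly one element of each rank $0,1,\dots,n-1$, and both contain $\mathrm{id}$ and $c$, there is a unique index $i$ with $1\le i\le n-2$ such that $\pi_k=\pi_k'$ for all $k\ne i$ while $\pi_i\ne\pi_i'$. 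Passing to factorizations, this is equivalent to: $t_k=t_k'$ for $k\notin\{i,i+1\}$; $t_it_{i+1}=\pi_{i-1}^{-1}\pi_{i+1}=t_i't_{i+1}'=:w$ (so that $\ell_T(w)=2$); and $t_i\ne t_i'$. Conversely, any pair of chains related in this way differs in exactly one element.

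For the direction $(\Leftarrow)$, suppose $v'=R_i(u)$. Since $R_i$ is an operator on $F_n$, we have $v'\in F_n$; it agrees with $u$ outside positions $i,i+1$, and since $t_{i+1}^{t_i}\cdot t_i=t_it_{i+1}$, all partial products $\pi_k$ with $k\ne i$ are unchanged. The coordinate $\pi_i$ does change, for $\pi_{i-1}t_{i+1}^{t_i}=\pi_{i-1}t_i$ would force $t_{i+1}=t_i$, hence $t_it_{i+1}=\mathrm{id}$ and $\ell_T(c)\le n-3$, a contradiction. Thus $u$ and $R_i(u)$ differ in exactly one element. The case $v'=L_i(u)$ is identical (now using $t_{i+1}\cdot t_i^{t_{i+1}}=t_it_{i+1}$), or follows from the previous case since $u=R_i(v')$.

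For the direction $(\Rightarrow)$ --- the substantive part --- it suffices, by the dictionary, to prove: if $(t_i,t_{i+1})$ and $(t_i',t_{i+1}')$ are distinct ordered factorizations of one and the same $w$ with $\ell_T(w)=2$, then $(t_i',t_{i+1}')$ is the pair occupying positions $i,i+1$ in $R_i(u)$ or in $L_i(u)$; this then gives $v=R_i(u)$ or $v=L_i(u)$, since $u$ and $v$ already agree elsewhere. I would use the dichotomy for length-$2$ elements: $w$ is either a $3$-cycle or a product of two disjoint transpositions, and a direct count shows $w$ has exactly $3$ ordered factorizations into two reflections in the first case and exactly $2$ in the second. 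On the other hand, $R_i(u)$ and $L_i(u)$ preserve the product $t_it_{i+1}=w$, so the pairs they place in positions $i,i+1$ are again factorizations of $w$, each different from $(t_i,t_{i+1})$ (as $R_i(u)=u$ or $L_i(u)=u$ would force $t_i=t_{i+1}$). Recalling that $L_i=R_i^{-1}$ and that either $R_i^2(u)=u$ --- the commuting case, where $w$ is a double transposition and $R_i(u)=L_i(u)$ --- or $R_i^3(u)=u$ --- the $3$-cycle case, where $R_i^2(u)=L_i(u)$ --- one checks that $(t_i,t_{i+1})$ together with the position-$i,i+1$ pairs of $R_i(u)$ and $L_i(u)$ already account for all $2$, respectively $3$, factorizations of $w$. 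Hence $(t_i',t_{i+1}')$, being a factorization of $w$ distinct from $(t_i,t_{i+1})$, must be the one coming from $R_i(u)$ or from $L_i(u)$.

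I expect no serious obstacle here. The only genuine (and very short) computation is the count of ordered reflection-factorizations of a $3$-cycle and of a double transposition; everything else is bookkeeping with partial products together with the properties of $R_i$ and $L_i$ already recorded above. The one point to handle with a little care is the reduction in the first paragraph --- that two maximal chains differing in exactly one element must differ at a single interior rank and coincide at every other rank.
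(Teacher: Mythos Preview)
Your argument is correct and complete. The paper itself states this as an \emph{Observation} without proof, treating it as immediate from the definitions; your proposal supplies the natural justification via the dictionary between chains and factorizations together with the elementary classification of ordered reflection-factorizations of a length-$2$ element (three for a $3$-cycle, two for a product of disjoint transpositions), which is exactly what underlies the remark preceding Proposition~\ref{t.Arnold} that $R_i^2(v)=v$ or $R_i^3(v)=v$.
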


\begin{remark}
The classical Tits graph of a Coxeter group is the graph whose vertices are 
all the reduced words for the longest element of the group,
in terms of the set of {\em simple} reflections $S$, and where two words are
adjacent if one is obtained from the other by a braid relation.
In other words, vertices correspond to maximal chains in the weak
order on the group, and two chains are adjacent if they differ in a ``minimal cycle''.
Replacing the set $S$ of simple reflections by the set $T$ of all reflections,
one obtains the Hurwitz graph.
The diameters of the Tits graphs of the Coxeter groups of types $A$ and $B$
were evaluated in~\cite{Reiner-R}.
Bessis proved that the Hurwitz action on $F_n$ is transitive~\cite[Prop.~1.6.1]{Bessis},
thus that the Hurwitz graph $G_T(n)$ is connected.
Its radius and
bounds on its diameter will be given in Section~\ref{section:diameter}.
\end{remark}


\begin{defn}\label{d.order}
For $u,v\in F_n$, denote $u\le v$ if $u$ belongs to a
geodesic in $G_T(n)$ from $e:=((1,2),(2,3),\ldots,(n-1,n))$ to $v$.
This will be called {\em the weak order on $F_n$}, and
the resulting poset will be denoted by $\W(F_n)$.
\end{defn}

The Hasse diagram of $\W(F_4)$  is drawn in
Figure~\ref{figure2-tik}.
This poset 
is neither a meet semilattice nor a join semilattice, as shown by the two pairs of elements
$\{(14,23, 13), (24,23,14)\}$ and $\{(23,13,34), (12,24,23)\}$.

\begin{figure}[htb]
\begin{center}

\begin{tikzpicture}[scale=0.7]
\fill (0,0) circle (0.1) node[below]{\tiny e=12,23,34};

\fill (-3,2.5) circle (0.1) node[left]{\tiny 23,13,34};

\fill (-1,2.5) circle (0.1) node[left]{\tiny 13,12,34};

\fill (1,2.5) circle (0.1) node[right]{\tiny 12,34,24};

\fill (3,2.5) circle (0.1) node[right]{\tiny 12,24,23};

\fill (-5,5) circle (0.1) node[left]{\tiny 23,14,13};

\fill (-3,5) circle (0.1) node[left]{\tiny 23,34,14};

\fill (-1,5) circle (0.1) node[left]{\tiny 13,34,12};

\fill (1,5) circle (0.1) node[right]{\tiny 34,12,24};

\fill (3,5) circle (0.1) node[right]{\tiny 14,12,23};

\fill (5,5) circle (0.1) node[right]{\tiny 24,14,23};

\fill (-4,7.5) circle (0.1) node[above]{\tiny 14,23,13};

\fill (-2,7.5) circle (0.1) node[above]{\tiny 14,13,12};

\fill (0,7.5) circle (0.1) node[above]{\tiny 34,14,12};

\fill (2,7.5) circle (0.1) node[above]{\tiny 34,24,14};

\fill (4,7.5) circle (0.1) node[above]{\tiny 24,23,14};

\draw
(0,0)--(-3,2.5)--(-5,5)--(-4,7.5)--(3,5);

\draw (0,0)--(3,2.5)--(5,5)--(4,7.5)--(-3,5);

\draw (0,0)--(-1,2.5)--(-1,5)--(0,7.5)--(1,5)--(1,2.5)--(0,0);

\draw (-3,2.5)--(-3,5);

\draw (3,2.5)--(3,5);

\draw (-1,5)--(-2,7.5)--(3,5);

\draw (1,5)--(2,7.5)--(-3,5);


\end{tikzpicture}
\caption{\label{figure2-tik} The Hasse diagram of $\W(F_4)$}
\end{center}
\end{figure}
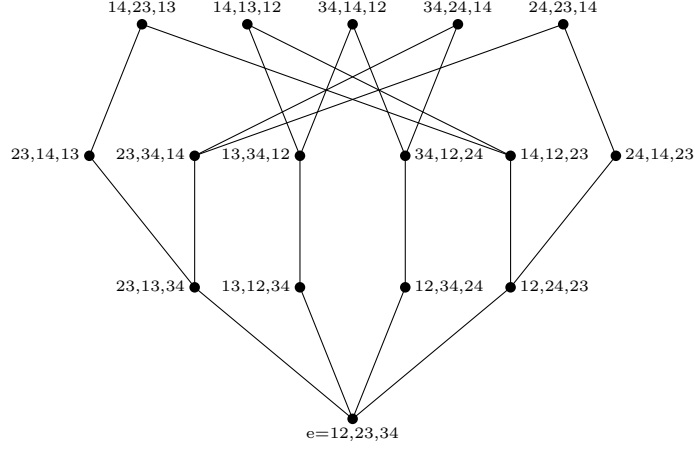



\begin{remark}\label{weak-remark}
To motivate the name ``weak order'' consider
%
a Coxeter group $W$, with standard generating set $S$, 
acting transitively on a set $X$. Choose an element $e$ in $X$.
For every $y\in X$, let $\ell_S(y)$ be the distance from
$e$ to $y$ in the associated Schreier graph $\Gamma(W/Stab(e), S)$;
and for every $y,z\in X$, define $z$ to {\em cover} $y$ if there
exists some $s\in S$ such that $z=s(y)$ and $\ell_S(z)=\ell_S(y)+1$.
This defines a natural ``weak order'' on $X$, with $e$ as a minimal element;
%
analogous concepts of strong and absolute orders on $W$-sets were
studied in~\cite{Rains-Vazirani, Athanasiadis-R}.
%
%
%
%
%
%
In our case, by Proposition~\ref{t.Arnold}
$F_n$ carries a transitive action of the braid group of type $A$
which covers the Coxeter group $S_{n-1}$,  
and can thus be equipped with a similar order. 
%
Furthermore, it will be shown later that maximal intervals in this
order are isomorphic to the weak order on $S_{n-1}$.
%
\end{remark}


The poset $\W(F_n)$ has several nice properties, such as a $0$-Hecke algebra action,
the above mentioned structure of maximal intervals, and a Catalan number of maximal elements.
It will be used to evaluate the radius and bound the diameter of the graph $G_T(n)$.

\section{A map from $F_n$ to $S_{n-1}$}
\label{section:map}

Let $F_n$ be as in Section~\ref{section:intro}.
Define a map $\phi: F_n\rightarrow S_{n-1}$ as follows.
\begin{defn}\label{d.sigma}
For $w=(t_1,\ldots,t_{n-1})\in F_n$ define the partial products
$$
\sigma_j:= t_j t_{j+1}\cdots t_{n-1} \qquad (1\le j\le n-1)
$$
and the empty product
$$
\sigma_n:= id.
$$
\end{defn}
By definition, $\sigma_j= t_j \sigma_{j+1}$ for every $1\le j\le n-1$.
Thus $\sigma_j(i)\ne \sigma_{j+1}(i)$ for exactly two values of $1\le i\le n$,
and $\sigma_j(i)> \sigma_{j+1}(i)$ for exactly one such value.
Ignoring $i=n$, the set
$$
A_j:=\{1\le i\le n-1\,|\,\sigma_j(i)>\sigma_{j+1}(i)\}
$$
thus satisfies
\begin{equation}\label{e.1}
\left|A_j\right|\le 1\qquad(1\le j\le n-1).
\end{equation}
On the other hand, $\sigma_1(i) = c(i) = i+1$ while $\sigma_n(i)=i$,
so that $i\in \bigcup_{j=1}^{n-1} A_j$ for every $1\le i \le n-1$.
It follows that
\begin{equation}\label{e.2}
\left|\bigcup_{j=1}^{n-1} A_j\right|\ge n-1.
\end{equation}
Combining (\ref{e.1}) with (\ref{e.2}) one concludes that
$$
\left|\bigcup_{j=1}^{n-1} A_j\right|= n-1
$$
and
$$
\left|A_j\right|=1 \qquad(\forall j).
$$
It follows that the map $\pi_w$ defined by
$$
\pi_w (j):= i \text{\ if\ } A_j=\{i\}
$$
is a permutation in $S_{n-1}$.
\begin{defn}
Define $\phi:F_n \to S_{n-1}$ by
$$
\phi(w):=\pi_w\qquad(\forall w\in F_n).
$$
\end{defn}

\begin{observation}\label{obs.phi}
For every $w\in F_n$ and $1\le j\le n-1$, if $t_j=(a,b)$ with $a<b$ then
$$
\phi(w)(j)= \sigma_{j+1}^{-1}(a).
$$
\end{observation}

\medskip

\begin{example}
Let $w=((2,5),(1,5),(2,4),(2,3))\in F_5$. Then $\sigma_5=id$, $\sigma_4=(2,3)$,
$\sigma_3=(2,3,4)$, $\sigma_2=(2,3,4)(1,5)$ and $\sigma_1=(1,2,3,4,5)=c$,
where permutations are written in cycle notation. Thus
$\phi(w)(1) = 4 = \sigma_2^{-1}(2)$,
$\phi(w)(2) = 1 = \sigma_3^{-1}(1)$,
$\phi(w)(3) = 3 = \sigma_4^{-1}(2)$
and $\phi(w)(4) = 2 = \sigma_5^{-1}(2)$.
\end{example}

\bigskip

For a sequence $w = ((a_1,b_1), \ldots, (a_{\ell},b_{\ell}))$ of $\ell$
transpositions in $S_n$, let $G(w)$ be the geometric graph
whose vertices are $\{1, \ldots, n\}$ drawn on a circle and
whose edges are $\{\{a_j,b_j\}:\ 1 \le j \le \ell\}$. The following characterization
of the words in $F_n$, due to Goulden and Yong, is very useful.

\begin{proposition}\label{GY22} \cite[Theorem 2.2]{Goulden-Yong}\\
For $t_1, \ldots, t_{n-1} \in T(S_n)$,
the word $w = (t_1, \ldots, t_{n-1})$ belongs to $F_n$ if and only if
the following three conditions hold:
\begin{itemize}
\item[(i)]
$G(w)$ is a tree.
\item[(ii)]
$G(w)$ is non-crossing; namely, two edges may intersect only in a common vertex.
\item[(iii)] Cyclically decreasing neighbors:
For every $1 \le a \le n$ and $1 \le i < j \le n-1$, if $t_i = (a,c)$ and $t_j = (a,b)$ then $c >_a b$.
Here $<_a$ is the linear order $a <_a a+1 <_a \cdots <_a n <_a 1 <_a \cdots <_a a-1$.
\end{itemize}
\end{proposition}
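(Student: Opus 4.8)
The plan is to reduce the statement to a purely combinatorial assertion about edge-labeled trees. Since $\ell_T(c)=n-1$, a tuple $w=(t_1,\dots,t_{n-1})$ of transpositions lies in $F_n$ if and only if $t_1\cdots t_{n-1}=c$, minimality being automatic. The classical lemma on products of transpositions (right-multiplication by a transposition changes the number of cycles by $\pm1$, decreasing it exactly when the two transposed points lie in different cycles) shows that a product of $n-1$ transpositions can be an $n$-cycle only if every factor merges two cycles, which forces $G(w)$ to be connected and hence, having $n$ vertices and $n-1$ edges, a tree; conversely, deleting a leaf and inducting on $n$ shows that whenever $G(w)$ is a tree the product $\gamma:=t_1\cdots t_{n-1}$ is some $n$-cycle. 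So (i) is equivalent to ``$\gamma$ is an $n$-cycle'', and it remains to decide, for $w$ with $G(w)$ a tree, exactly when $\gamma=c$, in terms of (ii) and (iii).

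The key device is a combinatorial formula for $\gamma(a)$. Pushing $a$ through $t_{n-1},t_{n-2},\dots,t_1$ and tracking the moving point, one sees that it is first displaced at the largest position $j$ with $a\in t_j$, stepping to the other endpoint of that edge, and that thereafter, at every vertex it reaches, it steps along the incident edge of largest position strictly below the one just used. As the positions strictly decrease no edge, hence no vertex, is repeated, so the trajectory is a simple path $a=v_0,v_1,\dots,v_r$ in the tree $G(w)$ with $\gamma(a)=v_r$. Now condition (iii) says exactly that at each vertex $v$ the incident edges, listed by decreasing position, have $<_v$-increasing other endpoints; hence the rule above leaves $a$ along $a$'s $<_a$-least neighbor and exits every subsequent vertex along the $<_v$-successor, among its neighbors, of the edge by which it entered. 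In other words the path turns consistently at each vertex. Granting also (ii), so that $G(w)$ is drawn without crossings on the circle $1,\dots,n$, such a consistently turning path is an arc of the boundary walk of this plane tree, and that walk meets the circle vertices in cyclic order; so the path runs from $a$ to $a+1$ (mod $n$). Thus $\gamma(a)=a+1$ for all $a$, i.e.\ $\gamma=c$ and $w\in F_n$, giving sufficiency. (Sufficiency can also be proved by induction on $n$, deleting the leaf at the edge labeled $n-1$ and relabeling the remaining vertices in circular order, once one checks that (i)--(iii) persist under this operation.) For necessity, if $\gamma=c$ then $\gamma$ is an $n$-cycle, so (i) holds; and were (ii) or (iii) to fail, the formula above would yield a path from some $a$ ending away from $a+1$, contradicting $\gamma=c$, so (ii) and (iii) hold as well. (Alternatively, (ii) and (iii) can be read off from the fact that each partial product $\sigma_j=t_j\cdots t_{n-1}$ lies in $NC(n)$ -- since $\ell_T(\sigma_j)+\ell_T(\sigma_j^{-1}c)\le(n-j)+(j-1)=n-1=\ell_T(c)$ -- and so its cycles form a non-crossing partition, merged a pair at a time along the chords $t_j$.)

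The step I expect to be the real obstacle is the one where the non-crossing hypothesis is genuinely used: proving that a consistently turning, strictly decreasing-label path in a non-crossing plane tree is exactly an arc of its boundary walk, and therefore joins each circle vertex to its cyclic successor, and conversely that when two edges cross or the labels around some vertex are out of $<_v$-order this conclusion fails for a suitable $a$. This is a statement about labeled plane trees with no reference to $S_n$; it is precisely the content of \cite[Theorem~2.2]{Goulden-Yong}, which we invoke here rather than reprove.
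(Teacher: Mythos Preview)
The paper does not prove this proposition at all: it is stated with the citation \cite[Theorem~2.2]{Goulden-Yong} and used as a black box thereafter. Your proposal therefore goes well beyond what the paper does. The sketch you give---that $G(w)$ being a tree is equivalent to $\gamma=t_1\cdots t_{n-1}$ being \emph{some} $n$-cycle, and that $\gamma(a)$ is computed by the ``consistently turning'' path in the labeled tree, which under (ii)--(iii) becomes an arc of the boundary walk of the plane tree and hence lands at $a+1$---is essentially the structure of the Goulden--Yong argument and is sound.

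One point worth flagging: in your final paragraph you defer the key combinatorial step to \cite[Theorem~2.2]{Goulden-Yong}, which is exactly the proposition under discussion. So strictly your write-up is not a self-contained proof but an annotated citation---a reduction of the $S_n$ statement to the plane-tree statement, followed by an appeal to the original source for that statement. That is perfectly in keeping with (indeed more informative than) the paper's own treatment, but if you intend this as an independent proof you would need to supply the boundary-walk argument directly rather than invoke the theorem being proved.
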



Proposition~\ref{GY22}(iii) may be stated in the following equivalent form.
\begin{corollary}\label{t.avoid}
For $w = (t_1, \ldots, t_{n-1}) \in F_n$ and $i < j$,
if $t_i$ and $t_j$ do not commute then the pair $(t_i, t_j)$ is 
either $((a,c),(a,b))$, $((b,c),(a,c))$ or $((a,b),(b,c))$,
for some $a < b < c$.
\end{corollary}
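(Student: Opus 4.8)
The plan is to deduce Corollary~\ref{t.avoid} directly from Proposition~\ref{GY22}, reading off what its three conditions say about a single non-commuting pair $(t_i,t_j)$ with $i<j$. First I would observe that two transpositions in $S_n$ fail to commute precisely when their supports share exactly one point; so write $t_i = (a,c)$ and $t_j = (a,b)$ with $a$ the common element and $b \ne c$, $b,c \ne a$. All that remains is to pin down the cyclic order of $a$, $b$, $c$. The claim is that, up to relabeling, $\{t_i,t_j\}$ is one of the three listed ``consecutive'' shapes $((a,c),(a,b))$, $((b,c),(a,c))$, $((a,b),(b,c))$ for some genuine linear order $a<b<c$ — equivalently, that on the circle the two shared-vertex edges never ``straddle'' in a way that would force a crossing with, or a cyclic ordering violation against, the structure.

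The key step is condition (iii) of Proposition~\ref{GY22}: with the common vertex called $x$, the edge appearing earlier in $w$ must go to a neighbor that is larger in the order $<_x$. Combined with non-crossing (condition (ii)), this limits the relative position of the three vertices. Concretely, I would set $x$ equal to the shared vertex and let $y,z$ be the two other endpoints, with the $x$-edge to $z$ preceding the $x$-edge to $y$, so that $z >_x y$, i.e.\ going clockwise from $x$ we meet $y$ before $z$ (strictly, $x <_x y <_x z$ after cyclic rotation so that $x$ is the minimum — but we must be careful that $x$ need not be the global minimum $1$). The cleanest route is: rotate labels cyclically so that the smallest of the three vertices becomes the reference point, then check the finitely many cases of which of $\{x,y,z\}$ is that smallest element, using in each case that (iii) forces $z >_x y$ and that the resulting two-edge configuration must be non-crossing. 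In each case one of the three named shapes emerges with $a<b<c$ the increasing rearrangement of $x,y,z$; a straddling configuration like $((a,c),(b,?))$ with $a<b<c$ that would be the fourth logical possibility is ruled out because the cyclic-decreasing-neighbor condition at vertex $a$ (or a crossing) excludes it.

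I would organize the write-up as a short case analysis on the position of the common vertex relative to $b$ and $c$: either the common vertex is the smallest, the middle, or the largest of the three integers involved, and within each case condition (iii) fixes which transposition comes first. Since the statement is symmetric in the roles of ``$t_i$ earlier'' and only asserts existence of $a<b<c$ realizing one of three patterns, this bookkeeping collapses quickly.

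The main obstacle I anticipate is purely notational: condition (iii) is phrased in terms of the rotated order $<_a$ anchored at the common vertex, whereas the conclusion is phrased in terms of the ordinary order $<$ on $\{a,b,c\}$, so the real work is translating ``$c >_x y$ at vertex $x$'' plus non-crossing into an unambiguous statement in ordinary $<$ for each of the three positions of $x$. Once that translation dictionary is set up, the verification that exactly the three listed shapes survive is immediate and essentially a picture on the circle; there is no substantive combinatorial difficulty beyond Proposition~\ref{GY22} itself.
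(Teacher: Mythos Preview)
Your approach is correct and matches the paper's: the corollary is stated there simply as an equivalent reformulation of Proposition~\ref{GY22}(iii), and your case analysis on whether the common vertex is the smallest, middle, or largest of the three integers is exactly the translation needed. One minor point: the non-crossing condition (ii) plays no role here, since two edges sharing a vertex can never cross on the circle; condition (iii) alone, applied at the common vertex, already forces one of the three listed patterns in each of the three cases, so you can drop that ingredient and the discussion of a ``fourth straddling configuration''.
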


\medskip


\begin{lemma}\label{phi.local-range}
For $w = (t_1, \ldots, t_{n-1}) \in F_n$ and $1 \le j \le n-1$,
if $t_j = (a,b)$ with $a < b$ then 
$i := \phi(w)(j)$ satisfies: $a \le i < b$. 
\end{lemma}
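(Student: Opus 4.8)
The plan is to pin down $i=\phi(w)(j)$ via Observation~\ref{obs.phi}, which says $i=\sigma_{j+1}^{-1}(a)$, equivalently $\sigma_{j+1}(i)=a$. So the goal becomes: if $t_j=(a,b)$ with $a<b$, then the unique index $i$ with $\sigma_{j+1}(i)=a$ lies in $[a,b)$. I would first record two boundary facts about the ``tail permutation'' $\sigma_{j+1}=t_{j+1}\cdots t_{n-1}$: since $\sigma_n=\mathrm{id}$ and each $t_k$ for $k>j$ is a transposition, $\sigma_{j+1}$ fixes every value outside the union of supports $\bigcup_{k>j}\{a_k,b_k\}$; and from the defining relation $\sigma_1=c$ together with $\sigma_j=t_j\sigma_{j+1}$, the element $a$ (being moved \emph{down} by $t_j$, since $a<b$) must satisfy $\sigma_{j+1}(i)=a>\sigma_{j+2}(i)$ is \emph{not} what we want — rather, I want to locate the preimage of $a$ under $\sigma_{j+1}$.

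The cleaner route is to use the Goulden--Yong structure (Proposition~\ref{GY22}) for the suffix word $w' := (t_{j+1},\ldots,t_{n-1})$, which is a sequence of $n-1-j$ non-crossing transpositions whose geometric graph $G(w')$ is a sub-forest of the non-crossing tree $G(w)$. The key step is to show that $\sigma_{j+1}(i)=a$ forces $i$ to be the $<_a$-minimal vertex of the connected component of $G(w')$ containing $a$, or more precisely the ``first return'' vertex; concretely I claim that running the transpositions $t_{n-1},t_{n-2},\ldots,t_{j+1}$ starting from position $i$, the orbit climbs monotonically in the order $<_a$ and lands on $a$, and by the cyclically-decreasing-neighbors condition (Corollary~\ref{t.avoid}) each step can only move within the arc $[a,c)$ for the relevant $c$, never wrapping past $a$ or exceeding the far endpoint $b$ of $t_j$'s own edge. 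The upper bound $i<b$ then follows because any vertex reachable from $a$ in $G(w')$ without using the edge $\{a,b\}$ (which is $t_j$, not in $w'$) lies strictly inside the arc $(a,b)$ in cyclic order — this is exactly non-crossingness of the tree $G(w)$ relative to the chord $\{a,b\}$. The lower bound $a\le i$ is the statement that this component sits in the arc $[a,b)$ rather than the complementary arc, which I would get from the cyclically-decreasing condition applied at the vertex $a$: the edge $t_j=(a,b)$ is the \emph{last} edge at $a$ among $t_1,\ldots,t_{n-1}$ to be $<_a$-larger than any later edge at $a$, forcing all of $a$'s ``suffix neighbors'' to lie in $(a,b)$.

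The main obstacle I anticipate is making the ``orbit climbs monotonically and stays in the arc $[a,b)$'' claim fully rigorous: one must track how $\sigma_{j+1}^{-1}$ is built up as a product and argue inductively over $k=n-1,n-2,\ldots,j+1$ that the partial preimage of $a$ never leaves the closed arc $[a,b)$, using at each stage either commutativity (the transposition $t_k$ is disjoint from the current arc and does nothing) or Corollary~\ref{t.avoid} to control the one non-commuting case. A subtle point is the cyclic wrap-around in the order $<_a$ when $a$ is large: the bound $i<b$ is a genuine linear inequality ($a\le i<b\le n$), so I must be careful that the non-crossing chord $\{a,b\}$ partitions $\{1,\ldots,n\}$ into the \emph{linear} interval $(a,b)$ and its complement, and that the relevant component falls on the interval side. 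Once the inductive invariant is set up correctly, each step is a short case check, so the real work is choosing the invariant; I would state it as: after multiplying by $t_{n-1},\ldots,t_{k}$, the preimage of $a$ lies in $[a,b)$ and is $\ge$ (in the order $<_a$) all vertices incident in $G((t_k,\ldots,t_{n-1}))$ to the current preimage.
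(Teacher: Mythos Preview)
Your approach via the Goulden--Yong tree structure can be made to work, but it is far more involved than necessary, and you have overlooked the one-line argument already sitting in the construction of $\phi$.

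Recall from the discussion preceding the definition of $\phi$ that for each fixed $i\in\{1,\ldots,n-1\}$ the sequence
\[
i=\sigma_n(i),\ \sigma_{n-1}(i),\ \ldots,\ \sigma_1(i)=i+1
\]
is weakly \emph{decreasing} in the ordinary linear order except for a \emph{single} increasing step; that step occurs precisely at the index $j$ with $A_j=\{i\}$, i.e.\ $\phi(w)(j)=i$, and there $\sigma_{j+1}(i)=a<\sigma_j(i)=b$ with $t_j=(a,b)$. Reading off the two monotone stretches gives the lemma immediately: from $\sigma_n(i)=i$ down to $\sigma_{j+1}(i)=a$ we get $i\ge a$, and from $\sigma_j(i)=b$ down to $\sigma_1(i)=i+1$ we get $b\ge i+1$. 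That is the paper's entire proof.

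Your plan instead analyses the suffix forest $G(w')$ and argues that the component of $a$ lies in $[a,b)$; this requires (i) the cyclically-decreasing-neighbours condition to force all suffix edges at $a$ into the open arc $(a,b)$, (ii) non-crossingness to keep the rest of the component inside $[a,b]$, (iii) acyclicity of $G(w)$ to exclude $b$ from that component, and (iv) the fact that $\sigma_{j+1}$ permutes each component of $G(w')$ so that $i=\sigma_{j+1}^{-1}(a)$ lies in it. All four steps are correct, but none is needed: the monotonicity is global and was established before $\phi$ was even defined. Note also that your ``orbit climbs monotonically in $<_a$'' is misdirected---the sequence $\sigma_n(i),\ldots,\sigma_{j+1}(i)$ \emph{descends} in the standard order from $i$ to $a$, which is exactly the content you want; the cyclic order $<_a$ only obscures this.
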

\begin{proof}
The discussion leading to the definition of $A_j$ and $\phi$ above shows that,
for any $1 \le i \le n-1$, the sequence $i = \sigma_n(i), \sigma_{n-1}(i), \ldots, \sigma_1(i) = i+1$
is weakly decreasing except for a single step $j$ for which,
if $a = \sigma_{j+1}(i) < \sigma_{j}(i) = b$, it follows that $t_j = (a,b)$ and $\phi(w)(j) = i$.
In particular, it follows that $i \ge a$ and $b \ge i+1$.

\end{proof}

\begin{lemma}\label{phi.local-range-2}
For $w = (t_1, \ldots, t_{n-1}) \in F_n$ and $j \ne k$,
if $t_j = (a,d)$ and $t_k = (b,c)$ with $a \le b < c \le d$ then 
$i := \phi(w)(j)$ satisfies:
\begin{itemize}
\item[(1)]
Either $a \le i < b$ or $c \le i < d$. 
\item[(2)]
Deleting the edge $\{a,d\}$ from the tree $G(w)$ leaves two connected
components, $T_a$ (containing $a$) and $T_d$ (containing $d$); 
only one of them contains the edge $\{b, c\}$. 
If this is $T_d$ then $a \le i < b$, otherwise $c \le i < d$.
\end{itemize}
\end{lemma}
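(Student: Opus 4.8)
The plan is to move the whole statement into $S_n$: delete the $j$-th transposition from the factorization $t_1\cdots t_{n-1}=c$, identify the resulting permutation explicitly, and read off the two components $T_a,T_d$ from its cycle structure.

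First I would set $i:=\phi(w)(j)=\sigma_{j+1}^{-1}(a)$ (Observation~\ref{obs.phi}), and introduce the companion index $i'':=\sigma_{j+1}^{-1}(d)$; note $i\neq i''$ since $a\neq d$, and by Lemma~\ref{phi.local-range} we already know $a\le i<d$ (this is the only fact about $i$ that will be used). The key step is the identity
$$
t_1\cdots t_{j-1}\,t_{j+1}\cdots t_{n-1}\;=\;c\cdot(i,i'').
$$
Indeed, writing $P:=t_1\cdots t_{j-1}$ and recalling $\sigma_{j+1}=t_{j+1}\cdots t_{n-1}$, we have $c=P\,t_j\,\sigma_{j+1}$, hence $P\sigma_{j+1}=(P\,t_j\,\sigma_{j+1})(\sigma_{j+1}^{-1}t_j\sigma_{j+1})=c\cdot(\sigma_{j+1}^{-1}(a),\sigma_{j+1}^{-1}(d))=c\cdot(i,i'')$, using $t_j=(a,d)$ and $t_jt_j=e$. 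On the other hand $P\sigma_{j+1}$ is the product of all of $t_1,\dots,t_{n-1}$ except $t_j$, i.e.\ of the edges of the forest $G(w)\setminus\{t_j\}$; since $G(w)$ is a tree (Proposition~\ref{GY22}) this forest has exactly the two connected components $T_a$ and $T_d$, and by the standard fact that a product of transpositions whose edge set is a forest has its nontrivial cycles supported exactly on the components, the two cycle-supports of $P\sigma_{j+1}$ are $T_a$ and $T_d$.

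It then remains to match this with $c\cdot(i,i'')$. Since $c=(1,2,\dots,n)$, a direct one-line computation shows that $c\cdot(i,i'')$ has two cycles, with supports the ``interval'' $\{\min(i,i'')+1,\dots,\max(i,i'')\}$ and its complement (mod $n$). Using $a\le i<d$ together with $a\in T_a$ and $d\in T_d$, a short case split on whether $i<i''$ or $i>i''$ forces in both cases
$$
T_a\cap\{a,a+1,\dots,d\}=\{a,a+1,\dots,i\},\qquad T_d\cap\{a,a+1,\dots,d\}=\{i+1,\dots,d\}.
$$
Finally, because $k\neq j$ the edge $\{b,c\}=t_k$ lies in exactly one of $T_a,T_d$, and $a\le b<c\le d$ puts $b,c\in\{a,\dots,d\}$. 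If $\{b,c\}\subseteq T_d$ then $b,c\in\{i+1,\dots,d\}$, so $b\ge i+1$ and $a\le i<b$; if $\{b,c\}\subseteq T_a$ then $b,c\in\{a,\dots,i\}$, so $c\le i$ and $c\le i<d$. This is exactly part~(2), and part~(1) is the immediate weakening.

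The step needing the most care is the bookkeeping around the identity: the conjugation manipulation itself is routine, but one must handle the cyclic arithmetic defining the two intervals and the degenerate possibilities ($i=a$, $d=i+1$, or $|T_a|=1$) when deciding which cycle of $c\cdot(i,i'')$ is $T_a$ and which is $T_d$. The ``forest $\Rightarrow$ cycles $=$ components'' fact should be cited or given a one-line induction. Everything else is bookkeeping.
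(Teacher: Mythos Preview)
Your argument is correct, and it takes a genuinely different route from the paper's. The paper works geometrically: it recalls from the proof of Lemma~\ref{phi.local-range} that the unique path in $G(w)$ from $i$ to $i+1$ uses the edge $\{a,d\}$ as its sole increasing step, then invokes the non-crossing property to force this path through $b$ and $c$ when $b\le i<c$, contradicting the tree property (two paths from $b$ to $c$). Part~(2) is then read off from the non-crossing structure of $T_d$ versus $T_a$.

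Your approach is purely algebraic: you remove $t_j$ from the factorization, identify the result as $c\cdot(i,i'')$, and use the D\'enes-type fact (product of tree transpositions is a full cycle on the vertex set) to match the two cycles of $c\cdot(i,i'')$ with $T_a,T_d$. This yields the sharper intermediate statement $T_a\cap[a,d]=[a,i]$ and $T_d\cap[a,d]=[i+1,d]$, from which both parts follow immediately. Notably, you never invoke the non-crossing property; only the tree axiom from Proposition~\ref{GY22}(i) and the bound $a\le i<d$ from Lemma~\ref{phi.local-range} are used. The paper's proof is shorter and more visual, but yours is more self-contained and in fact proves a slightly stronger structural fact about $T_a$ and $T_d$. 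Your caveats about the degenerate cases ($i=a$, $i''=i\pm1$, one component a singleton) are well placed; the case split on $i\lessgtr i''$ handles them, since in each case the condition $a\in T_a$, $d\in T_d$ pins down which cyclic interval is which.
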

\begin{proof}
By Lemma~\ref{phi.local-range}, $a \le i < d$.
By the proof of that Lemma, the directed path in $G(w)$ leading from $i$ to $i+1$
consists of a single increasing step (edge) $a \to d$, together with some decreasing edges. 
Assuming $b \le i < c$, this path cannot contain the step $c \to b$, and also not $b \to c$
(since $j \ne k$ implies $(a,d) \ne (b,c)$). However, since $G(w)$ is non-crossing
and $b \le i < i+1 \le c$, this path must pass through $b$ and $c$.
We thus get two distinct paths in $G(w)$ from $b$ to $c$, contradicting Proposition~\ref{GY22}(i).
Thus either $a \le i < b$ or $c \le i < d$.

The rest of the claim now follows from the observation that $i$ and $i+1$ belong to
distinct connected components of $G(w)$ minus the edge $\{a, d\}$. 
If $T_d$ contains the edge $\{b, c\}$ then, by the non-crossing property, 
it contains all the vertices from $b$ up to $d$, and therefore $a \le i < b$.
The complementary case leads, similarly, to the complementary conclusion.

\end{proof}

\begin{lemma}\label{t.inversions} {\rm (Characterization of inversions in $\phi(w) \in S_{n-1}$)}\\
For $w = (t_1, \ldots, t_{n-1}) \in F_n$ and $j < k$,
$\phi(w)(j) > \phi(w)(k)$ if and only if one of the following holds:
\begin{itemize}
\item[(1)]
$\exists a<b<c$ such that either:
   \begin{itemize}
   \item[(i)]
   $t_j = (a,c)$ and $t_k = (a,b)$; or:
   \item[(ii)]
   $t_j = (b,c)$ and $t_k = (a,c)$.
   \end{itemize}
\item[(2)] 
$\exists a<b<c<d$ such that $t_j = (c,d)$ and $t_k = (a,b)$. 
\item[(3)] 
$\exists a<b<c<d$ such that either:
   \begin{itemize}
   \item[(i)]
   $t_j = (a,d)$, $t_k = (b,c)$ and, 
   in the notation of Lemma~\ref{phi.local-range-2}, 
   the edge $\{b, c\}$ belongs to $T_a$; or
   \item[(ii)]
   $t_j = (b,c)$, $t_k = (a,d)$ and,
   in the notation of Lemma~\ref{phi.local-range-2}, 
   the edge $\{b, c\}$ belongs to $T_d$.
\end{itemize}
\end{itemize}
\end{lemma}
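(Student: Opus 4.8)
The plan is to analyze when $\phi(w)(j) > \phi(w)(k)$ for $j < k$ by exploiting the combinatorial structure of the tree $G(w)$ together with the local-range results of Lemmas~\ref{phi.local-range} and~\ref{phi.local-range-2}. Write $t_j = (a_j, b_j)$ and $t_k = (a_k, b_k)$ with $a_j < b_j$ and $a_k < b_k$, and set $i := \phi(w)(j)$, $i' := \phi(w)(k)$. By Lemma~\ref{phi.local-range} we have $a_j \le i < b_j$ and $a_k \le i' < b_k$. The case analysis should be organized according to the relative position of the two edges $\{a_j, b_j\}$ and $\{a_k, b_k\}$ in the non-crossing tree $G(w)$: either the intervals $[a_j, b_j]$ and $[a_k, b_k]$ are nested, or they are disjoint (possibly sharing an endpoint), or they cross --- but since $G(w)$ is non-crossing (Proposition~\ref{GY22}(ii)), genuine crossing is impossible unless the edges share a vertex. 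This reduces to a manageable number of cases, and I expect cases~(1), (2), (3) of the statement to correspond exactly to these geometric configurations.

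First I would dispose of the case where $t_j$ and $t_k$ commute. If the intervals $[a_j,b_j]$ and $[a_k,b_k]$ are disjoint as sets (no shared vertex), then the ranges $[a_j, b_j)$ and $[a_k, b_k)$ for $i$ and $i'$ are disjoint, so the inequality $i > i'$ holds precisely when $[a_j,b_j)$ lies to the right, i.e. $b_k \le a_j$; combined with disjointness this gives $a_k < b_k \le a_j < b_j$, and after checking that $b_k = a_j$ cannot occur for commuting transpositions in a tree (a shared vertex means the transpositions do not commute), one lands in case~(2) with $(a,b,c,d) = (a_k, b_k, a_j, b_j)$. Conversely if $b_j \le a_k$ then $i < b_j \le a_k \le i'$, so no inversion. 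If the edges share a vertex, then by Corollary~\ref{t.avoid} they do not commute, so we are in the non-commuting regime.

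Next, the non-commuting cases. By Corollary~\ref{t.avoid}, since $j < k$ and $t_j, t_k$ do not commute, the pair $(t_j, t_k)$ is one of $((a,c),(a,b))$, $((b,c),(a,c))$, or $((a,b),(b,c))$ for some $a < b < c$. For the first two, I would use Observation~\ref{obs.phi} together with a direct computation of $\sigma_{j+1}^{-1}$ versus $\sigma_{k+1}^{-1}$ --- or, more cleanly, the path-in-the-tree description from the proof of Lemma~\ref{phi.local-range}: $\phi(w)(j)$ is the unique $i$ in $[a_j, b_j)$ whose ``successor edge'' is $\{a_j, b_j\}$, and similarly for $k$. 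For $(t_j, t_k) = ((a,c),(a,b))$ the range for $i$ is $[a,c)$ and for $i'$ is $[a,b)$ with $b < c$; one checks $i \ge b > i'$ using that the cyclically-decreasing-neighbors condition~(iii) forces the edge $\{a,b\}$ (appearing later) to carry the smaller vertices --- this gives case~(1)(i). The case $((b,c),(a,c))$ is symmetric and gives~(1)(ii). The case $((a,b),(b,c))$ should turn out to produce \emph{no} inversion, since there $i \in [a,b)$ and $i' \in [b,c)$ so $i < b \le i'$; this is consistent with the statement, which lists no such case. Finally, the nested non-crossing configuration $a \le b < c \le d$ with $t_j = (a,d)$, $t_k = (b,c)$ (or vice versa) and the edges not sharing their structure trivially is exactly case~(3): here Lemma~\ref{phi.local-range-2}(2) pins down $i \in [a,b)$ or $i \in [c,d)$ according to which component $T_a$ or $T_d$ contains $\{b,c\}$, and comparing against $i' \in [b,c)$ yields the inversion precisely in the two listed sub-cases.

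The main obstacle, I expect, is bookkeeping: making sure the case division (disjoint / nested / sharing a vertex, crossed with $t_j$ on the left versus $t_k$ on the left) is genuinely exhaustive and that each case maps to exactly one of~(1), (2), (3) with no overlap and no omission. In particular one must be careful about boundary situations where intervals share exactly one endpoint, and must repeatedly invoke Proposition~\ref{GY22}(i)--(iii) to rule out configurations that look combinatorially possible for arbitrary transposition words but cannot occur inside $F_n$. The actual inequalities, once the configuration is fixed, follow quickly from Observation~\ref{obs.phi} and the path description, so the proof is essentially a careful enumeration rather than a computation.
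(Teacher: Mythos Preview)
Your approach is essentially the one the paper takes: split into the cases where $t_j$ and $t_k$ commute or not (equivalently, your disjoint/nested versus shared-vertex dichotomy), invoke Corollary~\ref{t.avoid} to list the three non-commuting configurations and rule out $((a,b),(b,c))$ via Lemma~\ref{phi.local-range}, and handle the commuting configurations with Lemmas~\ref{phi.local-range} and~\ref{phi.local-range-2}. One small point: in case~(1)(i) your justification that $i \ge b$ via condition~(iii) is vague --- the clean way is to apply Lemma~\ref{phi.local-range-2} with $(a',b',c',d') = (a,a,b,c)$, which forces $i \in [b,c)$ since $[a,a) = \emptyset$; but this is a detail, not a gap.
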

\begin{proof} 
Assume that $j < k$ and $\phi(w)(j) > \phi(w)(k)$.
If $t_j$ and $t_k$ do not commute then, by Corollary~\ref{t.avoid},
there are 3 options for the pair $(t_j, t_k)$.
Two of them appear in $(1)$ above while the third, $(t_j, t_k) = ((a,b), (b,c))$,
is ruled out since then, by Lemma~\ref{phi.local-range}, $\phi(w)(j) < b \le \phi(w)(k)$.
If $t_j$ and $t_k$ commute then there are 4 options for the pair $(t_j, t_k)$:
$((a,b), (c,d))$, $((c,d), (a,b))$, $((a,d), (b,c))$ and $((b,c), (a,d))$ (for $a < b < c < d$).
The first option is ruled out by Lemma~\ref{phi.local-range}, 
the second appears in $(2)$ above,
and the third and fourth appear in $(3)$ above in the restricted versions allowed by
Lemmas~\ref{phi.local-range} and~\ref{phi.local-range-2}.
 
Conversely, if $j < k$ and one of the options $(1)$, $(2)$ and $(3)$ holds, 
then Lemmas~\ref{phi.local-range} and~\ref{phi.local-range-2}
show that indeed $\phi(w)(j) > \phi(w)(k)$.

\end{proof}


Recall the operators $R_j$ and $L_j$ from Section~\ref{section:basic_concepts}.
By definition~\ref{d.sigma}, 
for all $1 \le j \le n-2$ and $1 \le k \le n$,
\[
\sigma_k(R_j(w)) =
\begin{cases}
t_j t_{j+1} \sigma_k(w), &\text{\rm if } k = j+1; \\
\sigma_k(w), &\text{\rm otherwise}
\end{cases}
\]
and
\[
\sigma_k(L_j(w)) =
\begin{cases}
t_{j+1} t_j \sigma_k(w), &\text{\rm if } k = j+1; \\
\sigma_k(w), &\text{\rm otherwise}.
\end{cases}
\]

\begin{lemma}\label{t.main-lemma}
For every $w\in F_n$ and $1\le j \le n-2$,
\[
\phi(R_j(w)) =
\begin{cases}
\phi(w), &\text{\rm if\ } \exists\, a<b<c \text{\rm\ \ s.t.\ } t_j=(a,c) \text{\rm\ and\ } t_{j+1}=(a,b) ;\\
\phi(w) s_j, &\text{\rm otherwise}
\end{cases}
\]
and
\[
\phi (L_j(w))=
\begin{cases}
\phi(w), &\text{\rm if\ } \exists\, a<b<c \text{\rm\ \ s.t.\ } t_j=(b,c) \text{\rm\ and\ }  t_{j+1}= (a,c) ;\\
\phi(w) s_j, &\text{\rm otherwise.}
\end{cases}
\]
\end{lemma}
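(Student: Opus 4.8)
The plan is to show first that $\phi(R_j(w))$ and $\phi(w)$ can disagree only in the two positions $j$ and $j+1$, and then to determine exactly when they disagree. Since $R_j$ maps $F_n$ to itself, $R_j(w)\in F_n$, and by the displayed identities just before the lemma $\sigma_k(R_j(w))=\sigma_k(w)$ for every $k\ne j+1$. The set $A_k$ (and hence the value $\phi(\cdot)(k)$) depends only on the pair $(\sigma_k,\sigma_{k+1})$; for $k\le j-1$ both of these are unchanged (note that $\sigma_j$ is among the unchanged ones), and for $k\ge j+2$ both are unchanged as well, so $\phi(R_j(w))(k)=\phi(w)(k)$ for all $k\notin\{j,j+1\}$. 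As $\phi(w)$ and $\phi(R_j(w))$ are permutations in $S_{n-1}$ that agree off $\{j,j+1\}$, they carry the same two-element set of values on $\{j,j+1\}$, so either $\phi(R_j(w))=\phi(w)$ or $\phi(R_j(w))=\phi(w)s_j$.

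To decide between the two, I would compute $\phi(R_j(w))(j+1)$ and compare it with $\phi(w)(j)$. The $(j+1)$st letter of $R_j(w)$ is $t_j$ itself; writing $t_j=(a,b)$ with $a<b$, Observation~\ref{obs.phi} applied to $R_j(w)$ together with $\sigma_{j+2}(R_j(w))=\sigma_{j+2}(w)$ gives $\phi(R_j(w))(j+1)=\sigma_{j+2}(w)^{-1}(a)$, while Observation~\ref{obs.phi} applied to $w$ together with $\sigma_{j+1}(w)=t_{j+1}\sigma_{j+2}(w)$ gives $\phi(w)(j)=\sigma_{j+1}(w)^{-1}(a)=\sigma_{j+2}(w)^{-1}(t_{j+1}(a))$. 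Hence $\phi(R_j(w))(j+1)=\phi(w)(j)$ if and only if $t_{j+1}$ fixes $a=\min t_j$, i.e. $\min t_j\notin\Sp(t_{j+1})$; and since $\phi(w)(j)\ne\phi(w)(j+1)$, the alternative $\phi(R_j(w))(j+1)\ne\phi(w)(j)$ is equivalent to $\phi(R_j(w))=\phi(w)$. Therefore $\phi(R_j(w))=\phi(w)$ if and only if $\min t_j\in\Sp(t_{j+1})$.

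It remains to recognize this last condition. If $\min t_j\in\Sp(t_{j+1})$ then $t_j$ and $t_{j+1}$ share a point and, being distinct (the factorization is reduced), do not commute; by Corollary~\ref{t.avoid} the pair $(t_j,t_{j+1})$ is one of $((a,c),(a,b))$, $((b,c),(a,c))$, $((a,b),(b,c))$ with $a<b<c$, and a one-line check of which entry of $t_j$ is its minimum in each case shows that $\min t_j\in\Sp(t_{j+1})$ holds only in the first; the converse is immediate. This proves the formula for $\phi(R_j(w))$. The formula for $\phi(L_j(w))$ is obtained by the identical argument, now comparing $\phi(L_j(w))(j)$ with $\phi(w)(j+1)$: the $j$th letter of $L_j(w)$ is $t_{j+1}$, and using $\sigma_{j+1}(L_j(w))=t_{j+1}t_j\sigma_{j+1}(w)$ one finds $\phi(L_j(w))=\phi(w)$ if and only if $\max t_{j+1}\in\Sp(t_j)$, which by Corollary~\ref{t.avoid} happens exactly when $(t_j,t_{j+1})=((b,c),(a,c))$ with $a<b<c$.

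The step I expect to be the main obstacle is the middle computation: keeping the shifted partial products, the inverses, and --- crucially --- the left/right convention straight, so that ``swap the values at positions $j$ and $j+1$'' is genuinely realized as right multiplication by $s_j$ on $\phi(w)$. Once the single clean equivalence ``$\phi(R_j(w))\ne\phi(w)\iff\min t_j\notin\Sp(t_{j+1})$'' (and its $L$-analogue) is in place, matching it against the three forms of Corollary~\ref{t.avoid} is routine.
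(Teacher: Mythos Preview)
Your argument is correct and follows essentially the same route as the paper: first use the invariance of $\sigma_k$ for $k\ne j+1$ to conclude that $\phi(R_j(w))$ and $\phi(w)$ agree off $\{j,j+1\}$, then examine position $j+1$ via Observation~\ref{obs.phi}, and finally invoke Corollary~\ref{t.avoid} to pin down the exceptional pattern. The only cosmetic difference is that the paper compares $\phi(R_j(w))(j+1)$ with $\phi(w)(j+1)$ (obtaining the criterion $\min t_j=\min t_{j+1}$), whereas you compare $\phi(R_j(w))(j+1)$ with $\phi(w)(j)$ (obtaining $\min t_j\in\Sp(t_{j+1})$); these are equivalent once Corollary~\ref{t.avoid} is applied, and your version is arguably more transparent since it separates the bijection argument from the pattern-matching step.
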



\begin{proof}
We prove the formula for $R_j$; 
the proof of the other formula 
is similar.

Since $t_k(R_j(w)) = t_k(w)$ for $k \ne j, j+1$ 
and $\sigma_k(R_j(w)) = \sigma_k(w)$ for $k \ne j+1$
it follows, by Observation~\ref{obs.phi}, that $\phi(R_j(w))(k)= \phi(w)(k)$
for all $k \ne j, j+1$.
Thus either $\phi(R_j(w))=\phi(w)$ or $\phi(R_j(w))=\phi(w) s_j$.
By Observation~\ref{obs.phi} (for $j+1$), $\phi(R_j(w))=\phi(w)$ if and only if
$t_{j+1}=(a,b)$ and $t_j=(a,c)$ for some $a$, $b$ and $c$ such that $a<b$ and $a<c$.
These $t_j$ and $t_{j+1}$ do not commute, and by Corollary~\ref{t.avoid} necessarily $b<c$ as well.

\end{proof}

\section{Properties of the weak order on $F_n$}
\label{section:order}

Recall the graded poset $\W(F_n)$ from
Section~\ref{section:basic_concepts}.
Note that, by definition,

\begin{fact}\label{t.minimum}
The word $e=((1,2),(2,3),\ldots, (n-1,n))$ is the unique minimal element in
$\W(F_n)$.
\end{fact}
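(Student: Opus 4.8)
The plan is to show that $e$ is in fact the \emph{minimum} of $\W(F_n)$, not merely a minimal element; since the minimum of a poset is automatically its unique minimal element, this suffices. Fix an arbitrary $v \in F_n$. Because the Hurwitz graph $G_T(n)$ is connected (Bessis's transitivity result, \cite[Prop.~1.6.1]{Bessis}, recalled above), there is a geodesic in $G_T(n)$ from $e$ to $v$. The vertex $e$ is the initial vertex of that geodesic, hence lies on it, so $e \le v$ by Definition~\ref{d.order}. As $v$ was arbitrary, $e \le v$ for every $v \in F_n$.

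It then remains only to invoke that $\W(F_n)$ is a genuine partial order --- this is implicit in the word ``poset'' in Definition~\ref{d.order}, and is in any case routine: reflexivity, transitivity and antisymmetry of $\le$ all follow from the distance additivity encoded in ``$u$ lies on a geodesic from $e$ to $v$'', namely $d(e,u)+d(u,v)=d(e,v)$, together with the triangle inequality in $G_T(n)$. In any poset possessing a minimum, that minimum is the unique minimal element: if $m$ were a minimal element with $m \ne e$, then $e \le m$ would force $m = e$. Hence $e$ is the unique minimal element of $\W(F_n)$, as claimed.

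There is essentially no obstacle here; the statement is immediate from Definition~\ref{d.order}. The only ingredient that is not purely formal is the existence of a geodesic from $e$ to $v$, which is exactly why the connectivity of the Hurwitz graph $G_T(n)$ must be invoked.
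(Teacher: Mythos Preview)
Your argument is correct and is exactly the expansion of what the paper has in mind: the paper does not give a proof at all, but simply writes ``Note that, by definition,'' before stating the fact. Your proposal spells out precisely why it is immediate from Definition~\ref{d.order}, correctly noting that connectivity of $G_T(n)$ (Bessis) is the only nontrivial ingredient.
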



\begin{lemma}\label{t.unique_id}
$$
\phi^{-1}(id) = \{e\}.
$$
\end{lemma}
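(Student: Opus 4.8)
The goal is to show that the only word $w \in F_n$ with $\phi(w) = id$ is $w = e = ((1,2),(2,3),\ldots,(n-1,n))$. First I would verify the easy direction: a direct computation of the partial products $\sigma_j$ for $w = e$ gives $\sigma_j = (j, j+1, \ldots, n)$ (in cycle notation), so $t_j = (j, j+1)$ and, by Observation~\ref{obs.phi}, $\phi(e)(j) = \sigma_{j+1}^{-1}(j) = j$; hence $\phi(e) = id$ and $e \in \phi^{-1}(id)$.

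For the reverse inclusion, suppose $\phi(w) = id$, i.e.\ $\phi(w)(j) = j$ for all $j$. The plan is to show that each $t_j$ must equal $(j, j+1)$. By Lemma~\ref{phi.local-range}, if $t_j = (a,b)$ with $a < b$ then $a \le \phi(w)(j) < b$, i.e.\ $a \le j < b$, so $a \le j$ and $b \ge j+1$. The remaining task is to rule out the possibility that $a < j$ or $b > j+1$, i.e.\ that the edge $\{a,b\}$ of the tree $G(w)$ strictly ``straddles'' the position $j$. I would argue this using the inversion characterization of Lemma~\ref{t.inversions}: since $\phi(w)$ is the identity it has no inversions, so none of the configurations (1), (2), (3) in that lemma can occur for any pair $j < k$.

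The cleanest route is probably a counting/extremality argument rather than case analysis. Consider, for a fixed value $i \in \{1, \ldots, n-1\}$, the unique position $j = \phi(w)^{-1}(i)$ with $\phi(w)(j) = i$; since $\phi(w) = id$ this means $j = i$. By Lemma~\ref{phi.local-range} applied at position $j=i$, writing $t_i = (a_i, b_i)$ with $a_i < b_i$, we have $a_i \le i < b_i$. Summing the two inequalities over $i = 1, \ldots, n-1$ and using that the edges $\{a_i, b_i\}$ are the $n-1$ edges of a tree on $\{1,\ldots,n\}$: each $\sum (b_i - a_i)$ is the total ``edge length'' of the tree $G(w)$ drawn on the circle (with vertices $1, \ldots, n$), and this is minimized exactly when every edge has length $1$, i.e.\ when $G(w)$ is the path $1 - 2 - \cdots - n$. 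On the other hand $a_i \le i < b_i$ forces $b_i - a_i \ge 1$ for each $i$, with equality for all $i$ iff $a_i = i$ and $b_i = i+1$ for all $i$. Since $a_i \le i$ and $b_i \ge i+1$ already give $\sum_i (b_i - a_i) \ge \sum_i 1 = n-1$, and the total edge length of any spanning tree on the circle is at least $n-1$ with equality only for the path, the constraints can only be simultaneously satisfied by $t_i = (i, i+1)$ for every $i$. Hence $w = e$.

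\textbf{Main obstacle.} The delicate point is making the ``edge-length'' argument airtight: one must be careful that $\sum_i (b_i - a_i) \ge n-1$ already follows coordinate-wise from Lemma~\ref{phi.local-range} (this is immediate, since $b_i - a_i \ge (i+1) - i = 1$), so the tree structure is not even strictly needed — equality $b_i - a_i = 1$ for \emph{every} $i$ is forced, and $a_i \le i < b_i = a_i + 1$ then pins down $a_i = i$. So in fact Lemma~\ref{phi.local-range} alone suffices, applied at each position, and no appeal to Lemmas~\ref{phi.local-range-2} or~\ref{t.inversions} is required; the only care needed is the bookkeeping that $\phi(w) = id$ means the position realizing value $i$ is exactly $i$. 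I would present it in that streamlined form.
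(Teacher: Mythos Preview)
Your forward direction is correct and matches the paper. The reverse direction, however, has a genuine gap. From Lemma~\ref{phi.local-range} you correctly deduce $a_j \le j < b_j$ for each $j$, hence $b_j - a_j \ge 1$. You then assert that ``equality $b_i - a_i = 1$ for every $i$ is forced,'' but this is not justified: both bounds you cite --- the coordinate-wise $b_i - a_i \ge 1$ and the spanning-tree bound $\sum_i (b_i - a_i) \ge n-1$ --- are \emph{lower} bounds on the total edge length, so there is no squeeze. Nothing you have written rules out, say, $t_1 = (1,n)$, which satisfies $a_1 \le 1 < b_1$ with $b_1 - a_1 = n-1$. Lemma~\ref{phi.local-range} alone, applied coordinate by coordinate, does not pin down $a_j$ and $b_j$.

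The paper's proof sidesteps this by using the definition of $\phi$ directly and inducting on $n$: from $\phi(w)(n-1) = n-1$ one gets $\sigma_{n-1}(n-1) > \sigma_n(n-1) = n-1$, so $\sigma_{n-1}(n-1) = n$ and $t_{n-1} = (n-1,n)$; then $t_1 \cdots t_{n-2} = (1,\ldots,n-1)$, and the induction hypothesis applies to $(t_1,\ldots,t_{n-2}) \in F_{n-1}$. Your route via Lemma~\ref{phi.local-range} \emph{can} be completed, but it too requires a genuine argument: since $b_{n-1} = n$, write $t_{n-1} = (a,n)$; the remaining factors then split between the disjoint cycles $(1,\ldots,a)$ and $(a+1,\ldots,n)$. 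A factor in the first cycle has $b_j \le a$, forcing position $j \le a-1$; one in the second has $a_j \ge a+1$, forcing $j \ge a+1$. If $a \le n-2$ this leaves position $j = a$ with no legal factor, so $a = n-1$ and one inducts. Either way, an inductive step is unavoidable; the global edge-length count does not do the work.
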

\begin{proof}
Clearly $\phi(e) = id$, since for $e = ((1,2),\ldots,(n-1,n))$
one has $t_j = (j,j+1)$, $\sigma_j = t_j \cdots t_{n-1} = (j,\ldots,n)$
and thus $\phi(e)(j) = \sigma_{j+1}^{-1}(j) = j$ by Observation~\ref{obs.phi}.

Conversely, assume that $w = (t_1,\ldots,t_{n-1})\in F_n$ has $\phi(w) = id$.
Then $\phi(w)(n-1) = n-1$ implies that $\sigma_{n-1}(n-1) > \sigma_n(n-1) = n-1$,
so that $\sigma_{n-1}(n-1) = n$ and $t_{n-1} = \sigma_{n-1} = (n-1,n)$.
It follows that $t_1 \cdots t_{n-2} = (1,\ldots,n)(n-1,n) = (1,\ldots,n-1)$,
so that necessarily $t_1,\ldots,t_{n-2}\in S_{n-1}$ and
$w' := (t_1,\ldots,t_{n-2})$ belongs to $F_{n-1}$, with
$\phi(w') = id\in S_{n-2}$. Induction on $n$ completes the proof.

\end{proof}

For any $w \in F_n$, let $\rank(w)$ be its rank in the poset $\W(F_n)$.
For $\pi \in S_{n-1}$, let $\inv(\pi)$ be its inversion number.
\begin{lemma}\label{t.rank_eq_inv}
For any $w\in F_n$,
$$
\rank(w) = \inv(\phi(w)).
$$
\end{lemma}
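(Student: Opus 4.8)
The plan is to establish the equality by showing that both sides satisfy the same recursion with respect to covering relations in $\W(F_n)$, using Lemma~\ref{t.main-lemma} as the bridge between the two. Recall that $\rank(w)$ counts the number of steps in a geodesic in $G_T(n)$ from $e$ to $w$, and that the covering relations in $\W(F_n)$ are given by those applications of $R_j$ or $L_j$ that strictly increase rank. First I would observe that, by Lemma~\ref{t.main-lemma}, each operator $R_j$ or $L_j$ either fixes $\phi(w)$ or multiplies it on the right by $s_j$; the latter changes $\inv(\phi(w))$ by exactly $\pm 1$, while the former leaves it unchanged. So along any geodesic $e = w_0, w_1, \ldots, w_m = w$ in $G_T(n)$, the quantity $\inv(\phi(w_i))$ changes by at most $1$ at each step, and since $\inv(\phi(e)) = \inv(id) = 0$ by Lemma~\ref{t.unique_id} (or rather its easy direction), we immediately get the lower bound $\inv(\phi(w)) \le \rank(w)$ — wait, more carefully, we get $\inv(\phi(w)) \le m$ for the length $m$ of \emph{any} path, hence $\inv(\phi(w)) \le \rank(w)$.

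For the reverse inequality I would construct, for each $w \in F_n$, a path from $e$ to $w$ in $G_T(n)$ of length exactly $\inv(\phi(w))$ along which $\inv \circ \phi$ strictly increases at every step; this shows $\rank(w) \le \inv(\phi(w))$ and simultaneously that this path is a geodesic. The natural approach is downward induction on $\inv(\phi(w))$: if $\phi(w) = id$ then $w = e$ by Lemma~\ref{t.unique_id} and there is nothing to prove; otherwise $\phi(w)$ has a descent, i.e. there is some $j$ with $\phi(w)(j) > \phi(w)(j+1)$, equivalently $\inv(\phi(w)s_j) = \inv(\phi(w)) - 1$. The key point I must verify is that for such a $j$ there is a choice of $R_j$ or $L_j$ taking $w$ to some $w'$ with $\phi(w') = \phi(w) s_j$. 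By Lemma~\ref{t.main-lemma}, $R_j(w)$ realizes $\phi(w) s_j$ unless $t_j = (a,c)$, $t_{j+1} = (a,b)$ with $a<b<c$, and $L_j(w)$ realizes $\phi(w)s_j$ unless $t_j = (b,c)$, $t_{j+1} = (a,c)$ with $a<b<c$. The obstruction is precisely the case where \emph{both} bad configurations occur — but $t_j$ cannot simultaneously be $(a,c)$ and $(b,c)$, so at least one of $R_j, L_j$ always works. Thus $w' = R_j(w)$ or $w' = L_j(w)$ has $\phi(w') = \phi(w)s_j$ with $\inv(\phi(w')) = \inv(\phi(w)) - 1$, and by induction $w'$ is connected to $e$ by a geodesic of length $\inv(\phi(w')) = \inv(\phi(w)) - 1$ along which $\inv\circ\phi$ increases; prepending the edge $\{w', w\}$ gives the desired path to $w$.

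Combining the two inequalities yields $\rank(w) = \inv(\phi(w))$. The main obstacle I anticipate is not any single step but making the bookkeeping of Definition~\ref{d.order} precise: one must be careful that "geodesic from $e$ to $w$" and "rank in the graded poset" genuinely coincide, i.e. that $\W(F_n)$ is graded with $\rank$ equal to graph-distance from $e$ — this is implicitly built into the setup but the proof should at least note that the strictly-increasing path constructed above, having length equal to the general lower bound $\inv(\phi(w))$ on path length, is automatically a shortest path, so every step along it is a covering relation and the distance from $e$ is exactly $\inv(\phi(w))$. A secondary point worth a sentence: when $R_j(w) = w'$ with $\phi(w') = \phi(w)s_j$, this is indeed an \emph{edge} of $G_T(n)$ by the Observation following Proposition~\ref{t.Arnold}, so the path we build is a genuine walk in the graph.
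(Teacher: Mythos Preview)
Your proposal is correct and follows essentially the same approach as the paper: both use Lemma~\ref{t.main-lemma} to see that $\inv(\phi(\cdot))$ changes by at most $1$ along each edge (giving $\inv(\phi(w))\le\rank(w)$), and both use induction on $\inv(\phi(w))$ together with Lemma~\ref{t.unique_id} and the observation that the two exceptional configurations for $R_j$ and $L_j$ cannot occur simultaneously to produce a neighbor $w'$ with $\inv(\phi(w'))=\inv(\phi(w))-1$ (giving the reverse inequality). The only difference is organizational: you separate the two inequalities up front, whereas the paper handles the lower bound inside the induction step.
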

\begin{proof}
By induction on $\inv(\phi(w))$.

If $\inv(\phi(w)) = 0$ then $\phi(w) = id$. By Lemma~\ref{t.unique_id} $w = e$,
so that indeed $\rank(w) = 0 = \inv(\phi(w))$.

For the induction step let $\inv(\phi(w)) = \ell > 0$, and assume that 
the claim holds for $u\in F_n$ whenever $\inv(\phi(u)) < \ell$. 
Let $\pi := \phi(w)\in S_{n-1}$. Since $\inv(\pi) > 0$
there exists $1\le j\le n-2$ such that $\inv(\pi s_j) < \inv(\pi)$. 
By Lemma~\ref{t.main-lemma}, $\phi(R_j(w)) = \pi s_j$
unless $\exists\, a<b<c$ such that $t_j=(a,c)$ and $t_{j+1}=(a,b)$ (in $w$); 
whereas $\phi(L_j(w)) = \pi s_j$ unless $\exists\, a<b<c$ such that 
$t_j=(b,c)$ and $t_{j+1}=(a,c)$. These two exceptions cannot hold simultaneously, 
and thus $\phi(u) = \pi s_j$ for either $u = R_j(w)$ or $u = L_j(w)$. 
For this $u$, $\inv(\phi(u)) = \inv(\pi s_j) = \ell-1$, so by the induction
hypothesis $\rank(u) = \inv(\phi(u))$. 
Since $u$ and $w$ are connected by an arc in the graph $G_T(n)$, it follows that
$\rank(w) \le \rank(u) + 1 = \ell$. 
On the other hand, in a geodesic from $e$ to $w$ in $G_T(n)$ the value of
$\inv(\phi(\cdot))$ changes by $0$ or $1$ on each arc (again by Lemma~\ref{t.main-lemma}), 
so that $\rank(w) \ge \inv(\phi(w)) = \ell$. Thus $\rank(w) = \ell = \inv(\phi(w))$.

\end{proof}

\begin{corollary}\label{t.Hasse}
The (undirected) Hasse diagram of $\W(F_n)$ is obtained from the graph $G_T(n)$
by deleting all edges connecting a vertex $(\ldots,(a,c),(a,b),\ldots)$ with a vertex
$(\ldots,(b,c),(a,c),\ldots)$ for some $a<b<c$.
\end{corollary}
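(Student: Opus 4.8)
The plan is to read off Corollary~\ref{t.Hasse} directly from Lemma~\ref{t.main-lemma} together with the rank formula of Lemma~\ref{t.rank_eq_inv}. The key point is that an edge $\{u,v\}$ of $G_T(n)$ is an edge of the (undirected) Hasse diagram of $\W(F_n)$ precisely when $\rank$ changes by exactly $1$ along it; since $\W(F_n)$ is graded (by Lemma~\ref{t.rank_eq_inv}, $\rank$ takes values in $\{0,1,\ldots\}$ and $e$ is the unique minimum by Fact~\ref{t.minimum} and Lemma~\ref{t.unique_id}), and adjacent vertices in $G_T(n)$ have $\rank$ differing by $0$ or $1$ (again by Lemma~\ref{t.main-lemma}, which forces $\phi$ to change by right-multiplication by $s_j$ or not at all), the Hasse diagram is obtained from $G_T(n)$ by deleting exactly those edges along which $\rank$ is constant.

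Concretely, I would argue as follows. By the Observation following Proposition~\ref{t.Arnold}, every edge of $G_T(n)$ has the form $\{u, R_j(u)\}$ for some $1\le j\le n-2$ (the $L_j$ case being the same edge read from the other vertex, since $L_j = R_j^{-1}$ up to relabeling endpoints). By Lemma~\ref{t.main-lemma} applied at $u$ and, symmetrically, at $v=R_j(u)$ (where $R_j$ becomes $L_j$ going back), we have $\phi(v) = \phi(u)$ if and only if $u = (\ldots,(a,c),(a,b),\ldots)$ with $a<b<c$ in positions $j, j+1$, equivalently $v = (\ldots,(b,c),(a,c),\ldots)$; otherwise $\phi(v) = \phi(u)s_j \ne \phi(u)$. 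By Lemma~\ref{t.rank_eq_inv}, $\rank(u) = \inv(\phi(u))$ and $\rank(v) = \inv(\phi(v))$, so in the first case $\rank(u) = \rank(v)$ and the edge is not a covering edge, while in the second case $|\rank(u) - \rank(v)| = |\inv(\phi(u)) - \inv(\phi(u)s_j)| = 1$, so the edge joins two elements of consecutive rank.

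It remains only to check that every such consecutive-rank edge of $G_T(n)$ is indeed a covering relation of $\W(F_n)$, i.e.\ that no element strictly between them exists in the weak order. But this is immediate from gradedness: if $\rank(v) = \rank(u)+1$ and $u$ lies on a geodesic from $e$ to $v$ in $G_T(n)$ (equivalently $u \le v$ in $\W(F_n)$), then any $z$ with $u \le z \le v$ has $\rank(u) \le \rank(z) \le \rank(v) = \rank(u)+1$, so $z \in \{u,v\}$; and since the edge $\{u,v\}$ realizes $|\rank(u)-\rank(v)|=1$, one of $u\le v$ or $v\le u$ holds by Definition~\ref{d.order} (a geodesic from $e$ passing through the lower-rank one can be extended by this arc). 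Hence the edges of the Hasse diagram are exactly the edges of $G_T(n)$ other than the deleted ones described in the statement.

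I do not anticipate a genuine obstacle here — the statement is essentially a corollary bookkeeping exercise. The one point requiring a little care is the direction ``consecutive-rank edge $\Rightarrow$ covering edge of $\W(F_n)$'': one must make sure that an edge $\{u,v\}$ of $G_T(n)$ with $\rank(v)=\rank(u)+1$ is actually a comparability in the weak order (Definition~\ref{d.order}), not just a rank-adjacent pair. This follows because, along a geodesic from $e$ to the lower vertex $u$, appending the arc $\{u,v\}$ yields a path of length $\rank(u)+1 = \rank(v)$, which by Lemma~\ref{t.rank_eq_inv} (rank = distance in $G_T(n)$ from $e$) is a geodesic from $e$ to $v$; hence $u \le v$. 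I would state this half-sentence explicitly and otherwise keep the proof to the three short paragraphs above.
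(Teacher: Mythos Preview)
Your proposal is correct and follows exactly the route the paper indicates: the paper's entire proof is the single line ``By Lemma~\ref{t.main-lemma} and Lemma~\ref{t.rank_eq_inv}'', and what you have written is a faithful unpacking of that reference. Your explicit check that a rank-$1$ edge of $G_T(n)$ is actually a covering relation (by appending it to a geodesic from $e$) is the only nontrivial point the paper leaves to the reader, and you have handled it correctly.
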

\begin{proof}
By Lemma~\ref{t.main-lemma} and Lemma~\ref{t.rank_eq_inv}.
\end{proof}


%
%

\begin{lemma}\label{t.forbidden-wedge}
For every $w \in F_n$ and $1 \le j \le n-2$,
if $R_j(w) < w$ and $L_j(w) < w$ in $\W(F_n)$ then $R_j(w) = L_j(w)$.
\end{lemma}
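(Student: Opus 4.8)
The plan is to analyze the three-element window $(t_{j-1}, t_j, t_{j+1})$ — or more precisely just the pair $(t_j, t_{j+1})$ — and use the rank formula of Lemma~\ref{t.rank_eq_inv} to control which of $R_j(w)$ and $L_j(w)$ drops in rank. By Lemma~\ref{t.main-lemma}, writing $\pi := \phi(w)$, we have $\phi(R_j(w)) \in \{\pi, \pi s_j\}$ and $\phi(L_j(w)) \in \{\pi, \pi s_j\}$, with $\phi(R_j(w)) = \pi$ exactly when $(t_j, t_{j+1}) = ((a,c),(a,b))$ for some $a<b<c$, and $\phi(L_j(w)) = \pi$ exactly when $(t_j, t_{j+1}) = ((b,c),(a,c))$ for some $a<b<c$. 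First I would observe that these two exceptional configurations are mutually exclusive (they force different forms on $t_j$), so at most one of $R_j(w)$, $L_j(w)$ has $\phi$-value equal to $\pi$.

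Next I would split into cases according to whether $t_j$ and $t_{j+1}$ commute. If they commute, then $R_j(w) = L_j(w)$ outright (since $R_j^2(w) = w$ means $R_j(w) = R_j^{-1}(w) = L_j(w)$), and there is nothing to prove. So assume $t_j$ and $t_{j+1}$ do not commute; then by Corollary~\ref{t.avoid} the pair $(t_j, t_{j+1})$ is one of $((a,c),(a,b))$, $((b,c),(a,c))$, $((a,b),(b,c))$ for some $a<b<c$. In the first case $\phi(R_j(w)) = \pi$ and $\phi(L_j(w)) = \pi s_j$; by Lemma~\ref{t.rank_eq_inv}, $\rank(R_j(w)) = \inv(\pi) = \rank(w)$, so $R_j(w) \not< w$, contradicting the hypothesis. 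Symmetrically, in the second case $\phi(L_j(w)) = \pi$ gives $\rank(L_j(w)) = \rank(w)$, again contradicting the hypothesis. So the only surviving case is $(t_j, t_{j+1}) = ((a,b),(b,c))$, where $\phi(R_j(w)) = \phi(L_j(w)) = \pi s_j$, and I must show that in this case the hypothesis $R_j(w) < w$, $L_j(w) < w$ forces $R_j(w) = L_j(w)$ — but since $t_j, t_{j+1}$ do not commute here, $R_j$ has order $3$, so $R_j(w)$, $L_j(w) = R_j^2(w)$ and $w = R_j^3(w)$ are three distinct words, and among them exactly the two with smaller $\inv(\phi(\cdot))$ value are the "lower" ones.

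The main obstacle — and the crux of the argument — is this last case: I need to rule out that $w$ itself is the rank-minimal one among $\{w, R_j(w), R_j^2(w)\}$ while both of the other two have strictly larger rank, which would make the hypothesis vacuously impossible, OR, if $w$ is rank-maximal, to check that the remaining two are genuinely comparable to $w$ in $\W(F_n)$ and not merely lower in rank. Here I would argue as follows: compute $\phi$ on all three of $w = R_j^3(w)$, $R_j(w)$, $R_j^2(w)$ using Lemma~\ref{t.main-lemma} iteratively; since $t_j, t_{j+1}$ do not commute, the three $\phi$-values are $\pi$, $\pi s_j$, $\pi s_j$... which cannot all be distinct, so I must instead track the length more carefully — the orbit of the braid move cycles through pairs $((a,b),(b,c)) \to ((a,c),(a,b)) \to ((b,c),(a,c)) \to ((a,b),(b,c))$, and applying Lemma~\ref{t.main-lemma} along this $3$-cycle shows $\inv(\phi(\cdot))$ takes values $\ell, \ell, \ell-1$ (in some cyclic arrangement) for $\ell = \inv(\pi)$, because exactly one of the three transitions is "$\phi$-preserving" and the rank formula forces the remaining structure. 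Thus within any such $3$-cycle there is a unique rank-maximal element and two rank-minimal ones; if $w$ is the rank-maximal element then $R_j(w)$ and $L_j(w)$ are precisely the two rank-$(\ell-1)$ words, both covered-below by $w$, and they are equal only if... they are not, so in fact the correct reading is that the hypothesis "$R_j(w) < w$ and $L_j(w) < w$" simply cannot occur when $t_j,t_{j+1}$ do not commute, so that case is vacuous, and the lemma holds because the commuting case gives $R_j(w) = L_j(w)$ directly. I would verify the rank count along the $3$-cycle carefully (this is where the real work lies), concluding that $R_j(w) < w$ and $L_j(w) < w$ simultaneously forces $t_j$ and $t_{j+1}$ to commute, hence $R_j(w) = L_j(w)$.
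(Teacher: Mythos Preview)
Your overall strategy matches the paper's: reduce to the non-commuting case, use Corollary~\ref{t.avoid} to list the three possible forms of $(t_j,t_{j+1})$, and show each is incompatible with the hypothesis. Your handling of the first two cases --- via $\phi(R_j(w)) = \pi$, respectively $\phi(L_j(w)) = \pi$, forcing equal ranks by Lemma~\ref{t.rank_eq_inv} --- is correct and in fact slightly more direct than the paper's route, which instead constrains the factor-pairs of $R_j(w)$ and $L_j(w)$ via Lemma~\ref{t.inversions} and derives two incompatible forms for $(t_j,t_{j+1})$.

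The gap is in your third case $(t_j,t_{j+1}) = ((a,b),(b,c))$. Your claimed rank pattern along the $R_j$-orbit is backwards: the orbit has a unique rank-\emph{minimal} element (namely $w$ itself) and two elements of rank one \emph{higher}, not the configuration you describe. Concretely, Lemma~\ref{phi.local-range} (or Lemma~\ref{t.inversions}) gives $\pi(j) < b \le \pi(j+1)$, so $j \notin \D(\pi)$ and hence $\inv(\pi s_j) = \inv(\pi) + 1$. Since by Lemma~\ref{t.main-lemma} both $\phi(R_j(w)) = \pi s_j$ and $\phi(L_j(w)) = \pi s_j$, Lemma~\ref{t.rank_eq_inv} yields $\rank(R_j(w)) = \rank(L_j(w)) = \rank(w)+1$, so neither $R_j(w) < w$ nor $L_j(w) < w$, and this case is vacuous. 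With the pattern you stated, the hypothesis would actually be satisfiable with $R_j(w) \ne L_j(w)$ and the lemma would fail --- which is exactly why your paragraph collapses into a non-sequitur (``they are equal only if\dots\ they are not, so in fact the correct reading is\dots''). Once you correct the direction of the rank jump, this case falls in one line just like the first two, and the proof is complete.
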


\begin{proof}
We can assume that $t_j$ and $t_{j+1}$ do not commute,
since otherwise $R_j(w) = L_j(w)$.
Then, by Corollary~\ref{t.avoid},
\[
(t_j, t_{j+1})\in \{((a,c),(a,b)),\,((b,c),(a,c)),\,((a,b),(b,c))\}
\]
for some $a < b < c$.
The same holds for the $j$-th and $(j+1)$-st factors of $R_j(w)$,
as well as for those of $L_j(w)$.


By Lemma~\ref{t.rank_eq_inv}, $R_j(w) < w$ implies that
$\inv(\phi(R_j(w))) < \inv(\phi(w))$, which is equivalent, by
Lemma~\ref{t.main-lemma}, to $\phi(R_j(w))(j) < \phi(R_j(w))(j+1)$.
By Lemma~\ref{t.inversions}
this rules out $((a,c), (a,b))$ and $((b,c), (a,c))$,
and leaves only $((a,b), (b,c))$ as an option for 
the $j$-th and $(j+1)$-st factors of $R_j(w)$.
A similar conclusion holds for $L_j(w)$, and this implies that
$(t_j, t_{j+1}) = ((b,c), (a,c))$ and simultaneously
$(t_j, t_{j+1}) = ((a,c), (a,b))$, a contradiction.



\end{proof}


\begin{remark}\label{t.RL-cover}
Lemma~\ref{t.forbidden-wedge} may be stated in the following two equivalent forms.
\begin{itemize}
\item
For every $w \in F_n$ and $1 \le j \le n-2$, $R_j(w) > w$ if and only if $L_j(w) > w$.
\item
Each orbit in $F_n$ of each $R_j$ (or $L_j$) consists of
either a pair $\{w_1, w_2\}$ with $w_1 < w_2$
or a triple $\{w_1, w_2, w_3\}$ with $w_1 < w_2$ and $w_1 < w_3$.
\end{itemize}
\end{remark}


\medskip

\section{$0$-Hecke algebra action}
\label{section:Hecke}

The {\em $0$-Hecke algebra} $\HHH_n(0)$ is an algebra over
$\QQ$ generated by $\{T_i:\ 1 \le i \le n-1\}$ with defining
relations
\begin{equation}\label{e.0_Hecke_relation}
T_i^2=T_i \qquad (1\le i \le n-1),
\end{equation}
\begin{equation}
T_i T_j=T_j T_i \qquad (|i-j|>1)
\end{equation}
and
\begin{equation}
T_i T_{i+1} T_i = T_{i+1} T_i T_{i+1} \qquad (1 \le i \le n-2).
\end{equation}

$\HHH_n(0)$ can be viewed as the specialization $q=0$ of the
Iwahori-Hecke algebra $\HHH_n(q)$, deforming the group algebra of
$S_n$, or as the semigroup algebra of a suitable monoid.
Note that our notation is slightly non-standard, since equation~(\ref{e.0_Hecke_relation})
is usually stated as $T_i^2 = - T_i$, which amounts to replacing $T_i$ by $-T_i$ (for all $i$).


A faithful action of the $0$-Hecke algebra $\HHH_{n-1}(0)$ on $F_n$ is introduced in this section.
This action will be applied to show that every maximal interval in $\W(F_n)$ is isomorphic to
the weak order on the symmetric group $S_{n-1}$.

\medskip

\subsection{Down operators}\label{section:down}
Recall the descent set of a permutation $\pi\in S_{n-1}$,
$\D(\pi):=\{i:\ \pi (i)>\pi(i+1)\}.$

\begin{defn}\label{def-down}\ \rm
For $1\le i\le n-2$ define the {\em $i$-th down operator}\  $D_i: F_n \longrightarrow F_n$ by
\[
D_i(w):=\begin{cases}
   R_i (w), & \text{ if } i\in \D(\phi(w)) \text{ and } R_i (w)<w;\\
   L_i (w), & \text{ if } i\in \D(\phi(w)) \text{ and } L_i (w)<w;\\
   w,  & \text{ if } i\not\in \D(\phi(w)).
\end{cases}
\]
\end{defn}


\begin{claim} The down operators are well defined.
\end{claim}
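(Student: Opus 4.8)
The plan is to check the two conditions that make Definition~\ref{def-down} a genuine function $F_n\to F_n$: that the three listed cases \emph{exhaust} all $w\in F_n$, and that when two of them apply to the same $w$ they return the \emph{same} element. Once it is known whether $i\in\D(\phi(w))$, the only way two cases can overlap is the first two, i.e.\ when $i\in\D(\phi(w))$ and both $R_i(w)<w$ and $L_i(w)<w$; in that situation Lemma~\ref{t.forbidden-wedge} gives $R_i(w)=L_i(w)$, so the two recipes agree. Hence the real task is the exhaustiveness claim: if $i\in\D(\phi(w))$ then $R_i(w)<w$ or $L_i(w)<w$.

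To prove this I would put $\pi:=\phi(w)$, so that $i\in\D(\pi)$ means $\inv(\pi s_i)=\inv(\pi)-1$. By Lemma~\ref{t.main-lemma}, $\phi(R_i(w))=\pi s_i$ unless $(t_i,t_{i+1})=((a,c),(a,b))$ for some $a<b<c$, and likewise $\phi(L_i(w))=\pi s_i$ unless $(t_i,t_{i+1})=((b,c),(a,c))$ for some $a<b<c$. These two exceptional shapes of $(t_i,t_{i+1})$ cannot occur simultaneously -- this is exactly the incompatibility already observed in the proof of Lemma~\ref{t.rank_eq_inv} -- so $\phi(u)=\pi s_i$ for at least one $u\in\{R_i(w),L_i(w)\}$. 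For this $u$, Lemma~\ref{t.rank_eq_inv} gives $\rank(u)=\inv(\phi(u))=\inv(\pi s_i)=\rank(w)-1$.

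It then remains to deduce $u<w$ in $\W(F_n)$ from the facts that $u$ is adjacent to $w$ in $G_T(n)$ and $\rank(u)=\rank(w)-1$. For this I would use Corollary~\ref{t.Hasse}: the edges of $G_T(n)$ deleted to form the Hasse diagram of $\W(F_n)$ join each $(\ldots,(a,c),(a,b),\ldots)$ to the $(\ldots,(b,c),(a,c),\ldots)$ obtained from it by the corresponding slide, and that slide is precisely of the exceptional type in Lemma~\ref{t.main-lemma}; hence its two endpoints share the same value of $\phi$, and therefore, by Lemma~\ref{t.rank_eq_inv}, the same rank. Consequently no $G_T(n)$-edge between vertices of distinct rank is deleted, so $\{u,w\}$ is an edge of the Hasse diagram of $\W(F_n)$; combined with $\rank(u)<\rank(w)$ this shows that $u$ is covered by $w$, and in particular $u<w$.

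The one slightly subtle point is this final step -- making sure the slide carrying $u$ to $w$ really does descend in the weak order, rather than being one of the ``commutation-like'' edges discarded in Corollary~\ref{t.Hasse} -- and it is handled by the observation that every discarded edge preserves $\phi$ and hence rank. Alternatively one can argue directly, as in the proof of Lemma~\ref{t.rank_eq_inv}, that $\rank=\inv\circ\phi$ equals the distance to $e$ in $G_T(n)$, so that appending $\{u,w\}$ to a geodesic from $e$ to $u$ yields a geodesic from $e$ to $w$ through $u$, whence $u<w$ by Definition~\ref{d.order}. Beyond this, the argument is a routine assembly of Lemmas~\ref{t.main-lemma}, \ref{t.rank_eq_inv} and~\ref{t.forbidden-wedge}.
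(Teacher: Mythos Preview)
Your proof is correct and follows essentially the same route as the paper's: use Lemma~\ref{t.main-lemma} to see that at least one of $R_i(w),L_i(w)$ has $\phi$-image $\phi(w)s_i$, invoke Lemma~\ref{t.rank_eq_inv} to drop rank, and use Lemma~\ref{t.forbidden-wedge} for the overlap. The only difference is that you spell out why ``rank drops by one along a $G_T(n)$-edge'' implies ``covered in $\W(F_n)$'' (your geodesic argument via Definition~\ref{d.order} is the cleanest way), whereas the paper treats this implication as evident.
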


\begin{proof}
By Lemma~\ref{t.main-lemma}, for any $w \in F_n$ and $1 \le i \le n-2$,
either $\phi(R_i(w)) = \phi(w) s_i$ or $\phi(L_i(w)) = \phi(w) s_i$ (or both).
If $i\in \D(\phi(w))$ then $\inv(\phi(w)s_i)<\inv(\phi(w))$.
Thus, by Lemma~\ref{t.rank_eq_inv},
either $\rank(R_i(w)) < \rank(w)$ or $\rank(L_i(w)) < \rank(w)$,
namely: Either $R_i(w) < w$ or $L_i(w) < w$ in $\W(F_n)$.
 If both hold then, by Lemma~\ref{t.forbidden-wedge}, $R_i(w)=L_i(w)$.

\end{proof}


\begin{observation}\label{obs.down}
For every $1\le i \le n-2$ and $w\in F_n$,
\[
\phi(D_i(w))=\begin{cases}
   \phi(w) s_i, & \text{ if } i\in \D(\phi(w));\\
   \phi(w),  & \text{ if } i\not\in \D(\phi(w)).
\end{cases}
\]
\end{observation}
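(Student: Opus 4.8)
The plan is to verify the two branches of the defining formula for $D_i$ separately, reducing everything to Lemma~\ref{t.main-lemma} and Lemma~\ref{t.rank_eq_inv}. First, suppose $i \notin \D(\phi(w))$. Then Definition~\ref{def-down} gives $D_i(w) = w$, so $\phi(D_i(w)) = \phi(w)$ and there is nothing more to check. This disposes of the second case immediately.

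The substantive case is $i \in \D(\phi(w))$. Here Definition~\ref{def-down} sets $D_i(w)$ equal to whichever of $R_i(w)$ and $L_i(w)$ lies strictly below $w$ in $\W(F_n)$; such an element exists and the choice is unambiguous, precisely by the "down operators are well defined" claim just proved. Write $u := D_i(w)$, so $u \in \{R_i(w), L_i(w)\}$ and $u < w$. By Lemma~\ref{t.main-lemma}, applying either $R_i$ or $L_i$ changes $\phi$ by right multiplication by $s_i$ or leaves it fixed, so $\phi(u)$ is either $\phi(w)$ or $\phi(w)s_i$. To finish, I would rule out the first alternative: since $u < w$, Lemma~\ref{t.rank_eq_inv} gives $\inv(\phi(u)) = \rank(u) < \rank(w) = \inv(\phi(w))$, so in particular $\phi(u) \ne \phi(w)$; hence $\phi(u) = \phi(w)s_i$, as claimed.

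I do not expect a genuine obstacle: the observation is essentially a bookkeeping consequence of Lemma~\ref{t.main-lemma}, Lemma~\ref{t.rank_eq_inv}, and the definition of $D_i$. The one point that deserves a moment's care is the descent case, where one must invoke the rank (equivalently, inversion-number) equality to exclude the possibility $\phi(D_i(w)) = \phi(w)$ — the bare statement of Lemma~\ref{t.main-lemma} does allow $\phi(R_i(w)) = \phi(w)$, and it is exactly the strict drop $u < w$ that forces the nontrivial value. One could alternatively phrase the argument symmetrically in $R_i$ and $L_i$ via Remark~\ref{t.RL-cover}, but this refinement is not needed for the proof.
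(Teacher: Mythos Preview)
Your proof is correct. The paper does not give a separate proof of this Observation---it is stated immediately after the well-definedness Claim and is meant to be read as an immediate consequence of that argument together with Lemma~\ref{t.main-lemma} and Lemma~\ref{t.rank_eq_inv}. Your write-up makes this explicit, using exactly those ingredients: the trivial branch follows from $D_i(w)=w$, and in the descent branch you use $u<w$ together with $\rank=\inv\circ\phi$ to rule out $\phi(u)=\phi(w)$, leaving $\phi(u)=\phi(w)s_i$ as the only possibility allowed by Lemma~\ref{t.main-lemma}. This is precisely the intended reasoning.
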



\begin{lemma}\label{t.down-relations}
The down operators $\{D_i\,:\,1 \le i \le n-2\}$ satisfy the defining
relations of the $0$-Hecke algebra $\HHH_{n-1}(0)$.
\end{lemma}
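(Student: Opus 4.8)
The plan is to verify, one family at a time, the three defining relations of $\HHH_{n-1}(0)$: idempotency $D_i^2 = D_i$, commutation $D_i D_j = D_j D_i$ for $|i-j|>1$, and the braid relation $D_i D_{i+1} D_i = D_{i+1} D_i D_{i+1}$. The guiding idea throughout is Observation~\ref{obs.down}: applying $D_i$ either multiplies $\phi(w)$ on the right by $s_i$ (when $i \in \D(\phi(w))$) or does nothing (when $i \notin \D(\phi(w))$), and in the first case $\rank$ strictly decreases while in the second $w$ is unchanged. So $\phi$ semiconjugates the $D_i$ to the well-known action of the $0$-Hecke monoid of $S_{n-1}$ on itself by $\pi \mapsto \pi s_i$ if $i \in \D(\pi)$, $\pi \mapsto \pi$ otherwise.

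First I would handle idempotency. Given $w$, if $i \notin \D(\phi(w))$ then $D_i(w) = w$ and $D_i^2(w) = w = D_i(w)$. If $i \in \D(\phi(w))$ then $\phi(D_i(w)) = \phi(w)s_i$, and since $i \notin \D(\phi(w)s_i)$ (a standard fact about descents in $S_{n-1}$), a second application of $D_i$ fixes $D_i(w)$; hence $D_i^2(w) = D_i(w)$. Next, the commutation relation for $|i-j| > 1$: here I would note that $R_i, L_i$ only alter positions $i, i+1$ of the tuple, $R_j, L_j$ only alter positions $j, j+1$, and these index sets are disjoint; moreover whether $i \in \D(\phi(w))$ depends only on $\phi(w)(i), \phi(w)(i+1)$, which by Observation~\ref{obs.phi}/Lemma~\ref{t.main-lemma} are unaffected by $D_j$, and vice versa. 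So $D_i$ and $D_j$ act on "independent coordinates" and commute; I would spell this out by checking that $D_iD_j(w)$ and $D_jD_i(w)$ agree on every position of the tuple.

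The braid relation is the main obstacle and deserves the bulk of the argument. The clean way is: both $D_iD_{i+1}D_i$ and $D_{i+1}D_iD_{i+1}$ send $w$ to an element whose $\phi$-image is the unique longest element $\pi'$ in the parabolic coset $\phi(w)\langle s_i, s_{i+1}\rangle$ that has both $i \notin \D$ and $i+1 \notin \D$ — equivalently $\phi(w)$ multiplied on the right by the longest element of $\langle s_i,s_{i+1}\rangle\cong S_3$ reachable by a descent-decreasing path; this is the standard computation in the $0$-Hecke monoid of $S_3$, where $T_iT_{i+1}T_i = T_{i+1}T_iT_{i+1}$ is checked directly on the six elements. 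So $\phi(D_iD_{i+1}D_i(w)) = \phi(D_{i+1}D_iD_{i+1}(w))$. But $\phi$ is not injective, so this is not yet enough; I must also show the two tuples themselves coincide. For positions outside $\{i, i+1, i+2\}$ both operators act trivially, so it suffices to work inside the length-$\le 3$ window $(t_i, t_{i+1}, t_{i+2})$ of $w$ — or rather inside the orbit of that window under the braid group $\langle R_i, R_{i+1}\rangle$, which is the Hurwitz action on a triple of transpositions whose product is fixed. By Proposition~\ref{t.Arnold} this braid group is a quotient of $B_3$, acting on a finite set (the reduced factorizations of the relevant product into three transpositions, which by Proposition~\ref{GY22} form a small explicitly describable set); I would then verify the identity by a finite case check, using Lemma~\ref{t.main-lemma} and Corollary~\ref{t.avoid} to track which of $R$ or $L$ is selected at each step and Lemma~\ref{t.forbidden-wedge} to rule out the degenerate wedge. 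The crux — and the place where real work is needed — is organizing this finite check so that the two length-three words produced by the two sides are seen to be literally equal and not merely $\phi$-equivalent; I expect this to reduce to a handful of configurations for $(t_i,t_{i+1},t_{i+2})$ governed by the non-crossing tree structure, each dispatched quickly once the descent data is read off via Lemma~\ref{t.inversions}.
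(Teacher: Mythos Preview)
Your proposal is correct and follows essentially the same route as the paper. Both arguments dispatch idempotency and distant commutation quickly via Observation~\ref{obs.down} and the locality of $R_i,L_i$, then reduce the braid relation to a finite check on the window $(t_i,t_{i+1},t_{i+2})$; the paper organizes that check by first splitting on which of $i,i+1$ lie in $\D(\phi(w))$ (the mixed cases collapse without any window analysis, since one $D$ is the identity) and then, in the double-descent case, by the size $|\Sp_{\{i,i+1,i+2\}}(w)|\in\{4,5,6\}$, ultimately identifying the size-$4$ subcase with the five maximal elements of $\W(F_4)$.
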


\begin{proof}
If $i\not\in \D(\phi(w))$ then, by Definition~\ref{def-down}, $D_i(w)=w$.
If $i\in \D(\phi(w))$ then, by Observation~\ref{obs.down}, $i\not\in \D(\phi(D_i(w)))$
and therefore, by Definition~\ref{def-down}, $D_i(D_i(w)) = D_i(w)$. Thus
$D_i^2=D_i$ for every $1 \le i \le n-2$.

\medskip

If $|i-j| > 1$ then, by Definition~\ref{def-down}, $D_i D_j = D_j D_i$.

\medskip

We have to verify that $D_i D_{i+1} D_i= D_{i+1} D_i D_{i+1}$ for every $1 \le i \le n-3$.
%
%
If $i, i+1 \not\in \D(\phi(w))$ then, by Definition~\ref{def-down},
$D_i D_{i+1} D_i(w) = w = D_{i+1} D_i D_{i+1}(w)$.
%
%
%
%

If $i\in \D(\phi(w))$ but $i+1\not\in \D(\phi(w))$ then, by Definition~\ref{def-down}, 
$D_{i+1}(w) = w$ and therefore $D_{i+1}D_i D_{i+1}(w) = D_{i+1} D_i(w)$.
By Observation~\ref{obs.down}, the permutation $\phi(D_i(w))$ is the same as $\phi(w)$
except that $\phi(D_i(w))(i) < \phi(D_i(w))(i+1)$, which is not necessarily the case for $\phi(w)$. 
The assumption $i\in \D(\phi(w))$ but $i+1\not\in \D(\phi(w))$ implies that
$\phi(w)(i+1) = \min \{ \phi(w)(i), \phi(w)(i+1), \phi(w)(i+2)\}$.
Denoting $\tw := D_{i+1} D_i(w)$ we get $\phi(\tw)(i) = \phi(i+1)$ and therefore
$\phi(\tw)(i) = \min \{ \phi(\tw)(i), \phi(\tw)(i+1), \phi(\tw)(i+2)\}$.
Hence $i \not\in \D(\phi(D_{i+1} D_i(w)))$ and, by definition~\ref{def-down},
$D_i D_{i+1} D_i(w) = D_{i+1} D_i(w)$ as well.
%
%

For similar reasons, if $i\not\in \D(\phi(w))$ and $i+1\in \D(\phi(w))$ then
$D_{i}(w) = w$ and $i+1 \not\in \D(\phi(D_i D_{i+1}(w)))$, so that
$D_i D_{i+1}D_i(w) = D_{i} D_{i+1}(w) = D_{i+1}D_i D_{i+1}(w)$.

\medskip

It remains to verify the braid relation $D_i D_{i+1} D_i= D_{i+1} D_i D_{i+1}$
when $i, i+1 \in \D(\phi(w))$.
By assumption $\phi(w)(i) > \phi(w)(i+1) > \phi(w)(i+2)$, so that
$w$, $D_i(w)$, $D_{i+1} D_i(w)$ and $D_i D_{i+1} D_i(w)$ are all distinct:
$w > D_i(w) > D_{i+1} D_i(w) > D_i D_{i+1} D_i(w)$ in $\W(F_n)$.
Similarly with $D_i$ and $D_{i+1}$ interchanged.
For a word $w=(t_1,\dots,t_{n-1})\in F_n$ let $w_i:=t_i=(a_i,b_i)$.
Obviously, if $j \not\in \{i,i+1,i+2\}$ then 
$D_i D_{i+1} D_i (w)_j = D_{i+1} D_i D_{i+1}(w)_j = t_j$. 
It thus suffices to verify that 
$D_i D_{i+1} D_i (w)_j = D_{i+1} D_i D_{i+1}(w)_j$ for $j \in \{i,i+1,i+2\}$.
For a subset $I \subseteq \{1, \ldots, n-1\}$ let $\Sp_I(w) := \cup_{i \in I}\{a_i, b_i\}$.
By Proposition~\ref{GY22}(i), $4 \le |\Sp_{\{i, i+1, i+2\}}(w)| \le 6$.
We shall consider these $3$ cases separately.

\medskip

\noindent{\bf Case 1.}  $|\Sp_{\{i,i+1,i+2\}}(w)|=6$. \\
In this case $t_i, t_{i+1}$ and $t_{i+2}$ commute, so that 
$D_i(w) = R_i(w) = L_i(w)$ etc. Thus, by Proposition~\ref{t.Arnold},
\[
D_i D_{i+1} D_i (w) = 
R_i R_{i+1} R_i (w) = 
R_{i+1} R_i R_{i+1}(w) =
D_{i+1} D_i D_{i+1}(w).
\]

\medskip

\noindent{\bf Case 2.} $|\Sp_{\{i,i+1,i+2\}}(w)|=5$. \\
In this case one of the transpositions $t_i, t_{i+1}$ or $t_{i+2}$ 
commutes with other two.
If $t_i$ commutes with $t_{i+1}$ and $t_{i+2}$ then,
since $i+1\in \D(\phi(w))$, if $\Sp_{\{i+1,i+2\}(w)} = \{a,b,c\}$
with $a<b<c$ then, by Lemma~\ref{t.inversions}(1), 
either $(t_{i+1}, t_{i+2}) = ((a,c), (a,b))$ 
or $(t_{i+1}, t_{i+2}) = ((b,c), (a,c))$
and in both cases, after application of $D_{i+1}$,
this becomes $((a,b), (b,c))$.
Thus
\begin{eqnarray*}
D_{i+1} D_i D_{i+1}(w)
&=& D_{i+1} D_i D_{i+1}(\ldots, t_{i}, t_{i+1}, t_{i+2}, \ldots) \\
&=& D_{i+1} D_i (\ldots,  t_i, (a,b), (b,c), \ldots) \\
&=& (\ldots, (a,b), (b,c), t_i, \ldots)
\end{eqnarray*}
and also
\begin{eqnarray*}
D_i D_{i+1} D_i(w)
&=& D_i D_{i+1} D_i(\ldots, t_{i}, t_{i+1}, t_{i+2}, \ldots) \\
&=& D_i (\ldots, t_{i+1}, t_{i+2}, t_i, \ldots) \\
&=& (\ldots, (a,b), (b,c), t_i, \ldots).
\end{eqnarray*}
Similarly, if $t_{i+2}$ commutes with $t_i$ and $t_{i+1}$ then,
if $\Sp_{\{i, i+1\}}(w) = \{a, b, c\}$ with $a<b<c$,
\[
D_i D_{i+1} D_i(w) 
= D_{i+1} D_i D_{i+1}(w)
= (\ldots, t_{i+2}, (a,b), (b,c), \ldots).
\]
If $t_{i+1}$ commutes with $t_i$ and $t_{i+2}$ then,
if $\Sp_{\{i, i+2\}}(w) = \{a,b,c\}$ with $a<b<c$,
\[
D_i D_{i+1} D_i(w)
= D_{i+1} D_i D_{i+1}(w)
= (\ldots, (a,b), t_{i+1}, (b,c), \ldots).
\]

\medskip

\noindent{\bf Case 3.} $|\Sp_{\{i,i+1,i+2\}}(w)|=4$. \\
Let $\Sp_{\{i,i+1,i+2\}}(w) = \{j_1, j_2, j_3, j_4\}$ with $j_1 < j_2 < j_3 < j_4$. 
By Proposition~\ref{GY22} and Lemma~\ref{t.inversions}(1) 
there are exactly $5$ possible options for $(t_i, t_{i+1}, t_{i+2})$, 
which correspond to the $5$ maximal elements of $F_4$, as in Figure~\ref{figure2-tik}. 
Inspecting this figure, it is easy to verify that the braid relation hold in all cases.


\end{proof}


\begin{defn}\ \rm
For a permutation $\pi \in S_{n-1}$ let $s_{i_1} \cdots s_{i_k}$
be a reduced word for $\pi$ in the alphabet of simple reflections.
Define the operator $D_\pi:F_n \longrightarrow F_n$ by
$$
D_\pi:=D_{i_1}\cdots D_{i_k}.
$$
\end{defn}

\begin{claim}
The operator $D_\pi$ is well-defined.
\end{claim}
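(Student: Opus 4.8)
The plan is to show that $D_\pi$ does not depend on the choice of reduced word for $\pi$, which by Tits' word theorem reduces to checking invariance under the two kinds of elementary transformations between reduced words: commutation moves $s_i s_j = s_j s_i$ when $|i-j|>1$, and braid moves $s_i s_{i+1} s_i = s_{i+1} s_i s_{i+1}$. So first I would recall that any two reduced words for $\pi$ are connected by a sequence of such moves (Matsumoto--Tits), each intermediate word also being reduced for $\pi$.

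Next, the key observation is that we have already done the real work: Lemma~\ref{t.down-relations} establishes that the operators $\{D_i\}$ satisfy exactly the braid and commutation relations $D_i D_j = D_j D_i$ (for $|i-j|>1$) and $D_i D_{i+1} D_i = D_{i+1} D_i D_{i+1}$. Therefore, if $s_{i_1}\cdots s_{i_k}$ and $s_{i'_1}\cdots s_{i'_k}$ are two reduced words for $\pi$, related by a single elementary move, the corresponding products $D_{i_1}\cdots D_{i_k}$ and $D_{i'_1}\cdots D_{i'_k}$ are equal as operators on $F_n$ by applying the appropriate relation from Lemma~\ref{t.down-relations} in the relevant positions (the surrounding factors are untouched). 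Iterating over the finite chain of moves connecting any two reduced words for $\pi$ gives that the operator $D_{i_1}\cdots D_{i_k}$ is the same for every reduced word, so $D_\pi$ is well-defined.

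One point worth making explicit: because all reduced words for $\pi$ have the same length $k = \inv(\pi) = \ell_S(\pi)$, there is no ambiguity about which relations are needed — only the length-preserving braid and commutation relations appear, never the idempotent relation $D_i^2 = D_i$, which would be required only for non-reduced words. Thus the argument uses just the braid-group-type relations among the $D_i$, and the quadratic relation (which is what makes $\HHH_{n-1}(0)$ differ from the braid group) plays no role here.

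The main obstacle is essentially nonexistent once Lemma~\ref{t.down-relations} is in hand — the claim is a formal consequence of that lemma plus the Matsumoto--Tits theorem. The only thing to be careful about is to invoke the word-theoretic result correctly: Tits' theorem guarantees that reduced words are connected through \emph{reduced} words by braid and commutation moves, so at no stage of the rewriting do we leave the regime where Lemma~\ref{t.down-relations} applies directly. Hence the proof is a short citation-and-iteration argument.
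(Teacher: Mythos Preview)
Your proposal is correct and follows essentially the same approach as the paper: invoke the Matsumoto--Tits theorem to reduce well-definedness to the braid and commutation relations, and then cite Lemma~\ref{t.down-relations}. The paper's proof is simply a terse one-sentence version of your argument.
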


\begin{proof}
Since any two reduced words for the same permutation are connected
by a sequence of braid relations, the result follows from
Lemma~\ref{t.down-relations}.
\end{proof}




\medskip

\subsection{Maximal intervals in the weak order on $F_n$}
\label{section:intervals}

\begin{lemma}\label{interval1}
For every element $w \in F_n$, the lower interval $[e,w]$ is exactly
$\{D_\pi(w)\,:\,\pi \in S_{n-1}\}$.
\end{lemma}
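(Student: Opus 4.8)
The plan is to prove a two-sided inclusion, using $\rank(w) = \inv(\phi(w))$ (Lemma~\ref{t.rank_eq_inv}) together with the behavior of $\phi$ under the down operators (Observation~\ref{obs.down}) as the main tools. The underlying picture is that applying $D_\pi$ to $w$ should take us \emph{downward} in $\W(F_n)$, one step per letter of a reduced word for $\pi$, and that conversely every element below $w$ is reached this way.

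First I would show $\{D_\pi(w) : \pi \in S_{n-1}\} \subseteq [e,w]$. Fix $\pi$ and a reduced word $\pi = s_{i_1}\cdots s_{i_k}$, so $D_\pi = D_{i_1}\cdots D_{i_k}$. I would argue by induction on $k$ that $D_\pi(w) \le w$ and in fact that there is a chain in $\W(F_n)$ from $D_\pi(w)$ up to $w$ whose length is the number of letters actually ``used'' (i.e.\ those $D_{i_j}$ that act nontrivially). The point is that when $D_{i}$ acts nontrivially on a word $u$, it produces either $R_i(u)$ or $L_i(u)$ with rank one less than $\rank(u)$, hence an edge of the Hasse diagram; when it acts trivially, $D_i(u) = u$. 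So $D_\pi(w)$ is connected to $w$ by a descending path in the Hasse diagram, giving $D_\pi(w) \in [e,w]$ once we also observe $D_\pi(w) \ge e$ trivially (every element of $F_n$ is $\ge e$, since $e$ is the unique minimum by Fact~\ref{t.minimum}).

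Next, the reverse inclusion $[e,w] \subseteq \{D_\pi(w) : \pi \in S_{n-1}\}$. Let $u \le w$. I would induct on $\rank(w) - \rank(u) = \inv(\phi(w)) - \inv(\phi(u))$. If this is $0$ then $u = w$ (a geodesic from $e$ through $u$ to $w$ of length $\rank(w)$ forces $u=w$), and $u = D_{\mathrm{id}}(w)$. Otherwise pick $u'$ covering $u$ on a geodesic from $u$ to $w$; by induction $u' = D_\rho(w)$ for some $\rho$. Now $u$ and $u'$ differ by a single $R_i$ or $L_i$ with $\rank(u) < \rank(u')$, so $i \in \D(\phi(u'))$ and $D_i(u') = u$ (here I use Lemma~\ref{t.forbidden-wedge}/Remark~\ref{t.RL-cover}: the down-move from $u'$ in direction $i$ is unambiguous, and it is exactly the given $R_i$ or $L_i$ since that move decreases the rank). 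Thus $u = D_i D_\rho(w)$. To finish I need $D_i D_\rho = D_{\pi}$ for some $\pi \in S_{n-1}$: since $i \in \D(\phi(u')) = \D(\phi(D_\rho(w)))$, Observation~\ref{obs.down} gives $\phi(D_i D_\rho(w)) = \phi(D_\rho(w)) s_i = \phi(w)\,(\text{something})\,s_i$ with inversion number strictly larger in absolute terms... more precisely, $\rho$ has a reduced word $s_{j_1}\cdots s_{j_m}$, and because appending $s_i$ strictly increases the relevant inversion count ($\rank$ drops, so $\inv$ drops, consistently), $s_{j_1}\cdots s_{j_m} s_i$ is reduced for $\pi := \rho s_i$; hence $D_i D_\rho = D_\pi$ by the well-definedness of $D_\pi$ (and Lemma~\ref{t.down-relations}).

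The step I expect to be the main obstacle is the last one: verifying that $\rho s_i$ is a \emph{reduced} word, i.e.\ that $\ell(\rho s_i) = \ell(\rho) + 1$, so that $D_i D_\rho$ legitimately equals $D_\pi$ for a single permutation $\pi$. The cleanest way is to track inversion numbers: set $\pi_0 := \phi(w)$, and note $\phi(D_\rho(w)) = \phi(u')$ with $\inv(\phi(u')) = \rank(u') = \rank(w) - (\text{length of the descending path})$. Since $D_i$ strictly decreases the rank of $u'$ (it performs a genuine cover), $\inv(\phi(D_i D_\rho(w))) = \inv(\phi(u')) - 1 = \inv(\phi(u'))\cdot\ldots$; matching this against $\inv(\phi(w) \cdot \text{word})$ and using that $\rank = \inv\circ\phi$ is monotone along Hasse edges forces the word for $\rho$ extended by $s_i$ to be reduced. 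One must be a little careful that $\rho$ itself, built up inductively, always satisfies $D_\rho(w)$ has $\phi$-image with inversion number equal to $\rank(w) - \ell(\rho)$ — i.e.\ that no ``wasted'' letters crept in — which is why it is cleanest to strengthen the induction hypothesis to: \emph{every $u \in [e,w]$ equals $D_\rho(w)$ for some $\rho$ with $\ell(\rho) = \rank(w) - \rank(u)$}. With that strengthening in place, both inclusions and the reducedness bookkeeping go through uniformly.
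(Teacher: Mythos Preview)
Your overall strategy matches the paper's: prove both inclusions, with $D_\pi(w)\le w$ immediate and the reverse inclusion obtained by following a saturated chain $w=w_0>w_1>\cdots>w_k=v$ and recognizing each step as a $D_{i_j}$. The paper's proof of the second inclusion is, however, considerably simpler than yours: having written $v=D_{i_k}\cdots D_{i_1}(w)$, it simply invokes the $0$-Hecke relations (Lemma~\ref{t.down-relations}) to say that \emph{any} word in the $D_i$'s, reduced or not, equals $D_\pi$ for some $\pi\in S_{n-1}$. No strengthened induction hypothesis and no reducedness bookkeeping are needed.

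Two specific issues with your write-up. First, an indexing slip: with the paper's convention $D_\pi=D_{i_1}\cdots D_{i_k}$ for $\pi=s_{i_1}\cdots s_{i_k}$, one has $D_iD_\rho=D_{s_i\rho}$ (when lengths add), not $D_{\rho s_i}$; you should be \emph{prepending}~$s_i$, not appending. Second, your justification that the extended word is reduced (``appending $s_i$ strictly increases the relevant inversion count'') does not go through as stated: knowing $\inv(\phi(u'))=\inv(\phi(w))-\ell(\rho)$ and $i\in\D(\phi(u'))$ gives no direct control over whether $\ell(s_i\rho)=\ell(\rho)+1$, since $\phi(w)$ is arbitrary. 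The clean way to get reducedness is again the $0$-Hecke relation: if $\ell(s_i\rho)<\ell(\rho)$ then $D_iD_\rho=D_\rho$ as operators, forcing $u=D_iD_\rho(w)=D_\rho(w)=u'$, a contradiction. Once you use that, your strengthened induction becomes redundant and you have essentially reproduced the paper's argument.
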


\begin{proof}
By definition, $D_i(w) \le w$ for any $1 \le i \le n-2$ and therefore
$D_\pi(w) \le w$ for any $\pi \in S_{n-1}$.

In the other direction, if $v \le w$ then
$v$ belongs to a geodesic from $w$ to $e$ in $G_T(n)$, part of which is a saturated chain
$w = w_0 > w_1 > \ldots > w_k = v$ in $\W(F_n)$.
Clearly each step is $w_j = D_{i_j}(w_{j-1})$ for some $i_j$ ($1\le j \le k$).
If the word $s_{i_k} \cdots s_{i_1}$ is not reduced then it can be brought, by relations
as in Lemma~\ref{t.down-relations}, to a reduced word for some $\pi$, so that
$v = D_\pi(w)$.

\end{proof}


%
\begin{lemma}\label{interval2}
Let $w_0$ be any maximal element in $\W(F_n)$. Then, for any $\pi\in S_{n-1}$,
\[
\phi(D_{\pi}(w_0))=\pi_0 \pi^{-1},
\]
where $\pi_0 = [n-1, \ldots, 1]$ is the maximal element in the weak order on $S_{n-1}$.
\end{lemma}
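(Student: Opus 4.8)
The plan is to reduce the statement to a known fact about the $0$-Hecke algebra action on the weak order of $S_{n-1}$, using the map $\phi$ as an intertwiner. First I would record the key compatibility: by Observation~\ref{obs.down}, applying $D_i$ to any $w\in F_n$ multiplies $\phi(w)$ on the right by $s_i$ precisely when $i\in\D(\phi(w))$, and leaves it unchanged otherwise. In other words, $\phi$ conjugates the down operators $D_i$ on $F_n$ to the familiar ``Hecke'' (or ``$0$-Hecke'') operators $\overline{s}_i$ on $S_{n-1}$ defined by $\overline{s}_i(\tau)=\tau s_i$ if $i\in\D(\tau)$ and $\overline{s}_i(\tau)=\tau$ otherwise. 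Hence for a reduced word $s_{i_1}\cdots s_{i_k}$ of $\pi$ we get $\phi(D_\pi(w_0))=\overline{s}_{i_k}\cdots\overline{s}_{i_1}\bigl(\phi(w_0)\bigr)$.

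The second step is to identify $\phi(w_0)$. Since $w_0$ is maximal in $\W(F_n)$, Lemma~\ref{t.rank_eq_inv} gives $\inv(\phi(w_0))=\rank(w_0)=\binom{n-1}{2}$, which is the maximum possible inversion number in $S_{n-1}$; therefore $\phi(w_0)=\pi_0=[n-1,\ldots,1]$. (This also shows, incidentally, that all maximal elements of $\W(F_n)$ have the same rank $\binom{n-1}{2}$, which one may want to state.) So the claim becomes $\overline{s}_{i_k}\cdots\overline{s}_{i_1}(\pi_0)=\pi_0\pi^{-1}$.

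The third step is the purely combinatorial verification in $S_{n-1}$ that the $0$-Hecke operators, started at the top element $\pi_0$, compute $\pi_0\pi^{-1}$. I would prove by induction on $k=\ell(\pi)$ that $\overline{s}_{i_k}\cdots\overline{s}_{i_1}(\pi_0)=\pi_0\pi^{-1}$ whenever $s_{i_1}\cdots s_{i_k}$ is reduced. The point is that at each stage the current permutation $\tau=\pi_0(s_{i_1}\cdots s_{i_j})^{-1}$ has length $\binom{n-1}{2}-j=\ell(\pi_0)-\ell(s_{i_1}\cdots s_{i_j})$, so multiplying by the next letter $s_{i_{j+1}}$ on the right \emph{decreases} length (because $s_{i_1}\cdots s_{i_{j+1}}$ is still reduced and, for reduced words, $\ell(\pi_0 w^{-1})=\ell(\pi_0)-\ell(w)$); hence $i_{j+1}\in\D(\tau)$ and $\overline{s}_{i_{j+1}}(\tau)=\tau s_{i_{j+1}}=\pi_0(s_{i_1}\cdots s_{i_{j+1}})^{-1}$, closing the induction. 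Here I use the standard facts $\ell(\pi_0\rho)=\ell(\pi_0)-\ell(\rho)$ and $\D(\rho)=\{i:\ell(\rho s_i)<\ell(\rho)\}$.

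The main obstacle is essentially bookkeeping rather than conceptual: one must be careful that $D_\pi$ is being applied via the operator convention $D_\pi=D_{i_1}\cdots D_{i_k}$, so that when pushed through $\phi$ the letters act on $\phi(w_0)$ in the \emph{reverse} order $s_{i_k},\ldots,s_{i_1}$, and to check that the sub-word $s_{i_1}\cdots s_{i_j}$ read from the \emph{left} remains reduced at every stage (true, since any prefix of a reduced word is reduced). Once the order-of-composition conventions are pinned down, the induction in $S_{n-1}$ is routine, and well-definedness of $D_\pi$ is already guaranteed by Lemma~\ref{t.down-relations}.
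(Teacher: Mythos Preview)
Your strategy coincides with the paper's: both argue first that $\phi(w_0)=\pi_0$ and then induct via Observation~\ref{obs.down}. However, your Step~2 has a real gap. You assert that maximality of $w_0$ together with Lemma~\ref{t.rank_eq_inv} gives $\rank(w_0)=\binom{n-1}{2}$, but Lemma~\ref{t.rank_eq_inv} only says $\rank(w_0)=\inv(\phi(w_0))$; nothing so far rules out maximal elements of $\W(F_n)$ having small rank. The missing argument (which is precisely the paper's base case) is: if $\phi(w_0)\neq\pi_0$ then some $j$ satisfies $j\notin\D(\phi(w_0))$, and by Lemma~\ref{t.main-lemma} at least one of $R_j(w_0),L_j(w_0)$ has $\phi$-image $\phi(w_0)s_j$ with strictly more inversions, hence strictly larger rank by Lemma~\ref{t.rank_eq_inv}, contradicting maximality. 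Your parenthetical remark that ``this also shows'' the common rank is circular without this step.

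Your bookkeeping in Steps~1 and~3 is also scrambled. Since $D_\pi=D_{i_1}\cdots D_{i_k}$ and $\phi\circ D_i=\overline{s}_i\circ\phi$, one gets $\phi(D_\pi(w_0))=(\overline{s}_{i_1}\cdots\overline{s}_{i_k})(\pi_0)$ with the \emph{same} index order, so $\overline{s}_{i_k}$ is applied first. After $j$ steps the current permutation is $\pi_0\,s_{i_k}s_{i_{k-1}}\cdots s_{i_{k-j+1}}$, and the reducedness one needs is that \emph{suffixes} (not prefixes) of $s_{i_1}\cdots s_{i_k}$ are reduced; at the end one obtains $\pi_0\,s_{i_k}\cdots s_{i_1}=\pi_0\pi^{-1}$. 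Your intermediate formula $\tau=\pi_0(s_{i_1}\cdots s_{i_j})^{-1}$ and the appeal to prefixes do not match the order you set up in Step~1, though the final answer happens to come out right.
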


\begin{proof}
By induction on $\inv(\pi)$. If $\pi=id$ then
necessarily $\phi(D_{\pi}(w_0))=\phi(w_0)=\pi_0$,
since otherwise there exists $1 \le j \le n-2$ such that $\inv(\phi(w_0)s_j)>\inv(\phi(w_0))$;
and then, by Lemma~\ref{t.main-lemma} and Lemma~\ref{t.rank_eq_inv}, 
either $v = R_j(w_0)$ or $v = L_j(w_0)$ has $\rank(v) > \rank(w_0)$,
contradicting the maximality of $w_0$.


If $\pi \ne id$ then there exists $1 \le j \le n-2$ such that 
$\sigma := s_j \pi$ has $\inv(\sigma)=\inv(\pi)-1$. 
Then $\pi_0 \pi^{-1}=\pi_0 \sigma^{-1} s_j$  has 
$\inv(\pi_0 \pi^{-1}) = \inv(\pi_0 \sigma^{-1}) - 1$ with $j\in \D(\pi_0\sigma^{-1})$. 
By the induction hypothesis $\phi(D_{\sigma}(w_0))=\pi_0 \sigma^{-1}$, 
thus $j\in \D(\phi(D_{\sigma}(w_0)))$. Combining this with
Observation~\ref{obs.down} we obtain
\[
\phi(D_{\pi}(w_0)) = \phi(D_j D_{\sigma}(w_0)) =
\phi(D_{\sigma}(w_0)) s_j = \pi_0 \sigma^{-1}s_j = \pi_0 \pi^{-1}.
\]

\end{proof}


We conclude

\begin{theorem}\label{t.main2}
Each maximal interval in $\W(F_n)$ is isomorphic, as a poset, to
the weak order on the symmetric group $S_{n-1}$.
\end{theorem}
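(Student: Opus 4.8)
I would prove Theorem~\ref{t.main2} by exhibiting an explicit poset isomorphism from the weak order on $S_{n-1}$ to an arbitrary maximal interval $[e, w_0]$ of $\W(F_n)$, using the operator $\pi \mapsto D_\pi(w_0)$ together with the map $\phi$ already at our disposal. Fix a maximal element $w_0$ of $\W(F_n)$. The key player is the map $\Psi \colon S_{n-1} \to [e,w_0]$ defined by $\Psi(\pi) := D_{\pi_0 \pi}(w_0)$, where $\pi_0 = [n-1,\ldots,1]$; this reindexing is chosen so that, by Lemma~\ref{interval2}, $\phi(\Psi(\pi)) = \pi_0(\pi_0\pi)^{-1} = \pi_0 \pi^{-1} \pi_0^{-1}$. (One could equally work with the cleaner-looking $\pi \mapsto D_\pi(w_0)$ and observe that $\phi$ sends it to $\pi_0 \pi^{-1}$, so that $\phi$ already is an order-reversing bijection onto the image; I would pick whichever bookkeeping turns out shortest.)

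\textbf{Step 1: $\Psi$ is a bijection onto $[e,w_0]$.} Surjectivity is precisely Lemma~\ref{interval1}: every $v \le w_0$ has the form $D_\sigma(w_0)$ for some $\sigma \in S_{n-1}$, hence equals $\Psi(\pi_0^{-1}\sigma)$. For injectivity, suppose $\Psi(\pi) = \Psi(\pi')$. Applying $\phi$ and using Lemma~\ref{interval2} gives $\pi_0 \pi^{-1} \pi_0^{-1} = \pi_0 (\pi')^{-1} \pi_0^{-1}$, hence $\pi = \pi'$. So $\phi$ itself separates the points of the interval, and $|[e,w_0]| = (n-1)!$.

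\textbf{Step 2: $\Psi$ and $\Psi^{-1}$ are order-preserving.} Here I would use the rank function. By Lemma~\ref{t.rank_eq_inv}, $\rank(\Psi(\pi)) = \inv(\phi(\Psi(\pi))) = \inv(\pi_0\pi^{-1}\pi_0^{-1}) = \inv(\pi_0\pi^{-1}) \cdot$ — more carefully, $\inv(\pi_0 \sigma) = \binom{n-1}{2} - \inv(\sigma)$ and conjugation by $\pi_0$ preserves $\inv$, so $\rank(\Psi(\pi)) = \inv(\pi)$. Thus $\Psi$ is a rank-preserving bijection between two graded posets of the same size, each graded by a function matching the weak-order length. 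It remains to check that $\Psi$ sends covers to covers (then, being rank-preserving and bijective between graded posets, it is an isomorphism). A cover $\pi \lessdot \pi s_i$ in weak order on $S_{n-1}$ means $i \notin \D(\pi)$, i.e. $i \in \D(\pi_0\pi^{-1}\pi_0^{-1})$ after the appropriate translation — concretely $\D(\Psi(\pi))$ in the sense of $\phi$ contains the index that, via Observation~\ref{obs.down}, lets us apply a down operator $D_{i'}$ taking $\Psi(\pi)$ down to $\Psi(\pi s_i)$ or up, depending on orientation. I would spell out: applying $D_i$ (suitably indexed) to $\Psi(\pi s_i)$ yields $\Psi(\pi)$ when $i \in \D(\phi(\Psi(\pi s_i)))$, by Observation~\ref{obs.down} and the definition of $D_\pi$ via reduced words, so $\Psi(\pi) \lessdot \Psi(\pi s_i)$ in $\W(F_n)$; conversely every cover in $[e,w_0]$ is of this form by Lemma~\ref{interval1} and Corollary~\ref{t.Hasse}.

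\textbf{Main obstacle.} The delicate point is bookkeeping the two reindexings — the $\pi_0$-twist built into Lemma~\ref{interval2} and the $s_i \leftrightarrow$ descent correspondence between weak order on $S_{n-1}$ and the down operators — so that "cover goes to cover'' is verified in the correct direction without sign/inverse errors. The underlying mathematics is entirely supplied by Lemmas~\ref{interval1}, \ref{interval2}, \ref{t.rank_eq_inv} and Observation~\ref{obs.down}; the theorem is essentially their corollary, and the only real work is assembling them into a clean statement that $\Psi$ (or $\phi$ restricted to $[e,w_0]$) is the desired order isomorphism.
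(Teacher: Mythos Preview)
Your proposal is correct and follows essentially the same approach as the paper: both arguments combine Lemmas~\ref{interval1} and~\ref{interval2} to see that $\phi$ restricted to $[e,w_0]$ is a bijection onto $S_{n-1}$, and then use Lemma~\ref{t.rank_eq_inv} together with the behavior of the down operators to verify that covers correspond to covers. The only cosmetic difference is that you work with the inverse map $\Psi(\pi)=D_{\pi_0\pi}(w_0)$ while the paper argues directly with $\phi|_{[e,w_0]}$; the ingredients and logic are identical.
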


\begin{proof}
By Lemmas~\ref{interval1} and~\ref{interval2},
$\phi$ is onto $S_{n-1}$ even when restricted to a maximal interval $[e, w_0]$.


To prove that $\phi$  is also one-to-one when restricted to $[e, w_0]$,
it suffices to show that the cardinality of the interval $[e, w_0]$ 
does not exceed the cardinality of $S_{n-1}$;
and this follows again from Lemma~\ref{interval1}. 


Finally, to prove that $\phi$ is a poset isomorphism notice that,
for $v, w \in [e, w_0]$, if $\phi(v) = \phi(w) s_j$ then
either $\phi(v) = \phi(R_j(w))$ or $\phi(v) = \phi(L_j(w))$,
and therefore either $v = R_j(w)$ or $v = L_j(w)$.
The converse implications are clear.
Thus, for $v, w \in [e, w_0]$,
\begin{eqnarray*}
& & \hbox{\rm $w$ covers $v$ in $\W(F_n)$} \\
&\Longleftrightarrow& v \in \{R_j(w), L_j(w) :\ 1 \le j \le n-2\} \quad
      \hbox{\rm and} \quad \rank(w) = \rank(v) + 1 \\ 
&\Longleftrightarrow& \phi(v) \in \{\phi(w)s_j :\ 1 \le j \le n-2\} \quad
      \hbox{\rm and} \quad \inv(\phi(w)) = \inv(\phi(v)) + 1 \\
&\Longleftrightarrow& \hbox{\rm $\phi(w)$ covers $\phi(v)$ in $\W(S_{n-1})$}.
\end{eqnarray*}

\end{proof}


We can now summarize.

\begin{theorem}\
\begin{itemize}
\item[1.]
The $0$-Hecke algebra $\HHH_{n-1}(0)$ acts faithfully on $\W(F_n)$.
\item[2.]
Each lower interval is invariant under this action.
\end{itemize}
\end{theorem}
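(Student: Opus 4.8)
The plan is to deduce both parts of the theorem from the structural results already established for the down operators and the weak order. For part~1, I would argue that the family $\{D_i : 1 \le i \le n-2\}$ gives a representation of $\HHH_{n-1}(0)$ on the free $\QQ$-module $\QQ F_n$: Lemma~\ref{t.down-relations} already shows that these operators satisfy the defining relations, so the assignment $T_i \mapsto D_i$ extends to an algebra homomorphism $\HHH_{n-1}(0) \to \operatorname{End}_\QQ(\QQ F_n)$. The content of part~1 is that this homomorphism is \emph{injective}. The standard way to see faithfulness of a $0$-Hecke representation is to exhibit, for each element of a basis of $\HHH_{n-1}(0)$, a vector on which the corresponding operators act distinctly; here the natural basis of $\HHH_{n-1}(0)$ is $\{T_\pi : \pi \in S_{n-1}\}$, and the natural test vector is a maximal element $w_0 \in \W(F_n)$. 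By the definition of $D_\pi$ (well-defined by the Claim preceding Section~\ref{section:intervals}) together with Lemma~\ref{interval2}, we have $\phi(D_\pi(w_0)) = \pi_0 \pi^{-1}$, and since distinct $\pi$ give distinct values of $\pi_0\pi^{-1}$, the elements $D_\pi(w_0)$ are pairwise distinct in $F_n$. Hence the operators $D_\pi$ are pairwise distinct, and since $\{T_\pi\}$ is a basis of $\HHH_{n-1}(0)$ an arbitrary nonzero element $\sum_\pi c_\pi T_\pi$ maps to an operator which is nonzero already on $w_0$ — one should check the linear-independence of the vectors $\{D_\pi(w_0)\}$, which is immediate since they are distinct basis elements of $\QQ F_n$.

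For part~2, I would invoke Lemma~\ref{interval1}, which identifies the lower interval $[e,w]$ with $\{D_\pi(w) : \pi \in S_{n-1}\}$. Since each generator $D_i$ maps $[e,w]$ into itself — indeed $D_i(w') \le w' \le w$ for any $w' \in [e,w]$, because $D_i(w')$ is either $w'$ or an element covered by $w'$ in $\W(F_n)$ — the whole subalgebra generated by the $D_i$ preserves $[e,w]$, which is exactly the assertion that each lower interval is invariant under the $\HHH_{n-1}(0)$-action. (One should note here that ``invariant'' means the $\QQ$-span $\QQ[e,w] \subseteq \QQ F_n$ is a submodule; this is a direct consequence of $D_i([e,w]) \subseteq [e,w]$.)

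The main obstacle is establishing faithfulness cleanly, i.e.\ the injectivity claim in part~1: one must be careful that ``the operators $D_\pi$ are pairwise distinct'' upgrades to ``the linear map on $\HHH_{n-1}(0)$ is injective''. This is where the basis $\{T_\pi : \pi \in S_{n-1}\}$ of $\HHH_{n-1}(0)$ and the fact that the vectors $\{D_\pi(w_0)\}_{\pi \in S_{n-1}}$ are linearly independent in $\QQ F_n$ (being distinct standard basis vectors) do the work: the matrix of the operator $\sum_\pi c_\pi D_\pi$ acting on $w_0$ reads off the coefficients $c_\pi$ in the $D_\pi(w_0)$-coordinates, so it vanishes only if all $c_\pi = 0$. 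Everything else is bookkeeping with results already in hand (Lemmas~\ref{t.down-relations},~\ref{interval1},~\ref{interval2} and Theorem~\ref{t.main2}).
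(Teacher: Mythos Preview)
Your proposal is correct and follows essentially the same route as the paper: both use Lemma~\ref{t.down-relations} for the action, Lemma~\ref{interval1} for invariance of lower intervals, and the structure of a maximal interval for faithfulness. The only difference is cosmetic: the paper phrases faithfulness as a dimension count via Theorem~\ref{t.main2} (the maximal interval has size $(n-1)! = \dim \HHH_{n-1}(0)$), whereas you unpack this by invoking Lemma~\ref{interval2} directly to see that the $D_\pi(w_0)$ are pairwise distinct basis vectors of $\QQ F_n$ --- your version is in fact the more explicit of the two.
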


\begin{proof}
By Lemma~\ref{t.down-relations}, the down operators  $\{ D_i :\ 1 \le i \le n-2\}$ 
satisfy the defining relations of $\HHH_{n-1}(0)$. 
Furthermore, by  Lemma~\ref{interval1}, 
lower intervals are invariant under these operators.  
By Theorem~\ref{t.main2}, the size of each maximal interval is equal to 
the dimension of the $0$-Hecke algebra, so that the action is faithful.

\end{proof}

\section{Number of maximal elements}
\label{section:proofs}

In this section we prove

\begin{theorem}\label{t.main3}
The number of maximal elements in $\W(F_n)$ is the Catalan number
$$
C_{n-1}=\frac{1}{n}{2n-2\choose n-1}.
$$
\end{theorem}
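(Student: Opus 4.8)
The plan is to characterize the maximal elements of $\W(F_n)$ and count them via the Goulden--Yong description in Proposition~\ref{GY22}. By Lemma~\ref{t.rank_eq_inv}, $w \in F_n$ is maximal in $\W(F_n)$ if and only if $\inv(\phi(w))$ is maximal, and since $R_j$, $L_j$ change $\phi$ by a right multiplication by $s_j$ (Lemma~\ref{t.main-lemma}), maximality forces $\phi(w) = \pi_0 = [n-1,\ldots,1]$, the longest element. Thus the number of maximal elements equals $|\phi^{-1}(\pi_0)|$. I would first prove that $\phi^{-1}(\pi_0)$ is nonempty and then show $\phi$ restricted to the set of maximal words is still a bijection onto something of size $C_{n-1}$ --- but in fact $\phi$ is \emph{not} injective on all of $F_n$ (only on maximal \emph{intervals}, by Theorem~\ref{t.main2}), so the count must come from elsewhere.

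The cleaner route is purely combinatorial: a maximal element is a word $w = (t_1,\ldots,t_{n-1}) \in F_n$ such that $\phi(w) = \pi_0$, i.e.\ $\phi(w)(j) = n-j$ for all $j$. Using Lemma~\ref{t.inversions}, the condition ``$\phi(w)(j) > \phi(w)(k)$ for \emph{all} $j < k$'' translates into a strong structural constraint on the pair $(t_j, t_k)$ for every $j < k$: commuting pairs $((a,b),(c,d))$ with $a<b<c<d$ are forbidden (they give a non-inversion by Lemma~\ref{phi.local-range}), and among non-commuting pairs only $((a,c),(a,b))$ and $((b,c),(a,c))$ are allowed (the ``cyclically decreasing'' shapes), while the nested commuting pairs $((a,d),(b,c))$ and $((b,c),(a,d))$ are allowed only in one of the two sub-cases of Lemma~\ref{phi.local-range-2}. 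I would show that these constraints, together with the tree and non-crossing conditions of Proposition~\ref{GY22}, force the underlying non-crossing tree $G(w)$ to be exactly one of the $C_{n-1}$ non-crossing trees on $n$ vertices on a circle, \emph{and} that each non-crossing tree admits exactly one ordering of its edges into a maximal word. The first half (each maximal word gives a non-crossing tree, and the tree determines $w$) should follow by induction on $n$: locate the edge incident to vertex $n$ along the path from some $i$ to $i+1$, peel it off as $t_{n-1}$ or use the recursive structure from the proof of Lemma~\ref{t.unique_id}, and reduce to $F_{n-1}$.

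Concretely, I would argue as follows. Given a non-crossing tree $\Gamma$ on vertices $1,\ldots,n$ arranged on a circle, define a canonical linear order on its $n-1$ edges and check that the resulting word lies in $F_n$ (verify conditions (i)--(iii) of Proposition~\ref{GY22}) and has $\phi(w) = \pi_0$ (verify, via Lemma~\ref{t.inversions}, that every pair $j<k$ is an inversion). A natural candidate for the canonical order: orient each edge from its smaller to its larger endpoint and sort edges by, say, decreasing smaller endpoint with ties broken appropriately, or use a DFS-type order rooted suitably; the precise rule should be forced by the three allowed pair-patterns above. Conversely, given a maximal word $w$, its graph $G(w)$ is a non-crossing tree by Proposition~\ref{GY22}, and the inversion constraint pins down the edge order uniquely, so $w \mapsto G(w)$ is a bijection from maximal elements to non-crossing trees on $n$ vertices. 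Since the number of non-crossing trees on $n$ vertices on a circle is the Catalan number $C_{n-1}$ (a classical fact; see e.g.\ the references to Kreweras or standard Catalan lore), the theorem follows.

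The main obstacle is the uniqueness half: showing that a given non-crossing tree has \emph{exactly one} ordering of its edges realizing a maximal element. This is where Lemma~\ref{t.inversions}, and especially the subtle $T_a$ versus $T_d$ dichotomy of Lemma~\ref{phi.local-range-2}, must be used carefully --- for each pair of edges the tree structure already forces which of the two ``nested'' patterns is allowed, hence forces the relative order of those two edges, and one must check these pairwise constraints are consistent and together determine a total order (i.e.\ they define a partial order with a unique linear extension). An alternative to sidestep this: combine Theorem~\ref{t.main2} (each maximal interval $[e,w_0]$ maps bijectively to $S_{n-1}$ under $\phi$) with a double-counting of the incidences between maximal elements and the words covered in rank $n-2$, but the direct non-crossing-tree bijection is the most transparent and is almost certainly the authors' intended argument given the opening remark that $|F_n|$ counts labeled trees.
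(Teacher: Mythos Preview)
Your opening reduction is right: by Lemmas~\ref{t.rank_eq_inv} and~\ref{t.main-lemma}, maximality in $\W(F_n)$ is equivalent to $\phi(w)=\pi_0$, so you are counting $|\phi^{-1}(\pi_0)|$. But the bijection you propose cannot work as stated, because its target set is wrong. The number of non-crossing trees on $n$ labelled points in convex position is \emph{not} the Catalan number $C_{n-1}$; it is the ternary-type number $\frac{1}{2n-1}\binom{3n-3}{n-1}$, which for $n=4$ gives $12$, not $C_3=5$. Concretely, the tree with edges $\{1,2\},\{2,3\},\{3,4\}$ (that is, $G(e)$) is non-crossing but corresponds to the minimum $e$, not to any maximal element. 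So the map $w\mapsto G(w)$ from maximal elements to non-crossing trees is far from surjective, and your ``one ordering per non-crossing tree'' claim is false.

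The paper's proof is quite different: it derives a recursion for $N(\pi):=|\phi^{-1}(\pi)|$ by analyzing how the first factor $t_1=(i,j)$ splits the cycle $c$ into two disjoint cycles, which splits the remaining word into a shuffle of two shorter factorizations (Lemma~\ref{t.cover-number}). Specializing to $\pi=\pi_0$ yields $N_n=\sum_{i=1}^{n-1}N_i N_{n-i}$, the Catalan recursion. If you want a bijective proof, the paper \emph{does} give one later (Theorem~\ref{t.max_eq_alt}), but the image of $w\mapsto G(w)$ on maximal elements is the set of \emph{alternating} non-crossing trees --- those in which, along every path, consecutive vertices alternate up/down --- and it is this subclass, not all non-crossing trees, that is counted by $C_{n-1}$. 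The alternating condition is exactly what Lemma~\ref{t.inversions} forces when you rule out the non-inverting pair $((a,b),(b,c))$.
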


For a $q$-analogue of this result see Corollary~\ref{t.main3-q} below.

\bigskip

Recall the map $\phi: F_n\rightarrow S_{n-1}$ defined in Section~\ref{section:map}.

\begin{notation}
For $\pi\in S_{n-1}$, let
\[
N(\pi) := \left|\{w\in F_n\,|\,\phi(w) = \pi\}\right|.
\]
\end{notation}

For a subset $S$ of $\{1,\ldots,n-1\}$
let $\pi|_S$ be the subsequence of $(\pi(1),\ldots,\pi(n-1))$
consisting only of the values in $S$,
and let $p(\pi|_S)$ be the pattern (permutation) obtained by
mapping $S$ monotonically to $\{1, \ldots, |S|\}$, namely:
the sequence obtained from $\pi|_S$ by 
writing $1$ instead of the smallest number in $S$,
$2$ instead of the second smallest number, etc.
For example, if $\pi = 3572146 \in S_7$ and $S = \{1,2,6,7\}$
then $\pi|_S = 7216$ and $p(\pi|_S) = 4213$.

Also, for $1\le i<j\le n$, let
\[
A(i,j) := \{i, \ldots, j-2\} 
\]
(of size $j-i-1\ge 0$) and
\[
B(i,j) := \{1, \ldots, i-1\} \cup \{j, \ldots, n-1\} 
\]
(of size $n-j+i-1\ge 0$). Thus $A(i,j) \cup B(i,j) = \{1,\ldots,n\}\setminus \{j-1,n\}$,
a disjoint union.

\begin{lemma}\label{t.cover-number}
Let $\pi\in S_{n-1}$ 
such that $\pi(1) = j - 1$ $(2 \le j \le n)$. Then
\[
N(\pi) = \sum_{1\le i<j} N(p(\pi|_{A(i,j)})) \cdot N(p(\pi|_{B(i,j)})).
\]
\end{lemma}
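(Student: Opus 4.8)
The plan is to analyze a word $w = (t_1,\ldots,t_{n-1}) \in F_n$ with $\phi(w) = \pi$ by looking at the first transposition $t_1$. By Lemma~\ref{phi.local-range}, if $t_1 = (a,b)$ with $a < b$ then $\phi(w)(1) = a$, so the hypothesis $\pi(1) = j-1$ forces $a = j-1$; write $b = i$ with... no — rather, $t_1 = (j-1, i)$ for some $i$, and since $a < b$ this is consistent once we allow $t_1$ to join vertex $j-1$ to any other vertex. Actually the cleaner split uses Observation~\ref{obs.phi}: $\phi(w)(1) = \sigma_2^{-1}(\min t_1)$. I would instead parametrize by the edge $e_1 = \{j-1, ?\}$ of the tree $G(w)$ incident to the distinguished behavior at position $1$; the relevant index $i$ with $1 \le i < j$ will be the ``other half'' determining the partition of $\{1,\ldots,n\}\setminus\{j-1,n\}$ into the two blocks $A(i,j)$ and $B(i,j)$. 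The first step, then, is to pin down exactly which pairs $(i,j)$ can occur as data of $t_1$ and $\sigma_2$, using Lemma~\ref{phi.local-range} and the tree/non-crossing structure of Proposition~\ref{GY22}.

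Next, the key structural step: removing $t_1$ from $w$ should decompose the remaining word $(t_2,\ldots,t_{n-1})$ into two independent words supported on complementary vertex sets. Concretely, $G(w)$ is a non-crossing tree on $\{1,\ldots,n\}$ (cyclically arranged), and deleting the edge $t_1 = \{j-1,i\}$ (or whatever the correct endpoints are — this is where I must be careful with the cyclic order $<_a$) leaves two subtrees. The non-crossing condition forces one subtree to be supported on a set of ``consecutive'' circle-vertices and the other on its complement; after relabeling these become subtrees of $NC$-lattices of smaller rank, and the factorization $t_1 \cdots t_{n-1} = (1,2,\ldots,n)$ restricts to the appropriate Coxeter elements on each part. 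The index $i$ is precisely the parameter recording where the split occurs, so that the two pieces live on vertex sets corresponding to $A(i,j)$ and $B(i,j)$ — whose sizes $j-i-1$ and $n-j+i-1$ sum to $n-2$, matching the $n-2$ remaining transpositions. I would then check that $\phi$ is compatible with this decomposition: the values $\phi(w)(2),\ldots,\phi(w)(n-1)$, read off in the two blocks, give exactly $\phi$ of the two smaller words, whose patterns are $p(\pi|_{A(i,j)})$ and $p(\pi|_{B(i,j)})$. This compatibility is really a consequence of Observation~\ref{obs.phi} together with the fact that the $\sigma_k$ for $k \ge 2$ act blockwise.

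With the bijection $\{w : \phi(w) = \pi\} \;\longleftrightarrow\; \coprod_{1 \le i < j} \{w' : \phi(w') = p(\pi|_{A(i,j)})\} \times \{w'' : \phi(w'') = p(\pi|_{B(i,j)})\}$ established, the stated identity for $N(\pi)$ follows by counting. The main obstacle I anticipate is the bookkeeping in the second step: verifying that deleting $t_1$ genuinely yields two well-formed elements of smaller $F$'s (conditions (i)--(iii) of Proposition~\ref{GY22} must be inherited, especially the cyclic-decreasing-neighbors condition (iii) and the claim that each block carries the standard Coxeter element after relabeling), and that the parameter ranges match up exactly so that every pair $(i,j)$ with $1 \le i < j$ is hit once and only once. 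I would handle this by arguing that $G(w)$ minus the edge at position $1$ has its two components separated by the chord, invoking non-crossingness to show each component's vertex set is an interval on the circle, and then checking the Coxeter-element restriction directly from $t_1 \cdots t_{n-1} = c$. The pattern-standardization $p(\cdot)$ is exactly what absorbs the relabeling, so once the blockwise compatibility of $\phi$ is in hand, the rest is routine.
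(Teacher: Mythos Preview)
Your overall strategy --- delete $t_1$, split the remaining word into two smaller factorizations on complementary vertex sets, verify $\phi$-compatibility, and count --- is exactly the paper's approach. The paper phrases the split via the two-cycle decomposition of $\sigma_2$ rather than via tree components of $G(w)$, but these are equivalent.

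There is, however, a concrete error in your identification of $t_1$. You claim, invoking Lemma~\ref{phi.local-range}, that if $t_1 = (a,b)$ with $a < b$ then $\phi(w)(1) = a$, forcing $a = j-1$. But that lemma only gives $a \le \phi(w)(1) < b$. The correct tool is the one you mention next without computing: Observation~\ref{obs.phi} gives $\phi(w)(1) = \sigma_2^{-1}(a)$, and since $\sigma_2 = t_1 c = (a,b)(1,\ldots,n)$ one finds $\sigma_2(b-1) = a$, hence $\phi(w)(1) = b - 1$. So it is the \emph{larger} endpoint that is pinned down: $b = j$, and the free parameter is $a = i \in \{1,\ldots,j-1\}$. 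Thus $t_1 = (i,j)$, not $(j-1,?)$ as you repeatedly write. The chord $\{i,j\}$ then separates the circle into the arcs $\{i,\ldots,j-1\}$ and $\{j,\ldots,n,1,\ldots,i-1\}$; the transpositions $t_2,\ldots,t_{n-1}$ are each supported on one arc (this is where $\sigma_2 = (i,\ldots,j-1)(j,\ldots,n,1,\ldots,i-1)$ enters), and their $\phi$-values fill exactly $A(i,j) = \{i,\ldots,j-2\}$ and $B(i,j) = \{1,\ldots,i-1\} \cup \{j,\ldots,n-1\}$ respectively. With this correction the rest of your plan goes through and matches the paper.
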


\begin{example}
For $\pi = 41352 \in S_5$, $j-1 = \pi(1) = 4$.
Thus, summing over $1\le i\le 4$:
\begin{eqnarray*}
N(41352) &=& N(p(132))N(p(5)) + N(p(32))N(p(15)) + N(p(3))N(p(152)) +\\
                 & & N(p())N(p(1352))\\
         &=& N(132)N(1) + N(21)N(12) + N(1)N(132) + N()N(1342).
\end{eqnarray*}
\end{example}

\begin{proof}
Assume that $w\in F_n$ satisfies $\phi(w)(1) = j-1$.
Then the first transposition in $w$ has the form $t_1 = (i,j)$ for
some $i<j$, and the corresponding partial product 
$\sigma_2 = t_2 \cdots t_{n-1}$ is $(i,i+1,\ldots,j-1)(j,j+1,\ldots,n,1,\ldots,i-1)$.
In general, for any $1 \le k \le n-1$, $\sigma_k$ is a product of 
$k$ disjoint cycles, each of which forming a (not necessarily consecutive) 
cyclic subsequence of $\sigma_1 = (1, \ldots, n)$, and 
the transition from $\sigma_{k+1}$ to $\sigma_k$ amounts to
merging two of the cycles of $\sigma_{k+1}$ into one cycle
in which the original cycles form complementary cyclic intervals.
It follows that $(i,i+1,\ldots,j-1)$ is the product of those $t_k$
with $k>1$ which permute two elements of the set
$\{i,i+1,\ldots,j-1\}$, and thus have $\phi(w)(k)$ in this set
but of course $\phi(w)(k)\ne \phi(w)(1) = j-1$; and, similarly,
$(j,j+1,\ldots,n,1,\ldots,i-1)$ is the product of those $t_k$ with
$k>1$ which permute two elements of the set
$\{j,j+1,\ldots,n,1,\ldots,i-1\}$, and thus have $\phi(w)(k)$ in
this set but of course $\phi(w)(k)\ne n$. 
The word $w$ is a shuffle of these two sequences of transpositions, 
with the addition of $t_1 = (i,j)$. The $\phi$ values at these two sequences 
compose the ranges $A(i,j)$ and $B(i,j)$, respectively. 
Mapping these ranges monotonically to the ranges 
$\{1,\ldots,j-i-1\}$ and $\{1,\ldots,n-j+i-1\}$ 
we get $w_1 \in F_{j-i}$ and $w_2 \in F_{n-j+i}$ 
such that, for $\pi := \phi(w)$, 
$\phi(w_1) = p(\pi|_{A(i,j)})$ and $\phi(w_2) = p(\pi|_{B(i,j)})$.
Conversely, for any $1 \le i < j \le n$, $w_1 \in F_{j-i}$ and $w_2 \in F_{n-j+i}$
we can construct $w \in F_n$ by reversing the above process.
This completes the proof of the formula.

\end{proof}


\medskip

\noindent {\it Proof of Theorem~\ref{t.main3}.}
Let $\pi_0$ be the longest element in $S_{n-1}$, i.e., $\pi_0(i) = n-i$ $(1\le i\le n-1)$;
denote $N_n := N(\pi_0)$.
If $w\in F_n$ has $\phi(w) = \pi_0$ then $j = 1 + \pi_0(1) = n$ and therefore,
for each $1\le i\le n-1$,  $A(i,j) = \{i,\ldots,n-2\}$ and $B(i,j) = \{1,\dots,i-1\}$.
Thus $\pi_0|_{A(i,j)}$ and $\pi_0|_{B(i,j)}$ are decreasing sequences, and
the corresponding permutations $p(\pi_0|_{A(i,j)})$ and
$p(\pi_0|_{B(i,j)})$ are the longest elements in $S_{n-i-1}$ and
$S_{i-1}$, respectively. It follows, by Lemma~\ref{t.cover-number}, that
\[
N_n = \sum_{i=1}^{n-1} N_{n-i}N_{i}\qquad(\forall n\ge 2),
\]
and clearly $N_1 = 1$.
We recognize the recursion formula for the Catalan numbers $C_{n-1}$,
and therefore
\[
N_n = C_{n-1} = \frac{1}{n}{2n-2 \choose n-1} \qquad (n \ge 1),
\]
as claimed.


\qed



%
%
%

\section{Alternating non-crossing trees}
\label{section:alternating_trees}

The current section presents a bijection between the maximal elements in $\W(F_n)$
and alternating non-crossing trees, following a comment by an anonymous referee.
Recall that a tree with integer-labeled vertices is {\em alternating}
if the vertices along any path in it form an alternating sequence: $\ldots > i < j > k < \ldots$;
equivalently, if the neighbors of each vertex $i$ are all larger, or all smaller, than $i$.

\begin{observation}\label{t.max_to_alt}
If $v$ is a maximal element in $\W(F_n)$ then the corresponding non-crossing tree $G(v)$
(as in Proposition~\ref{GY22}) is alternating.
\end{observation}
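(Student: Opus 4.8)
The plan is to argue by contradiction, using the characterization of inversions of $\phi(v)$ from Lemma~\ref{t.inversions} together with maximality. Suppose $v = (t_1,\ldots,t_{n-1})$ is maximal in $\W(F_n)$ but $G(v)$ is not alternating. Then some vertex $a$ has two neighbors, one larger and one smaller: there are reflections $t_j = (a,c)$ (with $a < c$) and $t_k = (a',a)$ (with $a' < a$) among the $t_i$, i.e.\ $a$ is an endpoint of one edge going ``up'' and another going ``down.'' I would first reduce, by relabeling, to the situation $a' < a < c$ and ask whether $j < k$ or $k < j$; in either order I want to produce an \emph{ascent} of $\phi(v)$ at some position, i.e.\ a pair of positions $p < q$ with $\phi(v)(p) < \phi(v)(q)$, because by Lemma~\ref{t.rank_eq_inv} an ascent of $\phi(v)$ lets one apply $R_p$ or $L_p$ (via Lemma~\ref{t.main-lemma}) to strictly increase the rank, contradicting maximality. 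In fact it suffices to show $\phi(v)$ is the longest element $\pi_0$ of $S_{n-1}$ would be the only obstruction-free case, so equivalently I want: if $G(v)$ has a vertex with both a smaller and a larger neighbor, then $\phi(v) \neq \pi_0$, i.e.\ $\phi(v)$ has an ascent among \emph{consecutive} positions $i, i+1$ — but actually any ascent suffices since one can also argue by first bubbling down, so I will aim directly for an ascent.

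The key computation is to pin down $\phi(v)(j)$ and $\phi(v)(k)$ for the two edges $(a,c)$ and $(a',a)$ at the shared vertex $a$. By Lemma~\ref{phi.local-range}, $\phi(v)(j) \in [a, c)$ and $\phi(v)(k) \in [a', a)$, so automatically $\phi(v)(k) < a \le \phi(v)(j)$: that is, the position carrying the smaller-neighbor edge has strictly smaller $\phi$-value than the position carrying the larger-neighbor edge, \emph{regardless of which comes first in the word}. So if $k < j$ we already have an ascent at the pair $(k, j)$, and we are done. The remaining case is $j < k$ with $t_j = (a,c)$, $t_k = (a',a)$, $a' < a < c$, and here $\phi(v)(j) > \phi(v)(k)$ — this is precisely an \emph{inversion}, consistent with Lemma~\ref{t.inversions}(1)(i) applied with $(a',a,c)$ in the roles of $(a,b,c)$. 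So this case does not immediately contradict maximality, and here is where more work is needed: I must find a \emph{different} ascent elsewhere in $v$, or derive a contradiction from the non-crossing tree structure.

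For that remaining case I would exploit that $G(v)$ is a non-crossing tree (Proposition~\ref{GY22}) plus the cyclically-decreasing-neighbors condition (iii). The idea: consider the edge $\{a', a\}$; since $a' < a < c$ and $\{a,c\}$ is also an edge, the non-crossing condition constrains where the rest of the tree attached ``between'' $a'$ and $a$, or between $a$ and $c$, can go, and condition (iii) forces a specific order on edges incident to $a$. I expect to show that the path in $G(v)$ realizing the step from some $i$ to $i+1$ (as in the proof of Lemma~\ref{phi.local-range}) must, for a suitable $i$ strictly between $a'$ and $a$, use the edge $\{a',a\}$ as its unique \emph{increasing} step followed only by decreasing edges, while a comparison with the increasing step forced for a value $i'$ near $a$ gives two positions with $\phi$-values out of inversion order. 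Concretely I would try to locate a position $p$ with $\phi(v)(p) = a - 1$ and another position $q > p$ with $\phi(v)(q) \ge a$, forced by the tree being spanning and non-crossing; that yields the needed ascent. The main obstacle is exactly this last step — extracting a guaranteed ascent purely from ``some vertex has neighbors on both sides'' when the two offending edges happen to appear in the inversion-compatible order — and I anticipate the cleanest route is induction on $n$: delete a leaf of $G(v)$ (equivalently, peel off $t_{n-1} = (n-1,n)$-type extremal edge as in the proof of Lemma~\ref{t.unique_id}) to reduce to a maximal element of $\W(F_{n-1})$, check that the ``both-sided neighbor'' defect survives the deletion (or is created only at the deleted leaf, which is easy to handle directly), and invoke the inductive hypothesis. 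I would present the induction as the primary argument and use the local $\phi$-value bounds above as the base-case / leaf-handling tool.
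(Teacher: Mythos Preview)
Your overall strategy---show that a non-alternating vertex forces a non-inversion (ascent) in $\phi(v)$, contradicting maximality---is exactly the paper's strategy. But you have misidentified the ``hard case'' and as a result embarked on an unnecessary induction.

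Here is the concrete slip. With $a' < a < c$ and edges $t_j=(a,c)$, $t_k=(a',a)$ sharing the \emph{middle} value $a$, the case $j<k$ does \emph{not} match Lemma~\ref{t.inversions}(1)(i): substituting $(a',a,c)$ for $(a,b,c)$ there gives $t_j=(a',c)$, $t_k=(a',a)$, with shared \emph{smallest} value---not your configuration. In fact your ``remaining case'' $j<k$ with $(t_j,t_k)=((a,c),(a',a))$ is simply impossible: by Corollary~\ref{t.avoid} (equivalently, condition~(iii) of Proposition~\ref{GY22}), whenever two non-commuting factors share their middle value the only admissible order is $((a',a),(a,c))$. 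So the only case is $k<j$, which you have already dispatched via Lemma~\ref{phi.local-range}: $\phi(v)(k)<a\le\phi(v)(j)$ gives an ascent, hence $\phi(v)\ne\pi_0$, contradicting maximality (Lemma~\ref{t.rank_eq_inv} together with Lemma~\ref{t.main-lemma}, or directly Lemma~\ref{interval2}). No induction is needed.

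For comparison, the paper's proof is the same argument packaged slightly differently: it first quotes Lemma~\ref{interval2} (with $\pi=id$) to get $\phi(v)=\pi_0$, so every pair of positions is an inversion; then observes that a non-alternating vertex $b$ with neighbors $a<b<c$ forces, by Corollary~\ref{t.avoid}, the pair $(t_j,t_k)=((a,b),(b,c))$ with $j<k$, which by Lemma~\ref{t.inversions} is \emph{not} an inversion---contradiction. Your use of Lemma~\ref{phi.local-range} in place of the inversion characterization is a fine alternative, but the key you missed is that Corollary~\ref{t.avoid} already eliminates the case you were worried about.
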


\begin{proof}
If $v$ is maximal in $\W(F_n)$ then, by Lemma~\ref{interval2} for $\pi = id$,
$\phi(v) = \pi_0 = [n-1, \ldots, 1]$ and thus all pairs of letters in $\phi(v)$ are inversions.
For $a < b < c$, the pair $((a,b), (b,c))$  does {\em not} correspond to an inversion in $\phi(v)$,
by the characterization in Lemma~\ref{t.inversions}. This implies that $G(v)$ is alternating.

\end{proof}

The number of all alternating non-crossing trees on the set of vertices $[n] = \{1, \ldots, n\}$
was shown~\cite[Theorem 6.4]{GGP} to be the Catalan number $C_{n-1}$.
This motivates the following result.

\begin{theorem}\label{t.max_eq_alt}
The mapping $v \mapsto G(v)$ is a bijection between the set of all maximal elements in $\W(F_n)$ and
the set of all alternating non-crossing trees on the set of vertices $[n]$.
\end{theorem}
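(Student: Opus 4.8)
The plan is to promote Observation~\ref{t.max_to_alt} to a bijection by constructing an explicit inverse map and checking that the two compositions are the identity. By Observation~\ref{t.max_to_alt} the mapping $v \mapsto G(v)$ sends maximal elements of $\W(F_n)$ to alternating non-crossing trees on $[n]$, so it remains to show this mapping is injective and surjective onto that target set. Since Theorem~\ref{t.main3} gives exactly $C_{n-1}$ maximal elements, and \cite[Theorem 6.4]{GGP} gives exactly $C_{n-1}$ alternating non-crossing trees on $[n]$, it in fact suffices to prove \emph{either} injectivity \emph{or} surjectivity; I would aim for injectivity, as it is the more structural of the two and the cardinality count then closes the argument.

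For injectivity, suppose $v = (t_1, \ldots, t_{n-1})$ and $v' = (t'_1, \ldots, t'_{n-1})$ are both maximal in $\W(F_n)$ with $G(v) = G(v')$; I must show $v = v'$, i.e. that the order of the edges is forced by the underlying alternating non-crossing tree. The key point is that, since $v$ is maximal, $\phi(v) = \pi_0 = [n-1, \ldots, 1]$ by Lemma~\ref{interval2} applied with $\pi = id$, so \emph{every} pair $j < k$ is an inversion of $\phi(v)$. By Lemma~\ref{t.inversions} this means that for every $j < k$ the pair $(t_j, t_k)$ must fall into one of the listed inversion patterns; equivalently, the forbidden configurations are exactly those that would make some pair a non-inversion — most notably the pattern $((a,b),(b,c))$ with $a<b<c$ (a ``cherry'' opening upward in the wrong order) and the commuting pattern $((a,b),(c,d))$ with $a<b<c<d$. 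So the ordering of the edges of the fixed tree $G(v)$ is subject to: (a) among two edges sharing a vertex $a$ as their \emph{smaller} endpoint, say $(a,b)$ and $(a,c)$ with $b < c$, the edge $(a,c)$ must come first (this is the cyclically-decreasing-neighbors condition of Proposition~\ref{GY22}(iii) specialized to an alternating tree, where every edge's lower endpoint is a ``valley''); (b) for a pair of edges $(a,b)$ and $(c,d)$ with $a<b<c<d$ that are vertex-disjoint, the edge $(c,d)$ (the one farther out) must precede $(a,b)$; and (c) for nested disjoint edges $(a,d) \supset (b,c)$, Lemma~\ref{phi.local-range-2} together with Lemma~\ref{t.inversions}(3) forces which of the two comes first according to which side of the deleted edge $\{a,d\}$ the edge $\{b,c\}$ lies on. The claim is that these constraints, taken together over all pairs, determine a \emph{unique} linear order of the $n-1$ edges. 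To see this I would argue that the relation ``$e$ must precede $e'$'' is a total order: any two edges of a non-crossing tree are either vertex-sharing, nested-disjoint, or side-by-side-disjoint, and in each case exactly one of the two must precede the other by (a)–(c); one then checks this relation is transitive (equivalently acyclic), which follows because in an alternating non-crossing tree one can read off a global ordering — e.g. by a suitable traversal pairing each valley with its incident edges in decreasing order of the other endpoint — that is consistent with all the local constraints. Transitivity forces $v = v'$.

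The main obstacle is exactly this last point: showing the pairwise ``must-precede'' relation is acyclic, i.e. that the local constraints (a), (b), (c) are globally consistent and pin down a single word. A clean way to dispatch it is to produce, from any given alternating non-crossing tree $\Gamma$ on $[n]$, an explicit word $w_\Gamma = (t_1, \ldots, t_{n-1})$ whose edge set is $\Gamma$ and which satisfies all three conditions of Proposition~\ref{GY22}, with $\phi(w_\Gamma) = \pi_0$ — for instance, order the edges by repeatedly peeling off, at the currently largest-labeled leaf or by a canonical recursive cut, ensuring the cyclically-decreasing-neighbors property holds; this simultaneously establishes surjectivity (so one needn't even invoke the cardinality count) and, combined with the rigidity argument of the previous paragraph, injectivity. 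I would present the surjective construction first, verify it lands among maximal elements via Lemma~\ref{interval2} (check $\phi = \pi_0$) and Proposition~\ref{GY22}, then observe that the rigidity argument shows no two distinct maximal elements share an edge set, completing the bijection; alternatively, invoke \cite[Theorem 6.4]{GGP} and Theorem~\ref{t.main3} to get the count for free once either direction is in hand.
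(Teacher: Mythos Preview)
Your proposal is correct in spirit and ultimately lands on the same idea as the paper, but by a longer route. The paper proceeds directly to surjectivity: it constructs an explicit inverse $g$ recursively, using the observation that every alternating non-crossing tree on $[n]$ (with $n \ge 2$) must contain the edge $\{1,n\}$ (the path from $1$ to $n$ is monotone by non-crossing, hence of length $1$ by alternating). Deleting this edge splits the tree into components on $[1,k]$ and $[k+1,n]$; recursion gives words $v_1, v_n$, and one sets $g(G) := v_n * (1,n) * v_1$. After checking via Proposition~\ref{GY22} and Lemma~\ref{t.inversions} that $g(G)$ is a maximal element of $\W(F_n)$ with $G(g(G)) = G$, the cardinality match (Theorem~\ref{t.main3} and \cite[Theorem 6.4]{GGP}) finishes the proof.

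Your injectivity-first approach --- extracting from Lemma~\ref{t.inversions} a binary ``must-precede'' relation on the edges and arguing it is a total order --- is a reasonable alternative framing, but the crucial step (acyclicity/transitivity) is left open, and your proposed remedy, an explicit construction of $w_\Gamma$, is precisely what the paper does from the outset. So the constraint analysis becomes extra scaffolding that the paper bypasses. Two small points on that scaffolding: your rule (a) only covers edges sharing their \emph{smaller} endpoint (a valley), omitting the symmetric peak case $(b,c),(a,c)$ with $a<b<c$, which is governed by Lemma~\ref{t.inversions}(1)(ii); and your ``peel off the largest-labeled leaf'' suggestion is less canonical than the paper's split-at-$\{1,n\}$ recursion, which dovetails neatly with the Catalan recursion in the proof of Theorem~\ref{t.main3} and with the refined counts in Section~9.
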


\begin{proof}
By Observation~\ref{t.max_to_alt}, this mapping, to be denoted $f$, is well defined.
Let us construct a map $g$ in the other direction.

For an alternating non-crossing tree $G$ on $[n]$, defining $v = g(G)$ 
amounts to  imposing a linear order on the $n-1$ edges of $G$.
This can be done recursively as follows: If $n = 1$, there are no
edges and the order is empty. For $n \ge 2$ note that $\{1,n\}$ must
be an edge of $G$ (since the path from $1$ to $n$ in $G$ is
monotone increasing, because of the non-crossing property, and
thus is also alternating if and only if it consists of a single
edge). The graph obtained from $G$ by deleting the edge $\{1,n\}$
has two connected components, $G_1$ (containing the vertex $1$)
and $G_n$ (containing the vertex $n$). They are alternating
non-crossing trees on complementary subsets $[1,k]$ and
$[k+1,n]$ of $[n]$, for a suitable $1 \le k \le n-1$. They define,
by recursion, corresponding words $v_1 = g(G_1)$ and $v_n =
g(G_n)$ (using the same letter $g$ for the two corresponding maps). 
Define the word $v = g(G)$ as the word $v_n$, followed by
the letter $(1,n)$, followed by the word $v_1$.

We need to show that $v$ is a maximal element of $\W(F_n)$.
First, $v \in F_n$ since $G = G(v)$ satisfies the Goulden-Yong conditions of Proposition~\ref{GY22},
the cyclic order on neighbors following recursively from the construction.
Maximality of $v$ in $\W(F_n)$ follows from the fact that all pairs in the permutation $\phi(v)$
are inversions, by a careful application of Lemma~\ref{t.inversions}.

Having defined the map $g$, it is clear that the composition $f \circ g$ is
the identity map on the set of all alternating non-crossing trees.
Thus $f$ is surjective, and since both sets have the same (Catalan) size
it follows that $f$ is a bijection.

\end{proof}

\begin{corollary}\label{1n}
Every maximal element in $\W(F_n)$ contains $(1,n)$ as a factor.
\end{corollary}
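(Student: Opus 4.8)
\textbf{Proof proposal for Corollary~\ref{1n}.}
The plan is to deduce this immediately from the bijection established in Theorem~\ref{t.max_eq_alt} together with the structural observation already made inside its proof. First I would recall that, by Theorem~\ref{t.max_eq_alt}, every maximal element $v$ of $\W(F_n)$ is of the form $v = g(G(v))$ for an alternating non-crossing tree $G(v)$ on $[n]$; equivalently, $G(v)$ is alternating and non-crossing by Observation~\ref{t.max_to_alt}. The key fact is the one used in the recursion of that proof: for an alternating non-crossing tree on $[n]$ with $n \ge 2$, the pair $\{1,n\}$ must be an edge. Indeed, by the non-crossing property the unique path in the tree from $1$ to $n$ is monotone increasing, hence it is alternating only if it consists of the single edge $\{1,n\}$.

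So the argument is: given a maximal element $v \in \W(F_n)$ (with $n \ge 2$, the case $n = 1$ being vacuous), the tree $G(v)$ is alternating non-crossing, hence contains the edge $\{1,n\}$; therefore the transposition $(1,n)$ appears among the edges of $G(v)$, i.e.\ $(1,n)$ is one of the factors $t_1, \ldots, t_{n-1}$ of $v$. That is exactly the assertion.

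There is essentially no obstacle here, since the monotone-path observation has already been proven in the body of Theorem~\ref{t.max_eq_alt}; the only thing to be careful about is to phrase the corollary as a direct consequence rather than re-deriving the bijection. One could alternatively give a ``hands-on'' proof avoiding the tree language: if $v = (t_1, \ldots, t_{n-1})$ is maximal then $\phi(v) = \pi_0$ by Lemma~\ref{interval2} (case $\pi = id$), so by Lemma~\ref{t.inversions} every pair $(t_j, t_k)$ with $j < k$ is an inversion, which forbids the patterns $((a,b),(b,c))$; combined with Proposition~\ref{GY22} this forces $G(v)$ to be alternating, and then the path argument applies. But the cleanest route is simply to invoke Theorem~\ref{t.max_eq_alt} and Observation~\ref{t.max_to_alt}.
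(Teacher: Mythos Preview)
Your proposal is correct and follows exactly the same approach as the paper's own proof: invoke Theorem~\ref{t.max_eq_alt} together with the observation, made in its proof, that every alternating non-crossing tree on $[n]$ with $n\ge 2$ contains the edge $\{1,n\}$. The alternative hands-on route you sketch is also valid but unnecessary here.
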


\begin{proof}
This follows from Theorem~\ref{t.max_eq_alt} together with the observation 
(mentioned in this theorem's proof) that 
every alternating non-crossing tree of order $n\ge 2$ contains the edge $\{1,n\}$.

\end{proof}

\section{Right and left inversions}
\label{section:problems}



Recall that the weak order on $S_n$ is characterized by inversion sets, namely:
$\pi\le \sigma$ in the (right) weak order on $S_n$ if and only if
$\Inv(\pi)\subseteq \Inv(\sigma)$, where
$\Inv(\pi):=\{(i,j):\ i<j\ \text {and}\  \pi^{-1}(i)>\pi^{-1}(j)\}$;
see, e.g., \cite[Cor.\ 1.5.2 and Prop.\ 3.1.3]{BB}.


The goal of this section is to provide a similar characterization for $\W(F_n)$.

\begin{defn}\label{defn-LR-inversions}\ \rm
The set of {\em inversions} of $w=(t_1,\dots, t_{n-1})\in F_n$  is
$\Inv(w):=\Inv(\phi(w))$. 
A pair $(i,j)\in \Inv(\phi(w))$ is
\begin{itemize}
\item
a {\em right inversion} 
if there exist $a<b<c$ such that $t_{\phi(w)^{-1}(j)}=(a,c)$ and
$t_{\phi(w)^{-1}(i)}=(a,b)$, or there exist $a<b<c<d$ such that
$t_{\phi(w)^{-1}(j)}=(a,d)$ and $t_{\phi(w)^{-1}(i)}=(b,c)$;
\item
a {\em left inversion} 
if there exist $a<b<c$ such that
$t_{\phi(w)^{-1}(j)}=(b,c)$ and $t_{\phi(w)^{-1}(i)}=(a,c)$, or
there exist $a<b<c<d$ such that $t_{\phi(w)^{-1}(j)}=(b,c)$ and
$t_{\phi(w)^{-1}(i)}=(a,d)$;
\item
a {\em neutral inversion} 
if there exist $a<b<c<d$ such that $t_{\phi(w)^{-1}(j)}=(c,d)$ and
$t_{\phi(w)^{-1}(i)}=(a,b)$.
\end{itemize}
\end{defn}

\begin{remark}\label{left-right-explanation}\ \rm
If $t_{\phi(w)^{-1}(i)}=(a,b)$, $a<b$, write $I_i$ for the
interval $[a,b]\subseteq [1,n]$. Then $(i,j)\in \Inv(\phi(w))$ is
\begin{itemize}
\item
a right inversion if and only if $I_i\subset I_j$;
\item
a left inversion if and only if $I_j\subset I_i$;
\item
a neutral inversion if and only if $I_i\cap I_j=\emptyset$.
\end{itemize}
\end{remark}

Denote the set of right, left and neutral inversions of $w\in F_n$
by $\Inv_R(w)$, $\Inv_L(w)$ and $\Inv_N(w)$, respectively.
Lemma~\ref{t.inversions} implies
\begin{observation}\label{t.LN}
For every $w\in F_n$,
\[
\Inv(w)= 
\Inv_R(w)\cup \Inv_L(w)\cup \Inv_N(w),
\]
a disjoint union.
\end{observation}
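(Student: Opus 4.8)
The plan is to derive Observation~\ref{t.LN} directly from the trichotomy already established in Lemma~\ref{t.inversions}. Fix $w = (t_1, \ldots, t_{n-1}) \in F_n$ and a pair $(i,j) \in \Inv(\phi(w))$, i.e.\ $i < j$ and $\phi(w)^{-1}(i) > \phi(w)^{-1}(j)$. Write $k := \phi(w)^{-1}(j)$ and $\ell := \phi(w)^{-1}(i)$, so $k < \ell$ and $\phi(w)(k) = j > i = \phi(w)(\ell)$; thus the pair of positions $(k, \ell)$ is exactly an ``inversion pair of positions'' to which Lemma~\ref{t.inversions} applies (with the roles of $j, k$ in that lemma played by our $k, \ell$). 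The first step is simply to transcribe the three mutually exclusive cases of Lemma~\ref{t.inversions} and match each one against Definition~\ref{defn-LR-inversions}.

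First I would show that the three defining conditions are exhaustive: Lemma~\ref{t.inversions} asserts that $(k,\ell)$ being an inversion pair of positions forces exactly one of cases (1), (2), (3) to hold. Case (1)(i), $t_k = (a,c)$ and $t_\ell = (a,b)$ with $a<b<c$, together with case (3)(i), $t_k = (a,d)$, $t_\ell = (b,c)$ with $a<b<c<d$ and $\{b,c\} \in T_a$, are precisely the two sub-cases defining a right inversion. Case (1)(ii), $t_k = (b,c)$ and $t_\ell = (a,c)$, together with case (3)(ii), $t_k = (b,c)$, $t_\ell = (a,d)$ with $\{b,c\} \in T_d$, are precisely the two sub-cases defining a left inversion. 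Case (2), $t_k = (c,d)$ and $t_\ell = (a,b)$ with $a<b<c<d$, is precisely the defining condition of a neutral inversion. Hence every element of $\Inv(w)$ lies in at least one of $\Inv_R(w)$, $\Inv_L(w)$, $\Inv_N(w)$, giving $\Inv(w) = \Inv_R(w) \cup \Inv_L(w) \cup \Inv_N(w)$.

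Next I would establish disjointness. The cleanest route is the geometric reformulation of Remark~\ref{left-right-explanation}: writing $I_i = [a,b]$ and $I_j = [a',b']$ for the intervals corresponding to the transpositions $t_{\phi(w)^{-1}(i)}$ and $t_{\phi(w)^{-1}(j)}$, the pair is right if $I_i \subset I_j$, left if $I_j \subset I_i$, and neutral if $I_i \cap I_j = \emptyset$. Since the tree $G(w)$ is non-crossing (Proposition~\ref{GY22}(ii)) and contains no repeated edge, for two distinct edges the intervals are either nested one way, nested the other way, or disjoint --- these three possibilities are manifestly mutually exclusive. Thus no pair can be simultaneously of two types, and the union is disjoint. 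One should note in passing that this also reconfirms exhaustiveness, since for a non-crossing tree two distinct edges cannot have properly overlapping intervals.

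The only genuine care needed --- and the step I expect to be the main (minor) obstacle --- is the bookkeeping that matches the position-indexed data of Lemma~\ref{t.inversions} (which speaks of positions $j<k$ with $t_j, t_k$) to the value-indexed data of Definition~\ref{defn-LR-inversions} (which speaks of $t_{\phi(w)^{-1}(i)}, t_{\phi(w)^{-1}(j)}$), making sure the inequality $\phi(w)(k) > \phi(w)(\ell)$ is read off in the correct direction so that ``$t_j$ in the lemma'' corresponds to ``$t_{\phi(w)^{-1}(j)}$ in the definition'' and not the other way around. Once that identification is pinned down, the proof is a direct case-by-case comparison. I would present it as: invoke Lemma~\ref{t.inversions} for the cover, invoke Remark~\ref{left-right-explanation} (or directly the non-crossing property) for the disjointness, and conclude.
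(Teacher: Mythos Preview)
Your proposal is correct and follows exactly the route the paper takes: the paper's entire proof is the single phrase ``Lemma~\ref{t.inversions} implies,'' and you have simply unpacked that implication, matching the cases of Lemma~\ref{t.inversions} to those of Definition~\ref{defn-LR-inversions} and invoking Remark~\ref{left-right-explanation} (equivalently, the non-crossing property) for disjointness. Your discussion of the position-versus-value bookkeeping is a helpful elaboration but not a deviation from the paper's argument.
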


\begin{example}
For $w=((3,4),(1,5),(5,6),(1,4),(1,2))\in F_6$, 
$\phi(w)=[3,4,5,2,1]$. 
Then $\Inv(w)=\{(1,2),(1,3),(1,4),(1,5),(2,3),(2,4),(2,5)\}$ 
is a disjoint union of 
$\Inv_R(w)=\{(1,2),(1,4),(2,4)\}$, \,
$\Inv_N(w)=\{(1,3), (1,5), (2,5)\}$ \, and \,
$\Inv_L(w)=\{(2,3)\}$.
\end{example}


Define the involution $\iota: F_n\longrightarrow F_n$ by 
\[
\iota(t_1,\ldots,t_{n-1}) := (t_{n-1}^{\sigma_0}, \ldots ,t_1^{\sigma_0}),
\]
where $g^h:= h^{-1}gh$ and $\sigma_0:=[n,\ldots,1] \in S_n$.
In Figure~\ref{figure2-tik}, for example, $\iota$ corresponds to a reflection of the Hasse diagram
of $\W(F_4)$ in a vertical line through $e$.

\begin{observation}\label{iota}
Let $\pi_0 = [n-1, \ldots, 1] \in S_{n-1}$.
\begin{itemize}
\item[1.]
For every $w\in F_n$:
$\phi(\iota(w))=\phi(w)^{\pi_0}$ and also
$\Inv_N(\iota(w)) = \Inv_N(w)^{\pi_0}$,
$\Inv_R(\iota (w)) = \Inv_L(w)^{\pi_0}$ and
$\Inv_L(\iota (w))=\Inv_R(w)^{\pi_0}$.
\item[2.]
$\iota$ is a rank preserving automorphism of the poset $\W(F_n)$.
\end{itemize}
\end{observation}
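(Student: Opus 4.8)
## Proof proposal for Observation~\ref{iota}

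The plan is to verify part~1 by direct computation with the partial products, and then deduce part~2 as a formal consequence of part~1 together with Lemma~\ref{t.inversions} and Lemma~\ref{t.rank_eq_inv}. First I would set up notation: write $w = (t_1,\ldots,t_{n-1})$ and $\iota(w) = (t'_1,\ldots,t'_{n-1})$ with $t'_j = t_{n-j}^{\sigma_0}$. Denote by $\sigma'_j$ the partial products of $\iota(w)$, so that $\sigma'_j = t'_j t'_{j+1}\cdots t'_{n-1} = t_{n-j}^{\sigma_0} t_{n-j-1}^{\sigma_0}\cdots t_1^{\sigma_0} = (t_{n-j}\cdots t_1)^{\sigma_0} = (\sigma_1^{-1}\sigma_{n-j+1})^{\sigma_0}$, using $t_1\cdots t_k = \sigma_1\sigma_{k+1}^{-1}$; here one should double-check the exact index bookkeeping, since $\sigma_1 = c$ and the reversed word multiplies the original transpositions in the opposite order. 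The key point is that each $\sigma'_j$ is $\sigma_0$-conjugate to an explicit partial product of $w$, so the descent data transforms in a controlled way.

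The main computation is to track how $\phi$ behaves. Using Observation~\ref{obs.phi}: if $t_j = (a,b)$ with $a<b$ then $\phi(w)(j) = \sigma_{j+1}^{-1}(a)$; conjugating by $\sigma_0 = [n,\ldots,1]$ sends $(a,b)$ to $(\sigma_0(b),\sigma_0(a)) = (n+1-b, n+1-a)$, which is again a transposition with $n+1-b < n+1-a$. Running this through the formula for $\phi(\iota(w))$ and simplifying the conjugated partial product, one should arrive at $\phi(\iota(w)) = \pi_0 \cdot \phi(w)\cdot \pi_0$ where $\pi_0 = [n-1,\ldots,1]$ is the longest element of $S_{n-1}$ — i.e. $\phi(\iota(w)) = \phi(w)^{\pi_0}$, since $\pi_0^{-1} = \pi_0$. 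This is the crux: it is a routine but slightly delicate index chase, and the reversal of the word order is exactly what produces the two-sided conjugation rather than a one-sided twist. Once this identity is in hand, the inversion statements follow by inspecting Remark~\ref{left-right-explanation}: conjugating $\phi(w)$ by $\pi_0$ simply relabels positions $k \mapsto n-k$ and values $v \mapsto n-v$, which preserves the containment/disjointness relations among the intervals $I_i = [a,b]$ attached to $(i,j)\in\Inv$, but swaps the roles of ``larger interval'' and ``smaller interval'' only when the relabeling reverses inclusion. Tracking this carefully shows $\Inv_N$ is sent to $\Inv_N^{\pi_0}$ while $\Inv_R$ and $\Inv_L$ are interchanged, as claimed. (Alternatively, one can read these off directly from Definition~\ref{defn-LR-inversions} by noting that $(a,d)\leftrightarrow(n+1-d,n+1-a)$ and $(b,c)\leftrightarrow(n+1-c,n+1-b)$ turns the ``right'' pattern $t_{\phi(w)^{-1}(j)}=(a,d),\ t_{\phi(w)^{-1}(i)}=(b,c)$ into the ``left'' pattern with the new outer/inner transpositions, and similarly for the $a<b<c$ cases.)

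For part~2, I would argue as follows. The map $\iota$ is clearly an involution ($\sigma_0$-conjugation is an involution and reversal is an involution, and they commute), hence a bijection of $F_n$. It is rank preserving: by Lemma~\ref{t.rank_eq_inv}, $\rank(\iota(w)) = \inv(\phi(\iota(w))) = \inv(\phi(w)^{\pi_0}) = \inv(\phi(w)) = \rank(w)$, since conjugation by $\pi_0$ is an automorphism of $S_{n-1}$ that preserves the inversion number (indeed $|\Inv(\phi(w)^{\pi_0})| = |\Inv(\phi(w))|$ follows already from part~1 and Observation~\ref{t.LN}). Finally, $\iota$ is an order automorphism: since $\W(F_n)$ is graded and the covering relations are exactly the rank-one steps of the form $v \mapsto R_j(v)$ or $v\mapsto L_j(v)$ (Corollary~\ref{t.Hasse}), it suffices to check that $\iota$ carries the operators $R_j, L_j$ to operators of the same type. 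Because reversing the word swaps a left-slide at one position with a right-slide at the mirror position, and $\sigma_0$-conjugation is an algebra automorphism commuting with the Hurwitz moves, one gets $\iota\circ R_j = L_{n-1-j}\circ\iota$ and $\iota\circ L_j = R_{n-1-j}\circ\iota$; combined with rank preservation this shows $\iota$ sends covers to covers in both directions, so it is a poset automorphism. The main obstacle, as noted, is getting the index conventions in the first computation exactly right — in particular confirming that the reversal-plus-conjugation really yields the two-sided $\pi_0$-conjugate of $\phi(w)$ and not some shifted variant — everything after that is formal.
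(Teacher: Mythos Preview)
The paper states this result as an Observation without proof, so there is no argument to compare against; your proposal is essentially a correct verification of what the paper leaves implicit.

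A few remarks on the details. Your partial-product formula has a small slip: one gets $\sigma'_j = (\sigma_{n-j+1}\,\sigma_1^{-1})^{\sigma_0}$, not $(\sigma_1^{-1}\sigma_{n-j+1})^{\sigma_0}$; you flagged this as needing a check, and with the correct version the computation goes through. Concretely, for $1\le i\le n-1$ one finds $\sigma'_j(i) = n+1-\sigma_{n-j+1}(n-i)$, and comparing with $\sigma'_{j+1}(i)$ gives $\phi(\iota(w))(j) = n-\phi(w)(n-j) = (\pi_0\,\phi(w)\,\pi_0)(j)$, confirming $\phi(\iota(w))=\phi(w)^{\pi_0}$. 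The inversion-type claims then follow cleanly from Remark~\ref{left-right-explanation}: $\sigma_0$-conjugation reflects each interval $[a,b]\mapsto[n{+}1{-}b,\,n{+}1{-}a]$, preserving containment and disjointness, while the passage $(i,j)\mapsto(n{-}j,\,n{-}i)$ swaps which index of the pair is the smaller one; together these turn $I_i\subset I_j$ into $I^{\iota}_{n-i}\subset I^{\iota}_{n-j}$, i.e.\ right into left, and fix neutral.

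For part~2, your intertwining relation $\iota\circ R_j = L_{n-1-j}\circ\iota$ is correct and, with rank preservation, yields that $\iota$ sends covers to covers in both directions. An even shorter route: this intertwining already shows $\iota$ is an automorphism of the graph $G_T(n)$, and $\iota(e)=e$ by direct inspection (each $(k,k{+}1)^{\sigma_0}=(n{-}k,n{+}1{-}k)$, and reversal restores the original order); since the weak order (Definition~\ref{d.order}) is defined via geodesics from $e$ in $G_T(n)$, any graph automorphism fixing $e$ is automatically a poset automorphism.
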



\begin{proposition}\label{determine}\ \\
Each of the pairs $(\Inv(u), \, \Inv_{R}(u))$ and
$(\Inv(u), \, \Inv_{L}(u))$  determines the word $u \in F_n$ uniquely;
namely, for $u,v\in F_n$,
\begin{eqnarray*}
u = v  
\ &\iff& \
\Inv(u) = \Inv(v) \ \ \text{\rm and} \ \ \Inv_{R}(u) = \Inv_{R}(v) \\ 
\ &\iff& \ 
\Inv(u) = \Inv(v) \ \ \text{\rm and} \ \ \Inv_{L}(u) = \Inv_{L}(v).
\end{eqnarray*}
\end{proposition}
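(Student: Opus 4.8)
The plan is to prove the two equivalences by reconstructing the word $u$ from the data $(\Inv(u),\Inv_R(u))$; the argument for $(\Inv(u),\Inv_L(u))$ will then follow by applying the involution $\iota$, using Observation~\ref{iota}(1), which swaps $\Inv_R$ with $\Inv_L^{\pi_0}$ and leaves $\Inv$ conjugated by $\pi_0$. So I would focus on the right-inversion case. One direction is trivial: if $u=v$ then all the inversion data coincide. For the converse, suppose $\Inv(u)=\Inv(v)$ and $\Inv_R(u)=\Inv_R(v)$. Since $\Inv(w)=\Inv(\phi(w))$ and the weak order on $S_{n-1}$ is determined by inversion sets, the first equality gives $\phi(u)=\phi(v)=:\pi$. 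So it remains to show that $\pi$ together with $\Inv_R$ pins down the word $u\in\phi^{-1}(\pi)$.

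The key is Remark~\ref{left-right-explanation}: knowing $\pi=\phi(u)$ and which inversions are right inversions tells us, for each pair of transpositions $t_{\pi^{-1}(i)}$, $t_{\pi^{-1}(j)}$ (forming an inversion in $\pi$), whether the associated intervals $I_i,I_j\subseteq[1,n]$ satisfy $I_i\subset I_j$, $I_j\subset I_i$, or $I_i\cap I_j=\emptyset$ — the latter two being lumped together as ``not a right inversion.'' First I would recover the position of the longest edge: by Corollary~\ref{1n}, or more robustly by the analysis around Lemma~\ref{t.cover-number}, $\pi(1)=j-1$ forces $t_1=(i,j)$ for some $i<j$, and I would argue that $i$ is itself determined by the right-inversion data (the value of $i$ controls exactly which later edges nest inside $I_1=[i,j]$, i.e.\ which pairs $(1,k)$ are right inversions). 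Having fixed $t_1$, the set $\{i,\ldots,j-1\}$ and its complementary cyclic interval $\{j,\ldots,n,1,\ldots,i-1\}$ split the remaining transpositions into two sub-words $w_1\in F_{j-i}$ and $w_2\in F_{n-j+i}$, exactly as in the proof of Lemma~\ref{t.cover-number}; the restricted permutations $p(\pi|_{A(i,j)})$ and $p(\pi|_{B(i,j)})$ are their $\phi$-images, and the right-inversion sets of $w_1$ and $w_2$ are the induced restrictions of $\Inv_R(u)$ (since nesting of intervals is preserved under passing to a sub-word). By induction on $n$, $w_1$ and $w_2$ are determined, and since a word in $F_n$ is the unique shuffle of $w_1$, $w_2$ with $t_1$ dictated by the positions encoded in $\pi$, $u$ is recovered.

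The main obstacle is the step ``$i$ is determined,'' i.e.\ showing the endpoints of each edge $t_{\pi^{-1}(m)}=(a,b)$ are forced, not just the abstract nesting pattern. The nesting/disjointness relation among the intervals $I_m$ (a laminar family of $n-1$ subintervals of $[1,n]$ forming a tree) does not by itself fix the intervals as subsets of $[1,n]$; one needs to use that the underlying graph is a \emph{non-crossing tree} on all of $[1,n]$ and that its edges are consecutive-interval-labeled in a compatible cyclic way (Proposition~\ref{GY22}), together with the fact that the vertex set is exactly $\{1,\ldots,n\}$. Concretely I would argue that the leaves of the laminar tree correspond to edges $\{a,a+1\}$, peel them off, and induct on the tree structure — at each stage the combinatorial data plus ``the vertices used so far are a known consecutive set'' forces the next interval's endpoints. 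An alternative, cleaner route would be to bypass explicit endpoint-recovery: set up the induction purely through Lemma~\ref{t.cover-number}'s recursive structure, observing that $\pi$ determines $j$, that $\Inv_R$ determines which sub-word each later edge lands in (hence determines $i$ as the boundary of the block), and that the restricted data feeds the inductive hypothesis; this makes the whole proof a bookkeeping exercise on the recursion tree with the laminar-nesting interpretation of $\Inv_R$ as the one nontrivial ingredient. I would present the latter version, flagging the leaf-peeling observation as a lemma if the referee wants the endpoint claim spelled out.
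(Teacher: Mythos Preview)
Your proposal is essentially the paper's own proof, mirror-imaged: the paper peels off $t_{n-1}$ (rather than your $t_1$) and proves the $\Inv_L$ case directly (rather than $\Inv_R$), deducing the $\Inv_R$ case via $\iota$ just as you deduce $\Inv_L$. The ``main obstacle'' you flag dissolves cleanly by the same counting argument the paper uses (dualized): with $t_1=(i,j)$ and $\pi(1)=j-1$, the right inversions of the form $(m,j-1)$ are exactly those $m$ with $I_m\subset[i,j]$, and these are precisely the $j-i-1$ edges supported in $\{i,\ldots,j-1\}$ from the decomposition of Lemma~\ref{t.cover-number}; so $i$ is read off from this count, no laminar leaf-peeling needed. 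Your ``alternative, cleaner route'' is exactly this argument and is the one to present. Two small corrections: Corollary~\ref{1n} concerns only maximal elements and is irrelevant here, and $t_1$ is not in general ``the longest edge'' --- all that is used is that $\pi(1)+1$ is its larger endpoint (from Observation~\ref{obs.phi} applied with $\sigma_1=c$).
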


\begin{proof}
If $u = v$ then, of course, all the other equalities hold.
Let us prove that if $\Inv(u) = \Inv(v)$ and $\Inv_{L}(u) = \Inv_{L}(v)$
then $u = v$.
We shall use the shorthand notation $\phi_u := \phi(u) \in S_{n-1}$
for $u = (t_1, \ldots, t_{n-1}) \in F_n$.
Of course, the inversion set $\Inv(u) = \Inv(\phi_u)$ determines $\phi_u$ uniquely
since it determines, for any $i \ne j$, whether or not $\phi_u^{-1}(i) < \phi_u^{-1}(j)$.
we shall show, by induction on $n$, that $\Inv(u)$ and $\Inv_L(u)$ determine
$t_{r}$ for all $1 \le r \le n-1$.

This trivially holds for $n \le 2$.
For the induction step assume that $t_{n-1} = (i,j)$ with $i<j$. 
By Observation~\ref{obs.phi}, $\phi_u(n-1) = i$. 
We can recover $j$ from $\Inv_L(u)$ once we show that there are
exactly $j-i-1$ left inversions of $\phi_u$ involving $\phi_u(n-1)$.
Indeed, the inversions involving $\phi_u(n-1)$ are exactly
the ordered pairs $(\phi_u(n-1), \phi_u(k))$ such that
$k < n-1$ (automatically satisfied) and $\phi_u(k) > \phi_u(n-1)$.
By Remark~\ref{left-right-explanation}, such a pair is a {\em left} inversion
if and only if the $k$-th edge of $G(u)$ has
both endpoints in the interval $I_{\phi_u(n-1)} = [i,j]$, 
and is not the edge $\{i,j\}$ itself. 
By Proposition~\ref{GY22}, there are exactly $j-i-1$ such edges.
Thus we have recovered both $i$ and $j$.

Now note that 
$t_1 \cdots t_{n-2} = c \cdot (i,j)$ is a product of two disjoint cycles, 
$c' = (i+1, \ldots, j)$ and $c'' = (j+1, \ldots, i) = (1, \ldots, i, j+1, \ldots, n)$,
of corresponding lengths $n' = j-i$ and $n'' = n-j+i$. 
Denote $V' := \{i+1, \ldots, j\}$ and $V'' := \{1, \ldots, i, j+1, \ldots, n\}$,
and let $s': \{1, \ldots, n'\} \to V'$ and $s'': \{1, \ldots, n''\} \to V''$ be
monotone increasing bijections.
The word $(t_1, \ldots, t_{n-2})$ is a shuffle of two subwords, 
one supported in $V'$ and the other in $V''$.
For $1 \le k \le n-2$ we don't yet know $t_k$, but we do know
whether it is supported in $V'$ or in $V''$ -- depending on whether
$t_{n-1}(\phi_u(k))$ belongs to $V'$ or to $V''$.
Let $K' := \{k'_1, \ldots, k'_{n'-1}\}$  and $K'' := \{k''_1, \ldots, k''_{n''-1}\}$ 
be the sets of indices $k$ for which $t_k$ is supported in $V'$ and in $V''$, respectively.  
Denoting $t'_{k'} := (s')^{-1} t_{k'} s'$, the word 
$u' := (t'_{k'_1}, \ldots, t'_{k'_{n'-1}})$ is in $F_{n'}$, 
and a similar definition gives $u'' \in F_{n''}$.
By induction, it suffices to show that $\Inv(u)$ and $\Inv_L(u)$ determine 
$\Inv(u')$ and $\Inv_L(u')$ (since $t_{k'}$ can then be retrieved from $t'_{k'}$), 
and similarly for $u''$.

Actually, since $s'$ is monotone increasing, $s'(\phi_{u'}(t)) = t_{n-1}(\phi_u(k'_t))$
for any $1 \le t \le n'-1$. Since $\phi_{u'}(t) \ne n'$ and $i \not\in V'$, it follows that
$s'(\phi_{u'}(t)) \not\in \{i, j\}$ and therefore $s'(\phi_{u'}(t)) = \phi_u(k'_t)$.
Thus there is a bijection between the inversions of $u'$ and the relevant inversions of $u$,
which may be written (with a slight abuse of notation) as $s'(\Inv(u')) = \Inv(u|_{K'})$.
The situation for $u''$ is only slightly different, since indeed $i \in V''$ but 
an application of $t_{n-1}$, exchanging $i$ with $j$, does not change 
its relative position in the set. Therefore $s''(\Inv(u'')) = t_{n-1}(\Inv(u|_{K''}))$.
Remark~\ref{left-right-explanation} now shows that similar relations hold
for $\Inv_L$, completing the induction step.
Thus $\Inv(u)$ and $\Inv_L(u)$ uniquely determine $u$.


Finally, by Observation~\ref{iota}.1, 
$\Inv(u)$ and $\Inv_R(u)$ determine
$\Inv(\iota(u))$ and $\Inv_L(\iota(u))$, thus $\iota(u)$ and $u$.

\end{proof}

\begin{remark}
$\Inv_L(u)$ and $\Inv_R(u)$ alone do not suffice to uniquely determine $u \in F_n$.
Here is an example from $F_4$ (see Figure~\ref{figure2-tik}):
$u_1 = (12, 34, 24)$ and $u_2 = (34, 12, 24)$ are distinct, but have 
$\Inv_L(u_1) = \Inv_L(u_2) = \{(2, 3)\}$ as well as 
$\Inv_R(u_1) = \Inv_R(u_2) = \emptyset$
(while $\Inv_N(u_1) = \emptyset \ne \{(1, 3)\} = \Inv_N(u_2)$).
\end{remark}

\medskip


\begin{proposition}\label{criterion}
For $u, v \in F_n$, if $u \le v$ in $\W(F_n)$ then
$\Inv(u) \subseteq \Inv(v)$,  $\Inv_R(u) \subseteq \Inv_R(v)$
and $\Inv_L(u) \subseteq \Inv_L(v)$.
\end{proposition}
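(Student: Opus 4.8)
The plan is to reduce the statement to the case where $u$ is covered by $v$ in $\W(F_n)$, and then analyze a single covering step. Since $u \le v$ means $u$ lies on a geodesic from $e$ to $v$, there is a saturated chain $u = w_0 \lessdot w_1 \lessdot \cdots \lessdot w_m = v$ in $\W(F_n)$; as all three inclusions $\Inv \subseteq \Inv$, $\Inv_R \subseteq \Inv_R$, $\Inv_L \subseteq \Inv_L$ are transitive, it suffices to prove them for a single cover $u \lessdot v$. The inclusion $\Inv(u) \subseteq \Inv(v)$ is immediate: by Corollary~\ref{t.Hasse} the cover corresponds to $v = R_j(u)$ or $v = L_j(u)$ with $\rank(v) = \rank(u)+1$, hence by Lemma~\ref{t.rank_eq_inv} and Lemma~\ref{t.main-lemma} we have $\phi(v) = \phi(u)s_j$ with $\inv(\phi(v)) = \inv(\phi(u))+1$, i.e. $j \notin \D(\phi(u))$, so $\Inv(\phi(u)) \subseteq \Inv(\phi(u)s_j)$ by the standard description of the weak order on $S_{n-1}$.

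The substantive part is tracking $\Inv_R$ and $\Inv_L$ across this cover. First I would observe that passing from $u$ to $v = R_j(u)$ (the case $L_j$ is symmetric, or can be handled via the involution $\iota$ of Observation~\ref{iota}, which swaps $\Inv_R$ and $\Inv_L$ and is a poset automorphism) changes only the $j$-th and $(j+1)$-st transpositions, and leaves all other transpositions fixed. Using the interval description of Remark~\ref{left-right-explanation} — a pair $(i,i')\in\Inv(\phi(w))$ is right/left/neutral according as the interval attached to the position mapping to $i$ is strictly contained in, strictly contains, or is disjoint from the interval attached to the position mapping to $i'$ — the type of an inversion is determined purely by the pair of intervals (edges of $G(w)$) involved, together with the information of which of the two positions comes first under $\phi(w)^{-1}$. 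For a pair of positions both outside $\{j,j+1\}$, neither the intervals nor their relative $\phi^{-1}$-order changes (the latter because the only values of $\phi$ that are permuted are the two at positions $j, j+1$, and for such a pair at most one of the two is $\phi(u)(j)$ or $\phi(u)(j+1)$ and its relative order with an outside value is unaffected). So for these pairs membership in $\Inv_R$ (resp. $\Inv_L$) is literally unchanged. What remains is to check pairs that involve at least one of the positions $j$ or $j+1$.

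For such pairs I would use Corollary~\ref{t.avoid} and Lemma~\ref{t.inversions} to enumerate the local possibilities for $(t_j, t_{j+1})$ in $u$ and in $v = R_j(u)$. When $t_j$ and $t_{j+1}$ commute, $R_j$ just transposes two disjoint edges; the multiset of intervals is unchanged, and one checks the relative $\phi^{-1}$-orders flip only for the pair $\{j,j+1\}$ itself, which by Lemma~\ref{t.inversions}(2) (if an inversion at all) is neutral in both $u$ and $v$, so $\Inv_R$ and $\Inv_L$ are unchanged. When $t_j$ and $t_{j+1}$ do not commute, Corollary~\ref{t.avoid} gives three shapes; since the cover has $\phi(v)=\phi(u)s_j$ with $j\notin\D(\phi(u))$, the argument in the proof of Lemma~\ref{t.forbidden-wedge} pins down that in $u$ the pair $(t_j,t_{j+1})$ is $((a,b),(b,c))$ — so $(j,j+1)$ is \emph{not} an inversion of $\phi(u)$ — and after applying $R_j$ the new $j$-th and $(j+1)$-st transpositions form one of the configurations of Lemma~\ref{t.inversions}(1), so the pair becomes an inversion of $\phi(v)$, of the appropriate right or left type; this single new inversion is exactly the added element of $\Inv(\phi(v))\setminus\Inv(\phi(u))$, so nothing is removed from $\Inv_R$ or $\Inv_L$, and one element is added to one of them. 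Finally, for a pair consisting of one of $j,j+1$ and a third position $k$: the edge at $k$ is unchanged, and the edge at $\{j,j+1\}$ involved in the pair changes its endpoints; I would argue using the non-crossing tree structure (Proposition~\ref{GY22}) and Lemma~\ref{phi.local-range-2} that the containment/disjointness relation between the $k$-th interval and the relevant $j$- or $(j+1)$-st interval, together with the relative $\phi^{-1}$-order, is preserved — essentially because the local rewrite $((a,b),(b,c)) \to ((a,c),(b,c))$ or $\to ((a,b),(a,c))$ only enlarges one interval to the union $[a,c]$, and a third edge of a non-crossing tree that was nested inside or disjoint from $[a,b]$ (resp. $[b,c]$) remains nested inside or disjoint from $[a,c]$.

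The main obstacle I expect is exactly this last bookkeeping for the mixed pairs $(j \text{ or } j+1, \, k)$: one must rule out that an inversion which was neutral becomes right or left, or that a left inversion becomes right, when one of the two intervals grows to $[a,c]$. The key lever for this is the non-crossing condition Proposition~\ref{GY22}(ii) — a third edge cannot "straddle" the boundary between $[a,b]$ and $[b,c]$ — combined with the fact, already extracted in Lemma~\ref{phi.local-range-2}, that the $\phi$-value attached to an edge $(x,y)$ lies in a component determined by which side of the tree carries the other edge; this should force the relative position of the intervals, hence the inversion type, to be monotone under the enlargement. Once these cases are dispatched, assembling the single-cover statement and invoking transitivity along the saturated chain completes the proof.
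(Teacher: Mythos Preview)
Your plan is essentially the paper's own proof: reduce to a single cover, observe that outside positions $j,j+1$ nothing changes, note that one of the two affected values keeps its interval while the other's interval grows from $[a,b]$ or $[b,c]$ to $[a,c]$, and use non-crossing to classify the third edge into four bins (inside $[a,b]$, inside $[b,c]$, containing $[a,c]$, disjoint from $[a,c]$), checking in each that right stays right and left stays left.

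Two small corrections to your writeup, neither of which breaks the argument. First, in the commuting case the new inversion $p_0$ is \emph{not} always neutral: if $(t_j,t_{j+1})$ are nested (Lemma~\ref{t.inversions}(3)) rather than separated (Lemma~\ref{t.inversions}(2)), $p_0$ is right or left in $v$. This is harmless because $p_0\notin\Inv(u)$ anyway. Second, your description of the ``main obstacle'' is inverted: neutral $\to$ right or neutral $\to$ left is \emph{allowed} (and does occur, for third edges inside the subinterval that gets absorbed); what you must rule out is right $\to$ \{left, neutral\} and left $\to$ \{right, neutral\}. The non-crossing classification you invoke does exactly that --- the paper uses the fact that the third edge is non-crossing with all three of $\{a,b\},\{b,c\},\{a,c\}$ (it lies in both $G(u)$ and $G(v)$), which is what forces it into one of the four bins. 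Lemma~\ref{phi.local-range-2} is not needed here.
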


\begin{proof}
It suffices to prove the claim when $v$ covers $u$ in $\W(F_n)$. 
Thus $\phi(u) = \phi(v) s_i < \phi(v)$ in $S_{n-1}$, for some $1 \le i \le n-2$,
and consequently $\Inv(u) = \Inv(v) \setminus \{p_0\} \subseteq \Inv(v)$
for the pair $p_0 = (\phi(v)(i+1), \phi(v)(i)) = (\phi(u)(i), \phi(u)(i+1))$.
There are three possible cases:
$u = R_i(v) = L_i(v)$, $u = R_i(v) \ne L_i(v)$, and $u = L_i(v) \ne R_i(v)$.
We want to show that 
$\Inv_R(u) \subseteq \Inv_R(v)$ and $\Inv_L(u) \subseteq \Inv_L(v)$.

Consider an arbitrary inversion $p = (\phi(u)(k), \phi(u)(j))$ of $u$.
It is also an inversion of $v$.
If $j \not\in \{i, i+1\}$ then $t_j(u) = t_j(v)$, where 
$u = (t_1(u), \ldots, t_{n-1}(u))$ and similarly for $v$. 
By Remark~\ref{left-right-explanation}, the type of $p$
(left, right or neutral) is then the same in $u$ as in $v$. 

If $u = R_i(v) = L_i(v)$, namely if $t_i(v)$ and $t_{i+1}(v)$ commute,
then $t_i(u) = t_{i+1}(v)$ and $t_{i+1}(u) = t_i(v)$,
and therefore {\em all} inversions keep their type upon transforming $u$ to $v$:
$\Inv_R(u) = \Inv_R(v) \setminus \{p_0\}$ and
$\Inv_L(u) = \Inv_L(v) \setminus \{p_0\}$.

Assume now that $u = R_i(v) \ne L_i(v)$. Then $t_i(v)$ and $t_{i+1}(v)$
do not commute, and by Lemma~\ref{t.inversions} there exist $a < b < c$
such that $(t_i(v), t_{i+1}(v)) = ((b,c), (a,c))$ and $(t_i(u), t_{i+1}(u)) = ((a,b), (b,c))$.
Since $t_{i+1}(u) = t_i(v)$, the inversions involving $\phi(u)(i+1)$ do not
change their type upon transforming $u$ to $v$. It remains to consider
the inversions involving $\phi(u)(i)$. Consider the graphs $G(v)$ and $G(u)$,
which are identical except that the former has an edge $\{a, c\}$
and the latter an edge $\{a, b\}$ (and both have an edge $\{b, c\}$).
For any other edge of these graphs, the non-crossing property implies that
the corresponding interval $I_k$ belongs to one of the following four classes:
Contained in $[b,c]$; contained in $[a,b]$; containing $[a,c]$; and 
disjoint from $[a,c]$. 
We consider only edges corresponding to inversions with $\phi(u)(i)$.
Using Remark~\ref{left-right-explanation} again we see that,
for the last three classes, transforming $u$ to $v$
(namely replacing $\{a, b\}$ by $\{a, c\}$) does not change the type of
inversion with $\phi(u)(i)$. For the first class, the type changes
from neutral to either right or left.
Thus we conclude that 
$\Inv_R(u) \subseteq \Inv_R(v)$ and $\Inv_L(u) \subseteq \Inv_L(v)$.

The treatment of the case $u = L_i(v) \ne R_i(v)$, for which
$(t_i(v), t_{i+1}(v)) = ((a,c), (a,b))$ and $(t_i(u), t_{i+1}(u)) = ((a,b), (b,c))$,
is similar.

\end{proof}

\begin{remark}
Various natural stronger versions of Proposition~\ref{criterion} 
can be shown to be false by examples from $F_4$.
For example, $\Inv_N(u)$, $\Inv_R(u) \cup \Inv_N(u)$ and $\Inv_L(u) \cup \Inv_N(u)$
are not even weakly monotone increasing functions of $u$;
and inclusions of any two of the sets $\Inv$, $\Inv_R$ and $\Inv_L$
does not imply $u \le v$ in $\W(F_n)$.
\end{remark}

It would be interesting to know whether the converse of Proposition~\ref{criterion}
holds.

\section{Right inversions and $q$-Catalan numbers}

A $q$-analogue of Theorem~\ref{t.main3} is given in this section.

\subsection{Enumeration of words by right and left inversions}

Carlitz and Riordan~\cite{CR} defined a $q$-Catalan number $C_n(q)$ using the recursion
$$
C_{n+1}(q):=\sum\limits_{k=0}^n q^{(k+1)(n-k)} C_k(q) C_{n-k}(q)\qquad(n\ge 0)
$$
with $C_0(q):=1$. For combinatorial interpretations of this number
and further information see, e.g., \cite{FH, Butler, Sagan-Savage}
and sequence A138158 in~\cite{Sloane}.
In particular, $C_{n}(q)$ counts Dyck paths of length $n$
with respect to the area above the path;
see, e.g., \cite{FH}.


\begin{defn}\label{def:qt-catalan}
Define {\em $(q,t)$-Catalan numbers} by
$$
\tC_{n+1}(q,t):=\sum\limits_{k=0}^n q^k t^{n-k} \tC_k(q,t) \tC_{n-k}(q,t)\qquad(n\ge 0)
$$
with
$$
\tC_{0}(q,t) = 1.
$$
\end{defn}

\begin{observation}\label{obs.qt2}
For every $n\ge 0$,
\begin{equation}\label{qt-eq2}
\tC_{n}(q,t)=\tC_{n}(t,q)
\end{equation}
and
\begin{equation}\label{qt-eq1}
C_n(q)= q^{{n\choose 2}} \tC_n(q^{-1},1).
\end{equation}
\end{observation}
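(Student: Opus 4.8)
The statement to prove is Observation~\ref{obs.qt2}, which consists of two identities about the $(q,t)$-Catalan numbers $\tC_n(q,t)$ defined recursively in Definition~\ref{def:qt-catalan}, and the Carlitz--Riordan $q$-Catalan numbers $C_n(q)$. Both claims are purely formal consequences of the two recursions, so the natural approach is induction on $n$, checking the base case $n=0$ (where $\tC_0(q,t)=1=\tC_0(t,q)$ and $C_0(q)=1=q^0\tC_0(q^{-1},1)$) and then propagating through the recursions.

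For the symmetry \eqref{qt-eq2}, I would argue by strong induction on $n$. Writing out the recursion for $\tC_{n+1}(q,t)$ and applying the inductive hypothesis $\tC_k(q,t)=\tC_k(t,q)$ for all $k\le n$, the sum becomes $\sum_{k=0}^n q^k t^{n-k}\tC_k(t,q)\tC_{n-k}(t,q)$; reindexing via $k\mapsto n-k$ turns this into $\sum_{k=0}^n q^{n-k}t^k \tC_{n-k}(t,q)\tC_k(t,q)$, which is exactly the defining expression for $\tC_{n+1}(t,q)$. So the symmetry is immediate once one performs the substitution $k\leftrightarrow n-k$ and uses commutativity of multiplication.

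For \eqref{qt-eq1}, again by induction on $n$: assume $C_k(q)=q^{\binom k2}\tC_k(q^{-1},1)$ for all $k\le n$. Starting from the Carlitz--Riordan recursion $C_{n+1}(q)=\sum_{k=0}^n q^{(k+1)(n-k)}C_k(q)C_{n-k}(q)$, substitute the inductive hypothesis to get $\sum_{k=0}^n q^{(k+1)(n-k)} q^{\binom k2}q^{\binom{n-k}2}\tC_k(q^{-1},1)\tC_{n-k}(q^{-1},1)$. The key is the exponent bookkeeping: one must check that $(k+1)(n-k)+\binom k2+\binom{n-k}2 = \binom{n+1}2 + (-k)$, i.e.\ that after factoring out $q^{\binom{n+1}2}$ the residual power of $q$ on the $k$-th term is exactly $q^{-k}$, matching the factor $q^k$ appearing in $\tC_{n+1}(q^{-1},1)=\sum_k (q^{-1})^k\,1^{n-k}\,\tC_k(q^{-1},1)\tC_{n-k}(q^{-1},1)$. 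This identity of exponents is a routine computation: expand both sides and verify $(k+1)(n-k)+\tfrac{k(k-1)}2+\tfrac{(n-k)(n-k-1)}2 = \tfrac{(n+1)n}2 - k$. Granting it, the whole sum equals $q^{\binom{n+1}2}\tC_{n+1}(q^{-1},1)$, completing the step.

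The main (and only mild) obstacle is the exponent identity in \eqref{qt-eq1}; everything else is bookkeeping with the recursions. Care is also needed that the substitution $q\mapsto q^{-1}$ is legitimate --- since the $\tC_n$ are polynomials (hence the $C_n(q)$ are Laurent polynomials after the substitution, but in fact the $q^{\binom n2}$ prefactor clears denominators), there is no formal difficulty, but it is worth remarking that the identity is to be read as one between elements of $\ZZ[q,q^{-1}]$ (or, equivalently, after multiplying through, in $\ZZ[q]$).
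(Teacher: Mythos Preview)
Your proof is correct; the exponent identity $(k+1)(n-k)+\binom{k}{2}+\binom{n-k}{2}=\binom{n+1}{2}-k$ checks out and the rest is routine induction. The paper itself gives no proof of this Observation (it is simply asserted), so your argument by strong induction on the defining recursions is precisely the straightforward verification the authors leave to the reader.
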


\bigskip

\begin{defn}
For $w\in F_n$, denote by $\inv_R(w)$ the cardinality of $\Inv_R(w)$ and
by $\inv_L(w)$ the cardinality of $\Inv_L(w)$.
Let $\max(F_n)$ denote the set of all maximal elements in $F_n$.
\end{defn}

\begin{proposition}\label{qt-prop}
For every positive integer $n$,
\[
\sum\limits_{w \in \max(F_{n+1})} q^{\inv_R(w)} t^{\inv_L(w)} = \tC_{n}(q,t).
\]
\end{proposition}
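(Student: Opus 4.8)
The plan is to mimic the proof of Theorem~\ref{t.main3}, but now carrying along the refined statistics $\inv_R$ and $\inv_L$ and using the recursive structure of maximal words uncovered in Theorem~\ref{t.max_eq_alt}. Recall that every maximal element $w \in \max(F_{n+1})$ contains the edge $\{1, n+1\}$ (Corollary~\ref{1n}), and that by the recursive construction in the proof of Theorem~\ref{t.max_eq_alt} the word $w$ can be written uniquely as $w = (w_n, (1,n+1), w_1)$, where, after deleting the edge $\{1, n+1\}$ from the alternating non-crossing tree $G(w)$, the component containing $1$ yields (via the monotone relabeling) a maximal word $w_1 \in \max(F_{k+1})$ on the vertex set $[1, k+1]$ and the component containing $n+1$ yields a maximal word $w_n \in \max(F_{n-k+1})$ on the vertex set $[k+1, n+1]$, for a unique $1 \le k \le n$ (with the convention that $F_1$ contributes the empty word). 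This gives a bijection $\max(F_{n+1}) \leftrightarrow \bigsqcup_{k} \max(F_{k+1}) \times \max(F_{n-k+1})$, which already reproduces the Catalan recursion; I now need to track how $\inv_R$ and $\inv_L$ behave under it.

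\textbf{Key step: additivity of the statistics.} The heart of the argument is to show that for $w = (w_n, (1,n+1), w_1)$ as above,
\[
\inv_R(w) = \inv_R(w_n) + \inv_R(w_1) + k
\qquad\text{and}\qquad
\inv_L(w) = \inv_L(w_n) + \inv_L(w_1) + (n-k),
\]
where $k$ is the number of edges of $G(w)$ lying strictly inside the interval $[1, k+1]$ (equivalently, the number of vertices of the $1$-component minus one). For this I would use Remark~\ref{left-right-explanation}: an inversion of $w$ corresponds to a pair of nested or disjoint intervals $I_i, I_j$, being a right inversion when $I_i \subset I_j$, a left inversion when $I_j \subset I_i$, and neutral when $I_i \cap I_j = \emptyset$. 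The interval attached to the distinguished edge $\{1, n+1\}$ is the full interval $[1, n+1]$, which strictly contains every other interval; since $w$ is maximal, every such pair is an inversion (all pairs in $\phi(w)$ are inversions, by Lemma~\ref{interval2} with $\pi = \mathrm{id}$), and one must check from the position of $(1,n+1)$ in the word — it sits between the $w_n$-block and the $w_1$-block — which of the two orderings holds. The edges of $w_1$ precede $(1,n+1)$ in reading order... no: $(1,n+1)$ precedes the $w_1$-block. A short case analysis against Lemma~\ref{t.inversions} (cases (1)(i), (1)(ii) and (3)) shows that pairing $\{1,n+1\}$ with an edge of the $n$-component gives a right inversion and pairing it with an edge of the $1$-component gives a left inversion, or vice versa; the split into $k$ and $n-k$ follows once one fixes which component is which. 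Pairs of edges both inside one component retain their type under the monotone relabeling (intervals are carried to intervals order-isomorphically, and nesting/disjointness is preserved), so they contribute exactly $\inv_R(w_i)$ and $\inv_L(w_i)$. Pairs of edges in different components are disjoint as intervals, hence neutral, and contribute nothing to either statistic.

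\textbf{Assembling the generating function.} Granting the additivity above, summing $q^{\inv_R(w)} t^{\inv_L(w)}$ over $w \in \max(F_{n+1})$ and splitting according to the bijection yields
\[
\sum_{w \in \max(F_{n+1})} q^{\inv_R(w)} t^{\inv_L(w)}
= \sum_{k=1}^{n} q^{k} t^{n-k}
\Bigl(\sum_{w_n} q^{\inv_R(w_n)} t^{\inv_L(w_n)}\Bigr)
\Bigl(\sum_{w_1} q^{\inv_R(w_1)} t^{\inv_L(w_1)}\Bigr).
\]
Writing $f_m(q,t) := \sum_{w \in \max(F_{m+1})} q^{\inv_R(w)} t^{\inv_L(w)}$ and reindexing $k \mapsto k$ so that $w_1 \in \max(F_{k+1})$ contributes $f_k$ and $w_n \in \max(F_{n-k+1})$ contributes $f_{n-k}$, this reads $f_n(q,t) = \sum_{k=1}^{n} q^{k} t^{n-k} f_{n-k}(q,t) f_k(q,t)$, which after shifting the index is precisely the recursion of Definition~\ref{def:qt-catalan} for $\tC_{n}(q,t)$; the base case $f_0 = 1 = \tC_0(q,t)$ is immediate. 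By induction $f_n(q,t) = \tC_n(q,t)$, as claimed.

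\textbf{Main obstacle.} The delicate point is the bookkeeping of which of the two nested intervals is the ``outer'' one in reading order for pairs involving the distinguished edge $\{1, n+1\}$ — that is, getting the assignment of $k$ versus $n-k$ to the $q$- and $t$-exponents right (and consistent with the asymmetry between the two halves of the word $w = (w_n, (1,n+1), w_1)$). This requires a careful match against Lemma~\ref{t.inversions}: one must verify that for $j < \phi(w)^{-1}$ of the $(1,n+1)$-edge the relevant case is (1)(i) or (3)(i) (giving a right inversion), while for $j >$ that index it is (1)(ii) or (3)(ii) (giving a left inversion), and then confirm that the count of the former is $k$ and of the latter $n-k$ under the labeling convention chosen in the recursive decomposition. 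Everything else — preservation of inversion type under monotone relabeling, neutrality of cross-component pairs — is routine from Remark~\ref{left-right-explanation}.
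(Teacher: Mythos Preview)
Your approach is exactly the paper's: decompose a maximal $w$ at the mandatory factor $(1,n+1)$, observe that cross-block pairs are neutral, within-block pairs preserve their type under monotone relabeling, and pairs involving $(1,n+1)$ split into right and left, then read off the $\tC$-recursion. The direction you left open is resolved as follows: since $I_{\phi(w)(p)} = [1,n+1]$ (where $p$ is the position of the distinguished factor) contains every other interval, Remark~\ref{left-right-explanation} gives that prefix pairs (with letters of $w_n$, the $(n{+}1)$-component) are \emph{left} inversions and suffix pairs (with letters of $w_1$, the $1$-component) are \emph{right} inversions --- so your guess ``$\inv_R$ gains $k$ from the $1$-component'' is the correct one.

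There is, however, an off-by-one slip in the bookkeeping that propagates to your final recursion. After deleting $\{1,n+1\}$ the two components lie on \emph{disjoint} vertex sets $[1,m]$ and $[m+1,n+1]$; if $w_1 \in \max(F_{k+1})$ (so the $1$-component has $k+1$ vertices and $k$ edges) then the $(n{+}1)$-component has $n-k$ vertices, hence $w_n \in \max(F_{n-k})$, not $\max(F_{n-k+1})$, and the distinguished factor contributes $n-k-1$ (not $n-k$) to $\inv_L$. With this correction the recursion reads
\[
f_n \;=\; \sum_{k=0}^{n-1} q^{k}\, t^{\,n-1-k}\, f_k(q,t)\, f_{n-1-k}(q,t),
\]
which is precisely Definition~\ref{def:qt-catalan}. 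As you wrote it, $\sum_{k=1}^{n} q^k t^{n-k} f_k f_{n-k}$ contains $f_n$ on the right-hand side (the $k=n$ term) and is not a valid recursion; no index shift repairs this without first fixing the edge count.
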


\begin{proof}
By induction on $n$. For $n=1$ the claim obviously holds.

For the induction step recall that, by Corollary~\ref{1n},  any
maximal element in $w$ in $\W(F_{n+1})$ contains the factor
$(1,n+1)$.
Denote by $F_{n+1,k}$ the set of maximal elements in $\W(F_{n+1})$
whose $(k+1)$-st factor is $(1,n+1)$.
By the proof of Theorem~\ref{t.main3}, this set is characterized as follows:
$w=(t_1,\ldots,t_n) \in F_{n+1,k}$ if and only if
$t_1 t_2\cdots t_{k}$ is a reduced word for the cycle $(n-k,\dots, n)$,
where $\phi(t_1 t_2\cdots t_{k})=[n-1,\dots,n-k]$,
and $t_{k+2} t_{k+3}\cdots t_{n}$ is a
reduced word for the cycle $(1,2,\dots,n-k)$,
where $\phi(t_{k+2} t_{k+3}\cdots t_{n})=[n-k-1, n-k-2,\dots,1]$. 

\smallskip

For every $w=t_1\cdots t_n \in F_{n+1,k}$ the following holds: for
every $1\le i\le k$ there exist $n-k\le c< d\le n$ such that
$t_i=(c,d)$; for every $k+1< j\le n$ there exist $1\le a<b< n-k$
such that $t_j=(a,b)$. Thus, by
Definition~\ref{defn-LR-inversions},
all pairs $(t_i,t_j)$, with $i\le k$ and $j>k+1$,
 are neutral inversions; all pairs $(t_i, t_{k+1})$, $1\le i\le k$, are left
 inversions and all pairs $(t_{k+1}, t_j)$, $k+1<j\le n$, are
 right inversions.

\smallskip


This implies that, for every $0\le k\le n$,
\begin{eqnarray*}
& & \sum\limits_{w \in F_{n+1,k}} q^{\inv_R(w)} t^{\inv_L(w)} \\
&=& q^{n-k-1} t^k\cdot \sum\limits_{w \in \max(F_{k+1})} q^{\inv_R(w)}t^{\inv_L(w)} \cdot
\sum\limits_{w \in \max(F_{n-k+1})} q^{\inv_R(w)}t^{\inv_L(w)} \\
&=& q^{n-k-1} t^k \cdot \tC_k (q,t) \cdot \tC_{n-k} (q,t).
\end{eqnarray*}
The last equality follows from the induction hypothesis. Thus
\begin{eqnarray*}
\sum\limits_{w\in \max(F_{n+1})} q^{\inv_R(w)}t^{\inv_L(w)}
&=& \sum\limits_{k=0}^n \sum\limits_{w\in F_{n+1,k}} q^{\inv_R(w)}t^{\inv_L(w)} \\
&=& \sum\limits_{k=0}^n  q^{n-k} t^k\cdot \tC_k (q,t)\cdot \tC_{n-k} (q,t)=\tC_{n+1}(q,t).
\end{eqnarray*}
The last equality follows from the defining recursion of $\tC_n(q,t)$
together with (\ref{qt-eq2}).

\end{proof}



\begin{corollary}\label{t.main3-q}
For every positive integer $n$,
\[
\sum\limits_{w\in \max(F_{n+1})} q^{\inv_R(w)} = q^{n\choose 2} C_{n}(q^{-1})
\]
or equivalently
\[
\sum\limits_{w \in \max(F_{n+1})} q^{\inv_{LN}(w)} = C_{n}(q).
\]
\end{corollary}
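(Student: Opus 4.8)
The plan is to deduce both identities directly from Proposition~\ref{qt-prop} by specializing its second variable and then keeping track of the total number of inversions. First I would set $t = 1$ in Proposition~\ref{qt-prop}, which immediately yields
\[
\sum_{w \in \max(F_{n+1})} q^{\inv_R(w)} = \tC_n(q,1).
\]
Next I would invoke Observation~\ref{obs.qt2}: equation~(\ref{qt-eq1}) reads $C_n(q) = q^{{n \choose 2}} \tC_n(q^{-1},1)$, and substituting $q \mapsto q^{-1}$ gives $\tC_n(q,1) = q^{{n \choose 2}} C_n(q^{-1})$. Combining this with the previous display proves the first claimed identity.

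For the equivalent ``$\inv_{LN}$'' form I would read $\inv_{LN}(w) := \inv_L(w) + \inv_N(w)$. The key point is that every maximal $w \in \max(F_{n+1})$ has $\phi(w) = \pi_0 = [n,\ldots,1] \in S_n$, by Lemma~\ref{interval2} applied to $\pi = \mathrm{id}$ (as already noted in the proof of Observation~\ref{t.max_to_alt}); hence $\inv(\phi(w)) = {n \choose 2}$ is independent of $w$. By Observation~\ref{t.LN}, $\Inv(w)$ is the disjoint union of $\Inv_R(w)$, $\Inv_L(w)$ and $\Inv_N(w)$, so
\[
\inv_{LN}(w) = {n \choose 2} - \inv_R(w) \qquad (w \in \max(F_{n+1})).
\]
Substituting $q \mapsto q^{-1}$ into the first identity gives $\sum_{w} q^{-\inv_R(w)} = q^{-{n \choose 2}} C_n(q)$, and therefore
\[
\sum_{w \in \max(F_{n+1})} q^{\inv_{LN}(w)} = q^{{n \choose 2}} \sum_{w \in \max(F_{n+1})} q^{-\inv_R(w)} = C_n(q),
\]
which is the second identity and also shows the two are literally equivalent.

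Since every step is either a substitution into Proposition~\ref{qt-prop} or Observation~\ref{obs.qt2}, or a one-line bookkeeping argument, there is essentially no obstacle here: the statement is a genuine corollary. The only point worth spelling out with care is the constancy of $\inv(\phi(w))$ over $\max(F_{n+1})$ — this is exactly what turns the change of statistic $\inv_R \leftrightarrow \inv_{LN}$ into the clean $q \leftrightarrow q^{-1}$ reflection that makes the two displayed formulas equivalent.
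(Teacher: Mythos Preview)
Your argument is correct and matches the paper's own proof, which simply cites Proposition~\ref{qt-prop} together with equation~(\ref{qt-eq1}); you have just spelled out the substitution $t=1$ and the passage $q \mapsto q^{-1}$ more explicitly. Your additional paragraph justifying the equivalence of the two displayed formulas via $\inv_{LN}(w) = \binom{n}{2} - \inv_R(w)$ for maximal $w$ is also correct and is a detail the paper leaves implicit.
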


\begin{proof}
Proposition~\ref{qt-prop} together with (\ref{qt-eq1}).

\end{proof}


\subsection{Refined enumeration of alternating non-crossing trees}

Denote the set of alternating non-crossing trees of order $n$ by
${\mathcal T}_n$.

\begin{defn}\label{edge_pairs}
A pair of distinct edges $\{e, f\}$ in a non-crossing alternating tree $T$ is
\begin{itemize}
\item
{\em neutral} if $e = \{c,d\}$ and $f = \{a,b\}$ for some $a < b < c < d$;
\item
{\em right} if $e = \{a,d\}$ and $f = \{b,c\}$ for some  $a \le b < c < d$ and
the edge $\{b,c\}$ is in the component of $a$ in $T \setminus \{\{a,d\}\}$ 
(for example, whenever $a = b$); and
\item
{\em left} if $e = \{b,c\}$ and $f = \{a,d\}$ for some $a < b < c \le d$ and
the edge $\{b,c\}$ is in the component of $d$ in $T \setminus \{\{a,d\}\}$ 
(for example, whenever $c = d$).
\end{itemize}
\end{defn}

\medskip

\noindent Denote the cardinality of right and left and pairs in an
alternating non-crossing tree $T\in {\mathcal T}_n$ by $\rp(T)$
and $\lp(T)$, respectively.

\begin{proposition}\label{qt-prop-trees}
For every positive integer $n$,
\[
\sum\limits_{T \in {\mathcal T}_n} q^{\rp(T)} t^{\lp(T)}=
\tC_{n-1}(q,t).
\]
\end{proposition}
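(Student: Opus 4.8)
The plan is to transport the statement through the bijection $f\colon \max(F_n)\to {\mathcal T}_n$ of Theorem~\ref{t.max_eq_alt}, $w\mapsto G(w)$, and then invoke Proposition~\ref{qt-prop} (after re-indexing $n+1\rightsquigarrow n$). Concretely, it suffices to prove that for every maximal element $w\in \max(F_n)$ one has $\rp(G(w)) = \inv_R(w)$ and $\lp(G(w)) = \inv_L(w)$; then
\[
\sum_{T\in{\mathcal T}_n} q^{\rp(T)}t^{\lp(T)}
= \sum_{w\in\max(F_n)} q^{\inv_R(w)}t^{\inv_L(w)}
= \tC_{n-1}(q,t),
\]
the last equality being Proposition~\ref{qt-prop} with $n$ replaced by $n-1$.

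First I would set up the dictionary between edges of $G(w)$ and letters of $w$: if $w=(t_1,\dots,t_{n-1})$ with $t_r=(a_r,b_r)$, $a_r<b_r$, then the $r$-th edge of $G(w)$ is $\{a_r,b_r\}$, and $\phi(w)^{-1}(i)=r$ precisely when $\phi(w)(r)=i$. Because $w$ is maximal, $\phi(w)=\pi_0=[n-1,\dots,1]$ by Lemma~\ref{interval2} (with $\pi=\mathrm{id}$), so \emph{every} pair $i<j$ with $\phi(w)^{-1}(i)>\phi(w)^{-1}(j)$ is an inversion; in other words, $\Inv(w)$ consists of all pairs $(i,j)$, $i<j$, and the partition $\Inv(w)=\Inv_R(w)\sqcup\Inv_L(w)\sqcup\Inv_N(w)$ of Observation~\ref{t.LN} is just a partition of the set of all $2$-element subsets of $\{1,\dots,n-1\}$, hence of the set of all $2$-element subsets of edges of $G(w)$ once we identify an edge with its $\phi$-value. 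Then I would compare, case by case, Definition~\ref{defn-LR-inversions} (the classification of a pair $(i,j)\in\Inv(w)$ as right/left/neutral via the shapes of $t_{\phi(w)^{-1}(j)}$ and $t_{\phi(w)^{-1}(i)}$, as sharpened through Remark~\ref{left-right-explanation} and Lemma~\ref{phi.local-range-2}) with Definition~\ref{edge_pairs} (the classification of a pair of edges $\{e,f\}$ in the tree as right/left/neutral). The nested case $a<b<c<d$ with $t=(a,d)$ and $t'=(b,c)$ is exactly where the component condition enters: in Definition~\ref{defn-LR-inversions} the pair is a right inversion when $t_{\phi(w)^{-1}(j)}=(a,d)$ and $t_{\phi(w)^{-1}(i)}=(b,c)$, which by Lemma~\ref{t.inversions}(3)(i) corresponds to $\{b,c\}$ lying in $T_a$ — precisely the condition in the ``right'' clause of Definition~\ref{edge_pairs}; symmetrically for left inversions via Lemma~\ref{t.inversions}(3)(ii). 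The remaining cases ($\{a,b\}\subset\{b,c\}$ nested without a fourth point, and $\{a,b\}$ disjoint from $\{c,d\}$) match the ``$a=b$/$c=d$'' degenerate sub-clauses and the ``neutral'' clause directly, using Remark~\ref{left-right-explanation}'s interval picture ($I_i\subset I_j$, $I_j\subset I_i$, $I_i\cap I_j=\emptyset$).

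I expect the main obstacle to be bookkeeping the orientation/order of the pair: Definition~\ref{defn-LR-inversions} records inversions as \emph{ordered} pairs $(i,j)$ with $i<j$ and then reads off which of $t_{\phi(w)^{-1}(i)},t_{\phi(w)^{-1}(j)}$ is the ``larger'' interval, whereas Definition~\ref{edge_pairs} records \emph{ordered} pairs of edges $(e,f)$ with a fixed convention on which is the ``containing'' edge; one must check that the identification $\text{edge}\leftrightarrow\phi(w)$-value sends the inversion's order to the tree-pair's order consistently, so that ``right inversion'' really maps to ``right pair'' and not to ``left pair''. This is a finite verification with no deep content once the $a=b$ and $c=d$ boundary subcases are tracked carefully (these are exactly the non-nested cases where the tree-component condition is automatic, flagged parenthetically in Definition~\ref{edge_pairs}). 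Having established $\rp(G(w))=\inv_R(w)$ and $\lp(G(w))=\inv_L(w)$ for all $w\in\max(F_n)$, and knowing from Theorem~\ref{t.max_eq_alt} that $w\mapsto G(w)$ is a bijection $\max(F_n)\to{\mathcal T}_n$, the claimed generating-function identity follows immediately from Proposition~\ref{qt-prop}.
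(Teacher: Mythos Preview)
Your proposal is correct and follows essentially the same route as the paper: both transport the sum through the bijection of Theorem~\ref{t.max_eq_alt} by checking that right/left pairs of edges in $G(w)$ correspond to right/left inversions of $w$ for $w\in\max(F_n)$, and then invoke Proposition~\ref{qt-prop}. The paper's argument is terser (it simply asserts the correspondence, noting that in each clause of Definition~\ref{edge_pairs} the edge $e$ precedes $f$ in the canonical order given by $g$), whereas you spell out the case analysis via Lemma~\ref{t.inversions} and Remark~\ref{left-right-explanation}; but the content is the same.
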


\begin{proof}
Each pair of distinct edges is either neutral, right or left.
Moreover, the edge $e$ precedes $f$ in the canonical linear order
on $T$ defined by the map $g$ as in the proof of
Theorem~\ref{t.max_eq_alt}.  
Hence, a pair of edges in an alternating non-crossing tree is a
right (left) pair if and only if the corresponding letters in the
word $v = g(G)$ form a right (left) inversion. It follows that
\[
\sum\limits_{T \in {\mathcal T}_n} q^{\rp(T)} t^{\lp(T)}=
\sum\limits_{w \in \max(F_{n})} q^{\inv_R(w)} t^{\inv_L(w)}.
\]
Proposition~\ref{qt-prop} completes the proof.
\end{proof}

\subsection{Another combinatorial interpretation}

Consider the set $D(n)$ of Dyck paths of length $2n$.
Each $p\in D(n)$ is a sequence $(\e_1,\ldots,\e_{2n})$,
with
$$
\e_i \in \{1,-1\}\qquad(\forall i),
$$
such that
$$
\e_1 + \ldots + \e_{2n} = 0
$$
and
$$
\e_1 + \ldots + \e_{k} \ge 0 \qquad(\forall k).
$$
Geometrically, $p$ corresponds to a lattice path from $(0,0)$ to $(2n,0)$,
with admissible steps $(1,1)$ and $(1,-1)$,
which never goes below the horizontal line ({\em baseline}, in the sequel) connecting its endpoints.
\begin{defn}
With each $p\in D(n)$ associate the following two statistics:
\begin{eqnarray*}
\area(p)
&:=& {n \choose 2} - \#\{(i,j)\,|\,1\le i < j\le 2n,\,-\e_i=\e_j=1\}\\
&\,=& -{n+1 \choose 2} + \#\{(i,j)\,|\,1\le i < j\le 2n,\,\e_i=-\e_j=1\},\\
\bmaj(p)
&:=& \sum_{i\,:\,-\e_i=\e_{i+1}=1} \max\{k\,|\,(\forall 1\le t\le 2k)\,\e_{i+1}+\ldots+\e_{i+t}\ge 0\}.
\end{eqnarray*}
\end{defn}

Geometrically, $\area(p)$ is the area between the path $p$ and the
``horizontal'' path $p_0=(1,-1,1,-1,...)$, where the square with
vertices $(0,0)$, $(1,1)$, $(2,0)$ and $(1,-1)$ is considered to
have area $1$; while $\bmaj(p)$ is the sum, over all internal
local minima of the path $p$, of half the length of the maximal
horizontal segment starting at the minimum point and stretching to
the right without crossing (but possibly touching) $p$. Note that
$\area(p)$ is (a linear function of) the inversion number of the
sequence $p$.



\begin{proposition}\label{dyck}
For every positive integer $n$
$$
\sum\limits_{p\in D(n)} q^{\area(p)} t^{\bmaj(p)}= \tC_n(q,t).
$$
\end{proposition}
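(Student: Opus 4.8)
The plan is to prove this by establishing a recursion for the left-hand side that matches the defining recursion of $\tC_n(q,t)$ in Definition~\ref{def:qt-catalan}, together with the symmetry~(\ref{qt-eq2}). The base case $n=0$ (or $n=1$) is immediate. For the induction step, I would decompose a Dyck path $p \in D(n+1)$ according to its first return to the baseline: write $p = (1)\, p'\, (-1)\, p''$ where $p' \in D(k)$ and $p'' \in D(n-k)$ for some $0 \le k \le n$, the step $(1)$ being the initial up-step and $(-1)$ being the step that completes the first return. This is the standard first-return decomposition and it is a bijection $D(n+1) \leftrightarrow \bigsqcup_{k=0}^{n} D(k) \times D(n-k)$.

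The heart of the argument is to track how $\area$ and $\bmaj$ behave under this decomposition. For $\area$, since it is (a linear function of) the inversion number of the $\pm1$ sequence, I would count the new inverted pairs created by concatenation: pairs with one index in the initial arch $(1)\cdots(-1)$ enclosing $p'$ and one index in $p''$, plus the contribution of the enclosing up/down steps themselves. A direct count should give $\area(p) = \area(p') + \area(p'') + (\text{something like } k \text{ or } n-k)$; I would verify the exact exponent by checking it reproduces the $q^k$ (or $q^{n-k}$) factor in the recursion, using the freedom afforded by~(\ref{qt-eq2}) to absorb any discrepancy between $k$ and $n-k$. For $\bmaj$: the internal local minima of $p$ are exactly the internal local minima of $p'$ (shifted, and still measured correctly since the arch around $p'$ does not extend their rightward horizontal reach — here one must be slightly careful), the internal local minima of $p''$, and possibly one new minimum at the point where $(-1)$ meets $p''$ — namely the first-return point, which is a local minimum of $p$ precisely when $p''$ is nonempty and starts with an up-step, i.e. when $n-k \ge 1$. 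That new minimum contributes a term equal to half the length of the maximal horizontal segment to its right inside $p$, which is exactly the length of $p''$ over two, i.e. $n-k$. So I expect $\bmaj(p) = \bmaj(p') + \bmaj(p'') + (n-k)$ when this minimum is present, and the cases where it is absent should also work out to give the same formula (when $n-k=0$ the extra term vanishes anyway).

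Assembling these, the first-return bijection yields
\[
\sum_{p \in D(n+1)} q^{\area(p)} t^{\bmaj(p)}
= \sum_{k=0}^{n} q^{?} t^{n-k} \Bigl(\sum_{p' \in D(k)} q^{\area(p')} t^{\bmaj(p')}\Bigr)\Bigl(\sum_{p'' \in D(n-k)} q^{\area(p'')} t^{\bmaj(p'')}\Bigr),
\]
and by induction the two inner sums are $\tC_k(q,t)$ and $\tC_{n-k}(q,t)$; comparing with Definition~\ref{def:qt-catalan} and using~(\ref{qt-eq2}) to swap $k \leftrightarrow n-k$ if needed identifies the $q$-exponent as $k$ and closes the induction. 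The main obstacle I anticipate is pinning down the two exponent formulas exactly — especially confirming that enclosing $p'$ in an arch does not alter any of its $\bmaj$-contributions (the rightward horizontal segments from $p'$'s internal minima stay within $p'$ and hence within $p$), and getting the $\area$ bookkeeping right including the contribution of the two new extreme steps; the symmetry~(\ref{qt-eq2}) gives a useful sanity check and some slack, since it lets $q^k$ and $q^{n-k}$ be used interchangeably in the recursion.
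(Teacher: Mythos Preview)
Your proposal is correct and follows essentially the same argument as the paper: first-return decomposition $p = (1)*p'*(-1)*p''$ with $p'\in D(k)$, $p''\in D(n-k)$, together with the identities $\area(p)=k+\area(p')+\area(p'')$ and $\bmaj(p)=(n-k)+\bmaj(p')+\bmaj(p'')$, yielding exactly the recursion of Definition~\ref{def:qt-catalan}. The paper simply asserts the $\area$-increment is $k$ (a short direct check), so your appeal to the symmetry~(\ref{qt-eq2}) is unnecessary; your extra care about whether the arch around $p'$ could lengthen a $\bmaj$-segment is warranted but, as you suspected, the closing $(-1)$ step immediately blocks any such extension.
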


Note that for $t=1$ this is a classical result~\cite{FH}.

\begin{proof}
It suffices to show that the LHS, to be denoted $C'_n(q,t)$,
satisfies the recursion in Definition~\ref{def:qt-catalan};
it clearly satisfies $C'_0(q,t) = 1$.

Let $p\in D(n+1)$, and define $k := \min\{t\ge 0\,|\,\e_1+\ldots+\e_{2t+2} = 0\}$,
so that $2k+2$ is the first time that $p$ returns to the baseline. Clearly $0\le k\le n$.
We can write $p$ as the concatenation of sequences
$$
p = (1) * p' * (-1) * p'',
$$
where $p'\in D(k)$ and $p''\in D(n-k)$. Now clearly
$$
\area(p) = k + \area(p') + \area(p'')
$$
and
$$
\bmaj(p) = (n-k) + \bmaj(p') + \bmaj(p''),
$$
since the path $p$ has an internal local minimum at $(2k+2,0)$,
in addition to the internal local minima of $p'$ and $p''$.
Thus
\begin{eqnarray*}
C'_{n+1}(q,t) = \sum_{k=0}^{n} q^k t^{n-k} C'_k(q,t) C'_{n-k}(q,t),
\end{eqnarray*}
and this completes the proof.
\end{proof}

\begin{corollary}
$\area$ and $\bmaj$ are equidistributed:
$$
\sum\limits_{p\in D(n)} q^{\area(p)} =
\sum\limits_{p\in D(n)} q^{\bmaj(p)} =
q^{n \choose 2} C_n(q^{-1})
$$
\end{corollary}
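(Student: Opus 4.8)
The plan is to deduce both equalities from Proposition~\ref{dyck} by specialization and a symmetry argument. First I would set $t=1$ in Proposition~\ref{dyck} to obtain $\sum_{p\in D(n)} q^{\area(p)} = \tC_n(q,1)$, and similarly set $q=1$ to obtain $\sum_{p\in D(n)} q^{\bmaj(p)} = \tC_n(1,q)$. By the symmetry relation (\ref{qt-eq2}) in Observation~\ref{obs.qt2}, $\tC_n(1,q) = \tC_n(q,1)$, so the two generating functions already agree, which gives equidistribution of $\area$ and $\bmaj$ once the final evaluation is established.

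Next I would identify $\tC_n(q,1)$ with $q^{\binom{n}{2}} C_n(q^{-1})$. This is exactly relation (\ref{qt-eq1}) of Observation~\ref{obs.qt2}, namely $C_n(q) = q^{\binom{n}{2}} \tC_n(q^{-1},1)$; substituting $q \mapsto q^{-1}$ and rearranging yields $\tC_n(q,1) = q^{\binom{n}{2}} C_n(q^{-1})$. Combining this with the two specializations from the previous paragraph gives
$$
\sum_{p\in D(n)} q^{\area(p)} = \sum_{p\in D(n)} q^{\bmaj(p)} = q^{\binom{n}{2}} C_n(q^{-1}),
$$
which is the assertion of the corollary.

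I do not expect a genuine obstacle here, since essentially all the content is already packaged in Proposition~\ref{dyck} and Observation~\ref{obs.qt2}; the only care needed is bookkeeping of the variable substitutions and making sure the roles of $q$ and $t$ are tracked correctly when invoking the symmetry (\ref{qt-eq2}). If one prefers a self-contained route avoiding (\ref{qt-eq2}), an alternative is to observe directly that the involution on $D(n)$ given by reflecting a Dyck path across the vertical line through its midpoint (equivalently, reversing the step sequence and negating) swaps the statistics $\area$ and $\bmaj$ up to the additive constants built into their definitions; verifying that this reflection interchanges internal local minima with the appropriate ascent positions and matches the two count formulas is a short check, but it is strictly optional given that (\ref{qt-eq2}) is already available.
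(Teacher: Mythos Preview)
Your main argument is correct and is exactly the paper's proof: specialize Proposition~\ref{dyck} and apply both identities in Observation~\ref{obs.qt2}.

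One caution about the optional reflection route you sketch at the end: it does not work as stated. For $n=2$ the two Dyck paths $(1,1,-1,-1)$ and $(1,-1,1,-1)$ are each fixed by the reverse-and-negate involution, yet they have $(\area,\bmaj)=(1,0)$ and $(0,1)$ respectively, so this involution cannot interchange $\area$ and $\bmaj$. The equidistribution here genuinely comes from the recursive symmetry~(\ref{qt-eq2}) rather than from a simple pointwise involution on paths.
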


\begin{proof}
Proposition~\ref{dyck} together with both parts of Observation~\ref{obs.qt2}.
\end{proof}

\begin{corollary}
$$
\sum\limits_{\{w\in F_{n+1}:\ \rank(w)={n\choose 2}\}}
q^{\inv_R(w)} t^{\inv_L(w)}=\sum\limits_{p\in D(n)} q^{\area(p)}
t^{\bmaj(p)}.
$$
\end{corollary}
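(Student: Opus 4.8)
The plan is to combine the two results just obtained. By Proposition~\ref{qt-prop}, the left-hand side equals $\tC_n(q,t)$, since $\max(F_{n+1})$ is precisely the set $\{w \in F_{n+1} : \rank(w) = \binom{n}{2}\}$: indeed, by Lemma~\ref{t.rank_eq_inv} one has $\rank(w) = \inv(\phi(w))$, and $\inv(\phi(w))$ is maximal (equal to $\binom{n}{2}$, the number of inversions of the longest element $\pi_0 = [n,\ldots,1] \in S_n$) exactly when $\phi(w) = \pi_0$, which by Lemma~\ref{interval2} (applied with $\pi = id$) characterizes the maximal elements of $\W(F_{n+1})$. On the other side, by Proposition~\ref{dyck} the sum over $D(n)$ equals $\tC_n(q,t)$ as well. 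Hence both sides of the claimed identity are equal to $\tC_n(q,t)$, and the corollary follows.

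More explicitly, the steps in order are: first, verify the set equality $\{w \in F_{n+1} : \rank(w) = \binom{n}{2}\} = \max(F_{n+1})$, which is the only point requiring any argument and uses Lemma~\ref{t.rank_eq_inv} together with the fact that the maximal possible value of $\inv$ on $S_n$ is $\binom{n}{2}$, attained uniquely at $\pi_0$; second, invoke Proposition~\ref{qt-prop} to rewrite the left-hand side as $\tC_n(q,t)$; third, invoke Proposition~\ref{dyck} to rewrite the right-hand side as $\tC_n(q,t)$; fourth, conclude equality. No new combinatorial construction is needed — in particular, one does not need a direct bijection between the words of maximal rank in $F_{n+1}$ and the Dyck paths in $D(n)$, although the statement does assert that the statistic pair $(\inv_R, \inv_L)$ on the former matches $(\area, \bmaj)$ on the latter.

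There is essentially no obstacle here; the only mild subtlety is the normalization check in the first step, ensuring that the rank condition $\rank(w) = \binom{n}{2}$ really does single out the maximal elements (as opposed to, say, elements of near-maximal rank), which is immediate once one notes that $\W(F_{n+1})$ is graded by $\rank$ and that $\rank = \inv \circ \phi$ takes its largest value precisely on $\phi^{-1}(\pi_0)$. One could alternatively phrase the whole corollary as the observation that the two preceding propositions compute the same polynomial $\tC_n(q,t)$, with the set-theoretic identification of index sets being a one-line remark.

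\begin{proof}
By Lemma~\ref{t.rank_eq_inv}, $\rank(w) = \inv(\phi(w))$ for every $w \in F_{n+1}$, and $\inv(\phi(w)) \le \binom{n}{2}$ with equality if and only if $\phi(w) = \pi_0 = [n, \ldots, 1] \in S_n$. By Lemma~\ref{interval2} (with $\pi = id$), this happens exactly when $w \in \max(F_{n+1})$. Hence
\[
\{w \in F_{n+1} :\ \rank(w) = {\textstyle\binom{n}{2}}\} = \max(F_{n+1}),
\]
and by Proposition~\ref{qt-prop} the left-hand side of the claimed identity equals $\tC_n(q,t)$. By Proposition~\ref{dyck}, the right-hand side also equals $\tC_n(q,t)$. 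The corollary follows.
\end{proof}
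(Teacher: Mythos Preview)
Your proof is correct and follows the same approach as the paper, which simply cites Proposition~\ref{qt-prop} together with Proposition~\ref{dyck}. You add the justification that $\{w \in F_{n+1} : \rank(w) = \binom{n}{2}\} = \max(F_{n+1})$, which the paper leaves implicit; note that Lemma~\ref{interval2} gives only the implication ``maximal $\Rightarrow \phi(w) = \pi_0$'', while the converse follows from the rank bound you already established (so your argument is complete, though the citation covers only one direction).
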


\begin{proof}
Proposition~\ref{qt-prop} together with Proposition~\ref{dyck}.
\end{proof}

\bigskip

\section{Radius and diameter}%
\label{section:diameter}

\begin{defn}
Let $G = (V,E)$ be a finite connected undirected graph, and let $d(\cdot,\cdot)$
be the corresponding metric on its vertex set $V$.
The {\em radius} of $G$ is
\[
\radius(G) := \min_{v \in V} \max_{w \in V} d(v,w)
\]
and its {\em diameter} is
\[
\diameter(G) := \max_{v,w \in V} d(v,w).
\]
\end{defn}

Recall the Hurwitz graph $G_T(n)$ (Definition~\ref{d.Hurwitz_graph}), and let
$\Cayley(S_{n-1})$ be the (right) Cayley graph of the group $S_{n-1}$ with respect to its
standard Coxeter generators.

\begin{lemma}\label{t.phi_contracts}
The map $\phi : G_T(n) \to \Cayley(S_{n-1})$ is a contraction:
\[
d(\phi(v), \phi(w)) \le d(v,w)\qquad(\forall v, w \in F_n).
\]
\end{lemma}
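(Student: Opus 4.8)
The plan is to show that any single edge step in $G_T(n)$ maps, under $\phi$, to either a single edge step in $\Cayley(S_{n-1})$ or to a trivial (zero-length) step, and then to conclude by concatenating along a geodesic. Concretely, given a geodesic $v = v_0, v_1, \ldots, v_d = w$ in $G_T(n)$ of length $d = d(v,w)$, I would invoke the Observation that each edge $\{v_{m-1}, v_m\}$ has $v_m = R_j(v_{m-1})$ or $v_m = L_j(v_{m-1})$ for some $1 \le j \le n-2$. By Lemma~\ref{t.main-lemma}, in either case $\phi(v_m) \in \{\phi(v_{m-1}),\, \phi(v_{m-1}) s_j\}$. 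Hence $\phi(v_{m-1})$ and $\phi(v_m)$ are either equal or adjacent in $\Cayley(S_{n-1})$, so $d(\phi(v_{m-1}), \phi(v_m)) \le 1$.

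Summing over $m = 1, \ldots, d$ and using the triangle inequality in $\Cayley(S_{n-1})$ then gives
\[
d(\phi(v), \phi(w)) \le \sum_{m=1}^{d} d(\phi(v_{m-1}), \phi(v_m)) \le \sum_{m=1}^{d} 1 = d = d(v,w),
\]
which is exactly the claimed contraction property. The argument is uniform in $v, w$, so the inequality holds for all pairs in $F_n$.

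There is essentially no obstacle here: the only thing to be careful about is the bookkeeping that Lemma~\ref{t.main-lemma} genuinely covers \emph{both} operators $R_j$ and $L_j$ (it does, as stated), so that every edge type in $G_T(n)$ — whether it survives in the Hasse diagram of $\W(F_n)$ or is one of the deleted edges of Corollary~\ref{t.Hasse} — still maps to a step of length at most one. One might also note, as a sanity check, that $\phi$ need not be surjective on edges in a length-preserving way (the "exceptional" edges collapse), which is precisely why we only get a contraction and not an isometry; but nothing more is needed for the statement. The proof is short and I would present it essentially as the displayed inequality above, preceded by the one-line justification via Lemma~\ref{t.main-lemma}.
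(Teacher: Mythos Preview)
Your proof is correct and follows essentially the same approach as the paper: reduce to the case $d(v,w)=1$, observe that then $v=R_j(w)$ or $v=L_j(w)$ for some $j$, and invoke Lemma~\ref{t.main-lemma} to conclude $\phi(v)\in\{\phi(w),\phi(w)s_j\}$. The paper leaves the triangle-inequality reduction implicit in the phrase ``it suffices to show,'' whereas you spell it out, but the arguments are identical.
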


\begin{proof}
It suffices to show that if $d(v,w) = 1$ then $d(\phi(v), \phi(w)) \le 1$.
Indeed, if $d(v,w) = 1$ in $G_T(n)$
then either $v = R_j(w)$ or $v = L_j(w)$ for some $j$.
Thus, by Lemma~\ref{t.main-lemma}, $\phi(v) \in \{\phi(w), \phi(w) s_j\}$,
so that indeed $d(\phi(v), \phi(w)) \le 1$.

\end{proof}

\begin{theorem}
The radius of the Hurwitz graph
\[
\radius(G_T(n)) = {n-1 \choose 2}.
\]
\end{theorem}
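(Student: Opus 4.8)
The plan is to prove the two inequalities $\radius(G_T(n)) \le \binom{n-1}{2}$ and $\radius(G_T(n)) \ge \binom{n-1}{2}$ separately, using the vertex $e$ as the natural center candidate. For the upper bound, I would show $\max_{w \in F_n} d(e, w) \le \binom{n-1}{2}$. Given any $w \in F_n$, Lemma~\ref{t.rank_eq_inv} gives $\rank(w) = \inv(\phi(w))$, and following a saturated chain down from $w$ to $e$ in $\W(F_n)$ (which exists because $e$ is the unique minimum by Fact~\ref{t.minimum}, and the down operators realize such a chain) uses exactly $\rank(w)$ edges of $G_T(n)$. Since $\inv(\phi(w)) \le \binom{n-1}{2}$ for any permutation in $S_{n-1}$, with equality only when $\phi(w) = \pi_0$, we get $d(e,w) \le \rank(w) \le \binom{n-1}{2}$. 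Hence $\radius(G_T(n)) \le \max_w d(e,w) \le \binom{n-1}{2}$.

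For the lower bound, I need to show that \emph{every} vertex $v \in F_n$ has some $w$ with $d(v,w) \ge \binom{n-1}{2}$; it then follows that $\min_v \max_w d(v,w) \ge \binom{n-1}{2}$. The idea is to use the contraction $\phi : G_T(n) \to \Cayley(S_{n-1})$ from Lemma~\ref{t.phi_contracts}: for any $v$, pick $w$ so that $\phi(w)$ is ``antipodal enough'' to $\phi(v)$ in $\Cayley(S_{n-1})$. Concretely, I would use the existence of a maximal element of $\W(F_n)$: by Theorem~\ref{t.main2} (together with Lemmas~\ref{interval1} and~\ref{interval2}) every maximal interval $[e,w_0]$ maps isomorphically onto $\W(S_{n-1})$ via $\phi$, and one checks there is $w_0 \in F_n$ with $\phi(w_0) = \pi_0$. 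Now given any $v$, let $\pi = \phi(v)$; choose $w_0'$ a maximal element (possibly depending on $v$) realizing $\phi(w_0') = \pi_0$, or more carefully an element whose $\phi$-image is at distance $\binom{n-1}{2}$ from $\pi$ in $\Cayley(S_{n-1})$. Since the diameter of $\Cayley(S_{n-1})$ is $\binom{n-1}{2}$ and is achieved from \emph{every} vertex $\pi$ (namely $d(\pi, \pi \pi_0) = \binom{n-1}{2}$, as $w_0(\pi\pi_0) = \inv(\pi\pi_0)$ and $\pi\pi_0$ is the unique longest element relative to $\pi$ — more precisely $d(\pi,\rho)$ ranges up to $\binom{n-1}{2}$ and the max is attained), there is $\rho \in S_{n-1}$ with $d(\pi,\rho) = \binom{n-1}{2}$. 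Picking any $w \in \phi^{-1}(\rho)$ (nonempty since $\phi$ is onto $S_{n-1}$), Lemma~\ref{t.phi_contracts} gives $d(v,w) \ge d(\phi(v),\phi(w)) = d(\pi,\rho) = \binom{n-1}{2}$.

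Combining the two bounds yields $\radius(G_T(n)) = \binom{n-1}{2}$. The main obstacle I anticipate is the lower bound bookkeeping: one must verify that from an arbitrary permutation $\pi \in S_{n-1}$ the graph distance in $\Cayley(S_{n-1})$ to the farthest point is exactly $\binom{n-1}{2}$ — this is the standard fact that $\max_{\rho} \ell_S(\pi^{-1}\rho) = \ell_S(\pi_0) = \binom{n-1}{2}$, attained at $\rho = \pi \pi_0$ (or $\pi_0 \pi$, depending on left/right conventions) — and that $\phi$ is surjective so the preimage $\phi^{-1}(\rho)$ is nonempty, which is Lemma~\ref{interval1}/\ref{interval2}. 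The upper bound, by contrast, is essentially immediate from the rank formula and $\inv \le \binom{n-1}{2}$.
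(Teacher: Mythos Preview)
Your proposal is correct and follows essentially the same route as the paper: for the lower bound you use the contraction Lemma~\ref{t.phi_contracts} together with surjectivity of $\phi$ to find, for each $v$, a $w$ with $\phi(w)=\phi(v)\pi_0$, exactly as the paper does. For the upper bound you invoke Lemma~\ref{t.rank_eq_inv} directly to get $d(e,w)\le\rank(w)=\inv(\phi(w))\le\binom{n-1}{2}$, whereas the paper phrases this via Theorem~\ref{t.main2} (the maximal-interval isomorphism); these are two ways of packaging the same computation, since by Definition~\ref{d.order} $\rank(w)$ \emph{is} $d(e,w)$ in $G_T(n)$.
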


\begin{proof}
By Definition~\ref{d.order},  only edges connecting vertices of the same rank are deleted
during the passage from $G_T(n)$ to the Hasse diagram of $\W(F_n)$.
It follows that the distance $d(e,w)$
from the unique minimal element $e$ of $\W(F_n)$ to an arbitrary element $w \in F_n$
is the same, whether measured in $G_T(n)$ or in the Hasse diagram.
Thus, by Theorem~\ref{t.main2} and a well-known property of $S_{n-1}$,
\[
\max_{w \in F_n} d(e,w) = \max_{\pi \in S_{n-1}} d(id,\pi) = {n-1 \choose 2}.
\]
It follows that
\[
\radius(G_T(n)) \le {n-1 \choose 2}.
\]

Consider now an arbitrary $v \in F_n$.
By the proof of Theorem~\ref{t.main2}, $\phi : F_n \to S_{n-1}$ is surjective.
Thus there exists $w \in F_n$ with
\[
\phi(w) = \phi(v) \cdot \pi_0,
\]
where $\pi_0 = [n-1, \ldots, 1] \in S_{n-1}$.
For this $w$ it follows from Lemma~\ref{t.phi_contracts} that
\[
d(v,w) \ge d(\phi(v),\phi(w)) =  {n-1 \choose 2},
\]
and therefore
\[
\radius(G_T(n)) \ge {n-1 \choose 2}.
\]
\end{proof}

Clearly, for any graph $G$,
\[
\radius(G) \le \diameter(G) \le 2 \radius(G).
\]

\begin{corollary}
The diameter of $G_T(n)$ satisfies
\[
{n-1 \choose 2} \le \diameter(G_T(n)) \le 2{n-1 \choose 2}.
\]
\end{corollary}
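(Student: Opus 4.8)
The plan is to derive this statement immediately from the computation of the radius carried out just above. Recall the elementary inequality, valid for every finite connected graph $G$ and stated right before the corollary, that
\[
\radius(G) \le \diameter(G) \le 2\,\radius(G).
\]
The preceding theorem gives $\radius(G_T(n)) = {n-1 \choose 2}$, so substituting this value yields simultaneously the lower bound ${n-1 \choose 2} \le \diameter(G_T(n))$ (from $\radius \le \diameter$) and the upper bound $\diameter(G_T(n)) \le 2{n-1 \choose 2}$ (from $\diameter \le 2\,\radius$). There is essentially nothing further to verify.

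If one prefers a self-contained justification of the lower bound that does not route through the radius, I would instead exhibit an explicit pair of vertices realizing distance ${n-1 \choose 2}$. By surjectivity of $\phi$ (established in the proof of Theorem~\ref{t.main2}) there is $w \in F_n$ with $\phi(w) = \pi_0 = [n-1, \ldots, 1]$. As observed in the proof of the radius theorem, only edges joining vertices of equal rank are removed in passing from $G_T(n)$ to the Hasse diagram of $\W(F_n)$, so $d(e,w)$ equals $\rank(w)$, which by Theorem~\ref{t.main2} and Lemma~\ref{t.rank_eq_inv} equals $\inv(\pi_0) = {n-1 \choose 2}$. Hence $\diameter(G_T(n)) \ge d(e,w) = {n-1 \choose 2}$.

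No step here poses any real obstacle: the only substantive input is the already-proven radius formula, and the bound $\diameter(G) \le 2\,\radius(G)$ is the familiar triangle-inequality observation that every vertex lies within $\radius(G)$ of a fixed central vertex, so any two vertices are within $2\,\radius(G)$ of one another. Accordingly, the proof is a single short deduction.
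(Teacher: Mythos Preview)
Your proposal is correct and matches the paper's intent exactly: the corollary is stated immediately after the general inequality $\radius(G) \le \diameter(G) \le 2\,\radius(G)$ and the radius theorem, with no separate proof given, so the one-line deduction you spell out is precisely what is meant. The alternative direct argument for the lower bound is also fine but unnecessary here.
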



The upper bound can be improved.

\begin{proposition}\label{t.diameter}
The diameter of $G_T(n)$ satisfies
\[
{n-1 \choose 2} \le \diameter(G_T(n)) \le \frac{3}{2}{n-1 \choose 2}.
\]
\end{proposition}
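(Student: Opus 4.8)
The lower bound $\binom{n-1}{2} \le \diameter(G_T(n))$ is immediate, since $\diameter(G_T(n)) \ge \radius(G_T(n)) = \binom{n-1}{2}$. So the real content is the upper bound $\diameter(G_T(n)) \le \frac{3}{2}\binom{n-1}{2}$. The plan is to bound $d(u,v)$ for an arbitrary pair $u, v \in F_n$ by routing a path through the minimal element $e$ in a controlled way, rather than going directly. The naive route $u \to e \to v$ costs $\rank(u) + \rank(v) \le 2\binom{n-1}{2}$, which is exactly the trivial bound; the point is to do better by exploiting the freedom in choosing the geodesics and the structure provided by $\phi$.

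First I would record that, by the proof of the radius theorem, for any $w \in F_n$ the distance $d(e,w)$ in $G_T(n)$ equals $\rank(w) = \inv(\phi(w))$, and that along any geodesic from $e$ the value of $\inv(\phi(\cdot))$ increases by exactly $1$ at each step (Lemmas~\ref{t.rank_eq_inv} and~\ref{t.main-lemma}). Then I would fix $u, v \in F_n$ and consider the permutations $\pi := \phi(u)$ and $\rho := \phi(v)$ in $S_{n-1}$. The idea is to descend from $u$ down to a well-chosen intermediate word $w$ of small rank — in fact to take $w$ to be a point where a geodesic from $u$ toward $e$ and a geodesic from $e$ toward $v$ can be made to meet (or nearly meet). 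Concretely, one descends from $u$ by at most $\rank(u)$ steps to $e$ and ascends to $v$ by at most $\rank(v)$ steps, but one can arrange the two geodesics to share a common initial/terminal segment of length roughly $\min(\rank(u),\rank(v)) - \text{(something)}$, or, more in the spirit of the $3/2$ factor, to exploit that $\inv(\pi) + \inv(\pi^{-1}\rho) \ge \inv(\rho)$-type identities in $S_{n-1}$ let one bound $\rank(u) + \rank(v)$ against $\binom{n-1}{2}$ plus a correction. The cleanest formulation I would aim for: since $\phi$ is surjective and $\max_w \rank(w) = \binom{n-1}{2}$, at least one of $\rank(u), \rank(v)$ is $\le \frac{1}{2}\binom{n-1}{2}$ — no, that is false in general; rather, I would use that $d(u,v) \le d(u,e) + d(e,v)$ but also $d(u,v) \le d(u, w_0) + d(w_0, v)$ for the maximal $w_0 = D_{\mathrm{id}}$ over $u$ (via Theorem~\ref{t.main2}, the interval $[e,u]$ sits inside a copy of $\W(S_{n-1})$), and average the two bounds.

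The key step, and the main obstacle, is to show that for every $u,v$ there is a choice of route achieving $d(u,v) \le \frac{3}{2}\binom{n-1}{2}$. I expect this to reduce to the following statement about $S_{n-1}$: for any $\pi, \rho \in S_{n-1}$, $\min\big(\inv(\pi) + \inv(\rho),\ \inv(\pi_0\pi^{-1}\cdot\text{stuff}) + \cdots\big) \le \frac{3}{2}\binom{n-1}{2}$, i.e.\ that one cannot have both $u$ far from $e$ in the ``$\pi$ direction'' and $v$ far from $e$ in an incompatible direction simultaneously. The mechanism is: if $\rank(u)$ is large, then $\phi(u)$ has many inversions, so $\phi(u)\pi_0$ has few, so one can ascend from $u$ to a maximal element $w_0$ over $u$ (lying in $[e,u]$'s ambient $S_{n-1}$) cheaply, and $w_0$ is then close to $v$ because every maximal element is at distance $\le \binom{n-1}{2}$ from $e$ and hence $\le \binom{n-1}{2} + \binom{n-1}{2}$... the bookkeeping here, balancing $\rank(u)$ against $\binom{n-1}{2} - \rank(u)$ and optimizing the crossover, is where the factor $\frac{3}{2}$ emerges and is the part requiring care. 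I would finish by choosing, for given $u$, whether to route $u \to e \to v$ or $u \to (\text{max over } u) \to \cdots \to v$ depending on whether $\rank(u) \le \frac{1}{2}\binom{n-1}{2}$ or not, and symmetrically in $v$, and check the four cases give a bound of at most $\frac{3}{2}\binom{n-1}{2}$ in each.
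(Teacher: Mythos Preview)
Your proposal has a genuine gap, and the approach as outlined cannot yield the $\tfrac{3}{2}$ factor.

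All the routes you consider pass through $e$ or through maximal elements of $\W(F_n)$. Routing $u \to e \to v$ gives $\rank(u)+\rank(v)$, which can be as large as $2\binom{n-1}{2}$. Your fix is to route via maximal elements when ranks are large: if $\rank(u),\rank(v) > \tfrac{1}{2}\binom{n-1}{2}$, ascend $u \to w_0(u)$ and $v \to w_0(v)$ cheaply. But now you need a bound on $d(w_0(u),w_0(v))$, and nothing in the paper gives one: the map $\phi$ sends every maximal element to $\pi_0$, so Lemma~\ref{t.phi_contracts} provides no lower bound, and Theorem~\ref{t.main2} only describes the internal structure of each interval $[e,w_0]$, not distances between distinct maximal intervals. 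You tacitly acknowledge this when your own bookkeeping collapses back to ``$\le \binom{n-1}{2}+\binom{n-1}{2}$''. The averaging/case-analysis scheme therefore never beats the trivial $2\binom{n-1}{2}$ bound in the hard case. No inequality in $S_{n-1}$ alone can rescue this, since $\phi$ is many-to-one and forgets exactly the information separating two maximal elements.

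The paper's argument is entirely different and does not use $\phi$, $e$, or the poset $\W(F_n)$ at all. It is a direct ``bubble sort'': given $v,w\in F_n$ of length $\ell=n-1$, pick an index $i$ that is a leaf of the tree $G(v)$ (Proposition~\ref{GY22}), so $i$ occurs in a unique factor of $v$. Push that factor to the nearer end of $v$ in at most $\lceil \ell/2\rceil-1$ steps. In $w$, push all $i$-factors toward the same end in at most $\ell-1$ steps, leaving a single $i$-factor there. The end factor is now forced (it equals $(i,c(i))$ or $(i,c^{-1}(i))$ depending on which end), hence agrees in the two words; recurse on the remaining length-$(\ell-1)$ subwords. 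Summing $\sum_{\ell=2}^{n-1}\bigl((\ell-1)+(\lceil \ell/2\rceil-1)\bigr)\le \tfrac{3}{2}\binom{n-1}{2}$ gives the bound. The asymmetry---one word pays $\lceil \ell/2\rceil-1$, the other $\ell-1$---is precisely where the $\tfrac{3}{2}$ arises.
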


\begin{proof}
We shall use the ``bubble sort" algorithm to transform
any two given words $v, w \in F_n$ to a common word
by applying various $R_j$ and $L_j$ operators.

For the initial step note that, since every tree has at least one leaf
(vertex of degree $1$), by Proposition~\ref{GY22}(i) applied to $v$
there is an $1 \le i \le n$ which appears in exactly one of the 
$\ell = n - 1$ factors (transpositions) in $v$.
This unique ``$i$-factor" is within distance $\lceil \ell / 2 \rceil - 1$
from either the first or the last factor (``end factor") in $v$.
Apply a sequence of $R_j$ and $L_j$ operators to ``push" 
this $i$-factor towards the closest end factor without creating 
any additional $i$-factors.
This takes at most $\lceil \ell / 2 \rceil - 1$ steps, resulting in a word
whose only $i$-factor is an end factor.

On the other hand, consider the word $w$: It may contain more than
one $i$-factor, and we want to push all of them toward the end factor
(at the same end as in $v$). This can be done by pushing the $i$-factor
which is furthest from that end factor. At each step the furthermost
$i$-factor gets closer to the end factor, and the total number of $i$-factors
is unchanged or is reduced by $1$. After at most $\ell - 1$ steps we get
from $w$ a word whose only $i$-factor is the end factor. 

Now observe that if the only occurence of $i$ in a decomposition 
of $\pi \in S_n$ ($\pi = [n, \ldots, 1]$ for this initial step) 
is in the first (leftmost) factor then this factor is $(i, \pi(i))$; 
and if the only occurence is in the last factor then this factor is $(i, \pi^{-1}(i))$. 
This implies that the end $i$-factors in the two words obtained from
$v$ and $w$ are equal. 

For the rest of the algorithm we keep this end factor fixed, 
and work only on the subwords containing the other factors.
We repeat the procedure for the two words of length $n-2$
thus obtained from $v$ and $w$.
Obviously, after $n-2$ such steps we get two words of length $1$,
which are necessarily equal. The total number of operators used is at most
\[
\sum_{\ell = n-1}^{2} (\ell-1 + \lceil {\ell/2} \rceil - 1)
= {n-1 \choose 2} + \left \lfloor \frac{(n-2)^2}{4} \right \rfloor \le \frac{3}{2} {n-1 \choose 2},
\]
and this is an upper bound on the diameter of $G_T(n)$.

\end{proof}

\section{Final remarks and open problems}%
\label{section:open_problems}

An improvement of Proposition~\ref{t.diameter}, bounding 
the diameter of the Hurwitz graph, is desired.

\begin{conjecture}
The diameter of $G_T(n)$ is ${n-1\choose 2}+ O(n)$.
\end{conjecture}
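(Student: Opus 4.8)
The plan is to decompose an optimal route between two words into a \emph{permutation phase}, which carries the entire main term $\binom{n-1}{2}$, followed by a short \emph{fiber phase} of length $O(n)$. Concretely, I would establish the upper bound
\[
\diameter(G_T(n)) \le \binom{n-1}{2} + \delta_n, \qquad \delta_n := \max\{\, d(u,u') \;:\; u,u'\in F_n,\ \phi(u)=\phi(u') \,\},
\]
where $\delta_n$ is the largest $G_T(n)$-distance between two words lying in a common fiber of $\phi$. Since the diameter of $\Cayley(S_{n-1})$ equals $\binom{n-1}{2}$ (the inversion number of the longest element), the whole conjecture reduces to the single estimate $\delta_n = O(n)$.

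For the permutation phase, given $v,w\in F_n$ I would fix a geodesic $\phi(v)=\rho_0,\rho_1,\ldots,\rho_m=\phi(w)$ in $\Cayley(S_{n-1})$, with $m=d(\phi(v),\phi(w))\le\binom{n-1}{2}$ and $\rho_{k+1}=\rho_k s_{j_k}$, and lift it one step at a time: if the current word $u$ has $\phi(u)=\rho_k$, then by Lemma~\ref{t.main-lemma} at least one of $R_{j_k}(u)$ or $L_{j_k}(u)$ has $\phi$-image $\rho_k s_{j_k}=\rho_{k+1}$, and I apply such an operator. After $m$ single moves this reaches a word $v'$ with $\phi(v')=\phi(w)$ at $G_T(n)$-distance at most $m$ from $v$. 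The cost of this phase is exactly $d(\phi(v),\phi(w))$, which is unavoidable by the contraction Lemma~\ref{t.phi_contracts}; the fiber phase then links $v'$ to $w$ in at most $\delta_n$ further moves, and summing yields the displayed bound.

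The crux is the fiber phase, i.e.\ the estimate $\delta_n=O(n)$. Inside a fiber $\phi^{-1}(\pi)$ the inversion set $\Inv$ is constant (Proposition~\ref{determine}), so a word is pinned down by how $\Inv$ splits into right, left and neutral inversions, equivalently by the non-crossing tree $G(u)$. The moves that stay in the fiber are precisely the deleted Hasse edges of Corollary~\ref{t.Hasse}, namely the swaps $((a,c),(a,b))\leftrightarrow((b,c),(a,c))$, each altering a single tree edge. I would fix a canonical representative of each fiber and bound by $O(n)$ the number of such moves needed to drive any $u$ to it, whence $d(u,u')\le d(u,\mathrm{canon})+d(\mathrm{canon},u')=O(n)$. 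The decisive structural input, read off from the proof of Proposition~\ref{criterion}, is that one move reclassifies \emph{all} inversions whose interval is nested inside a fixed subinterval $[b,c]$ at once; thus, although $|\Inv|$ can be of order $\binom{n-1}{2}$ and the fibers are huge (over the longest element $\pi_0$ the fiber has Catalan size $C_{n-1}$ by Theorem~\ref{t.main3}), a single move can repair an entire nested block. The natural target is the caterpillar-type word produced by the recursive shuffle in the proof of Lemma~\ref{t.cover-number}, peeling off the outermost edge $(i,j)$ and recursing on the two cyclic intervals.

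The main obstacle is exactly this last estimate, and it has two parts. First, one must show the fiber is connected by a sequence of moves that monotonically approach the canonical form, so that no repair is ever undone. Second, and harder, one must prove that fixing the outermost edge at each recursion level costs only $O(1)$ moves on average rather than $O(n)$: a per-level bound of $O(n)$ would merely reproduce the $O(n^2)$ slack already present in Proposition~\ref{t.diameter}. Demonstrating that a well-chosen chain of nested-block reclassifications reaches the canonical tree in linearly many steps is the heart of the matter, and is what currently keeps the statement at the level of a conjecture.
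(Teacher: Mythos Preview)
This statement is presented in the paper as an open \emph{conjecture}, not a theorem; the paper offers no proof, only SAGE computations for $n<8$ suggesting the sharper value $\lfloor (n-1)^2/2\rfloor - 1$. There is therefore no paper proof to compare your proposal against, and you yourself correctly flag the argument as incomplete in your final paragraph.

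Your reduction to the fiber-diameter estimate $\delta_n = O(n)$ is sound: the permutation-phase lifting works exactly as you describe via Lemma~\ref{t.main-lemma}, and the lower bound $\binom{n-1}{2}$ is already the paper's radius result. The genuine gap is precisely the one you name. One additional caution: you define $\delta_n$ using the full $G_T(n)$-distance, but your fiber-phase sketch then restricts attention to the fiber-preserving moves identified in Corollary~\ref{t.Hasse}. You would first need to establish either that each fiber of $\phi$ is connected under those moves alone, or else allow controlled excursions outside the fiber; neither fact is in the paper. Until the $\delta_n = O(n)$ estimate (or some substitute) is actually proved, the statement remains what the paper says it is: a conjecture.
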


Computations (using SAGE) for $n<8$ seem to suggest that the diameter
is actually $\left\lfloor (n-1)^2 / 2 \right\rfloor -1$.

\bigskip


As noted by an anonymous referee, some of the concepts and questions
presented in this paper may be generalized to other Coxeter groups.

\begin{defn}\label{d.Hurwitz_graph-Coxeter}
Let $W$ be a Coxeter group, $T$ its set of reflections, and $c$ a
Coxeter element in $W$. The {\em Hurwitz graph} $H(W)$ is the graph
with vertices corresponding to all reduced words expressing $c$ in the
alphabet $T$ of all reflections, where adjacency is determined by
a local right or left shift: Two words are adjacent in the graph if they
agree in all but two adjacent letters, which are $(s, t)$ in one word
and either $(t^s, s)$ or $(t, s^t)$ in the other.
\end{defn}

By a result of Bessis~\cite[Prop.\ 1.6.1]{Bessis}, 
the Hurwitz graph of every finite Coxeter group is connected.

\begin{question}
Find the radius and diameter of $H(W)$.
\end{question}

It should be noted that, for every Coxeter group $W$, 
$H(W)$ is a Schreier graph of the braid group of type $A$.


\begin{conjecture}
The radius of $H(B_n)$ is ${n\choose 2}+1$.
\end{conjecture}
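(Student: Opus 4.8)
\medskip

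\noindent\emph{Sketch of a possible approach.}
The plan is to transport the machinery of Sections~\ref{section:map}--\ref{section:diameter} to type $B$. Fix a Coxeter element $c$ of $B_n$, realised as a negative $n$-cycle on $\{\pm 1,\ldots,\pm n\}$, and let $F^B_n$ be the set of its reduced $T$-factorizations, i.e.\ the set of maximal chains of $NC(B_n)$; each such factorization has $n$ factors, exactly one of which is a sign-changing reflection. The Hurwitz operators $R_i,L_i$ $(1\le i\le n-1)$ act on $F^B_n$ and satisfy the braid relations of $S_n$, so, just as in Definition~\ref{d.order}, one obtains a weak order $\W(F^B_n)$ relative to a distinguished ``staircase'' word $e^B$ assembled from the chain of simple reflections. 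The first task is the $B$-analogue of Sections~\ref{section:map}--\ref{section:order}: a linearization $\phi^B\colon F^B_n\to S_n$, built from the partial products $\sigma_j=t_j\cdots t_n$ by recording, at each of the ``positive merge'' steps, which coordinate becomes dominant, together with a $0/1$ statistic $\varepsilon(w)$ recording the relative position of the unique sign-changing factor. The expected outcome is that the rank of $w$ in $\W(F^B_n)$ equals $\inv(\phi^B(w))+\varepsilon(w)$, whose maximum over $F^B_n$ is ${n\choose 2}+1$, and that, as in Corollary~\ref{t.Hasse}, passing from $H(B_n)$ to the undirected Hasse diagram of $\W(F^B_n)$ deletes only edges joining vertices of equal rank. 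Granting this, $d_{H(B_n)}(e^B,w)=\rank(w)$ for every $w\in F^B_n$, the eccentricity of $e^B$ is ${n\choose 2}+1$, and hence $\radius(H(B_n))\le{n\choose 2}+1$.

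For the matching lower bound one wants a contraction of $H(B_n)$ in the spirit of Lemma~\ref{t.phi_contracts}. The naive map $\phi^B\colon H(B_n)\to\Cayley(S_n)$ only forces eccentricity $\ge{n\choose 2}$, so the extra unit must come from the sign statistic. Concretely, the pair $(\phi^B,\varepsilon)$ should be shown to be a graph contraction onto the Cayley graph of the Coxeter group $A_{n-1}\times A_1$ (equivalently $S_n\times\ZZ/2\ZZ$): depending on $w$ and $i$, a single Hurwitz move either changes $\phi^B$ by one generator of $S_n$ while fixing $\varepsilon$, or fixes $\phi^B$ and flips $\varepsilon$, or fixes both. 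Since the Cayley graph of a finite Coxeter group is vertex-transitive, its radius equals its diameter, which is the length of the longest element --- and for $A_{n-1}\times A_1$ this is ${n\choose 2}+1$. Thus, if $(\phi^B,\varepsilon)$ is moreover surjective, every vertex $v$ of $H(B_n)$ has a partner $w$ with $(\phi^B(w),\varepsilon(w))$ equal to the antipode $(\phi^B(v)\pi_0,\,1-\varepsilon(v))$, whence $d(v,w)\ge{n\choose 2}+1$ and therefore $\radius(H(B_n))\ge{n\choose 2}+1$.

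Most of the preparatory work is routine but lengthy: the $B_n$-versions of the Goulden--Yong description (Proposition~\ref{GY22}) and of Lemmas~\ref{phi.local-range}--\ref{t.inversions}, the analogue of Lemma~\ref{t.main-lemma} describing the action of $R_i,L_i$ on $(\phi^B,\varepsilon)$, and the uniqueness of the minimum $e^B$ (analogue of Lemma~\ref{t.unique_id}). The genuinely new obstacle, which I expect to be the crux, is the bookkeeping around the sign-changing reflection: one must show that it interacts with the positive reflections under $R_i,L_i$ exactly so as to realise the extra $A_1$-generator --- the moves sliding it past a positive factor either flip $\varepsilon$ and fix $\phi^B$, or fix both, and never disturb $(\phi^B,\varepsilon)$ in an uncontrolled way --- and that the combined statistic $\inv(\phi^B(\cdot))+\varepsilon(\cdot)$ is maximised at ${n\choose 2}+1$. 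Once this local analysis is in place, the global radius argument follows the type-$A$ template of Section~\ref{section:diameter} almost verbatim, and one also recovers a $B_n$-analogue of the $0$-Hecke action, of the description of maximal intervals, and of the (Catalan and $q$-Catalan) enumeration of maximal elements.
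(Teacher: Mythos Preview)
The statement you are attempting to prove is explicitly labeled a \emph{Conjecture} in the paper, and the paper does not supply a proof; its only support is the sentence ``This conjecture is supported by computer results for $n<7$.'' So there is no paper proof to compare your sketch against.

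As for the sketch itself: it is a plausible outline, but every load-bearing claim is stated as a hope rather than established. In particular, you have not constructed $\phi^B$ or the sign statistic $\varepsilon$, and there is no reason given why the rank function on $\W(F^B_n)$ should equal $\inv(\phi^B(\cdot))+\varepsilon(\cdot)$ or why its maximum should be exactly ${n\choose 2}+1$ (rather than, say, ${n\choose 2}$ or something larger). The crucial step --- that a single Hurwitz move either changes $\phi^B$ by one adjacent transposition and fixes $\varepsilon$, or flips $\varepsilon$ and fixes $\phi^B$, or fixes both --- is asserted but not argued, and in type $B$ the interaction between short and long reflections under conjugation is more intricate than a simple $A_{n-1}\times A_1$ picture would suggest (for instance, conjugating a long reflection by a short one can yield a long reflection with a different support, so it is not obvious that the ``sign position'' behaves as a single $\ZZ/2\ZZ$ coordinate). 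Until these local computations are actually carried out, what you have is a research program, not a proof; the conjecture remains open.
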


This conjecture is supported by computer results for $n<7$.
The computations also hint that the number of antipodes of
$e :=((-1,1), (1,2), (2,3), \ldots, (n-1,n))$ 
(i.e., the number of elements of maximal rank in the associated weak order) 
is given by sequence A054441 in~\cite{Sloane}
which involves central binomial coefficients, namely
the Catalan numbers of type $B$.

\bigskip

\noindent{\bf Acknowledgements.}
Thanks to C.\ Athanasiadis, O.\ Bernardi, A.\ Kalka, C.\ Keller, H.\ Y.\ Khachatryan,
S.\ Margolis, P.\ McNamara, A.\ Postnikov and V.\ Reiner
for useful discussions, comments and references.
Thanks also to the anonymous referees for their stimulating comments.


\begin{thebibliography}{99}

\bibitem{Athanasiadis-R}
C.\ A.\ Athanasiadis and Y.\ Roichman {\em The absolute order of a
permutation representation of a Coxeter group}, J.\ Algebraic
Combin., to appear.

\bibitem{Armstrong}
D.\ Armstrong,
{\em Generalized noncrossing partitions and combinatorics of Coxeter groups},
Mem.\ Amer.\ Math.\ Soc.~{\bf 202} (2009), no.~949.


\bibitem{AutordDehornoy}
M.\ Autord and P.\ Dehornoy,
{\em On the distance between the expressions of a permutation},
{\tt arXiv:0902.3074}.

\bibitem{Bessis}
D.\ Bessis,
{\em The dual braid monoid},
Ann.\ Sci.\ \'Ecole Norm.\ Sup.~(4) {\bf 36} (2003), 647--683.

\bibitem{BB}
A.\ Bj\"{o}rner and F.\ Brenti,
Combinatorics of Coxeter groups,
Graduate Texts in Mathematics 231, Springer, New York, 2005.

\bibitem{Butler}
L.\ M.\ Butler and W.\ P.\ Flanigan,
{\em A note on log-convexity of $q$-Catalan numbers},
Ann.\ Comb.~{\bf 11} (2007), 369--373.

\bibitem{CR}
L.\ Carlitz and J.\ Riordan,
{\em Two element lattice permutations and their $q$-generalization},
Duke J.\ Math.~{\bf 31} (1964) 371--388.

\bibitem{Denes}
J.\ D\'enes,
{\em The representation of a permutation as the product of a minimal number of transpositions
and its connection with the theory of graphs},
Publ.\ Math.\ Inst.\ Hungar.\ Acad.\ Sci.~{\bf 4} (1959), 63--71.

\bibitem{FH}
J.\ F\"urlinger and J.\ Hofbauer,
{\em $q$-Catalan numbers},
J.\ Combin.\ Theory Ser. A~{\bf 40} (1985), 248--264.

\bibitem{GGP}
I.\ M.\ Gelfand, M.\ I.\ Graev and A.\ Postnikov,
{\em Combinatorics of hypergeometric functions associated with positive roots},
in: Arnold-Gelfand Mathematical Seminars: Geometry and Singularity Theory, Birkh\"auser, 1997, 205-–221.

\bibitem{Gewurz-Merola}
D.\ A.\ Gewurz and F.\ Merola,
{\em Some factorisations counted by Catalan numbers},
European J.\ Combin.~{\bf 27} (2006), 990--994.

\bibitem{Goulden-Yong}
I.\ Goulden and A.\ Yong,
{\em Tree-like properties of cycle factorizations},
J.\ Combin.\ Theory Ser.\ A~{\bf 98} (2002), 106--117.

\bibitem{HHMMN}
C.\ Hernando, F.\ Hurtado, A.\ M\'arquez, M.\ Mora and M.\ Noy,
{\em Geometric tree graphs of points in convex position},
Discrete Appl.\ Math.~{\bf 93} (1999), 51--66.

\bibitem{Hurwitz}
A.\ Hurwitz,
{\em Ueber Riemann'sche Fl\"achen mit gegebenen Verzweigungspunkten},
Math.\ Ann.~{\bf 39} (1891), 1--61.

\bibitem{Krammer}
D.\ Krammer, 
{\em The braid group of $\ZZ^n$}, 
J.\ Lond.\ Math.\ Soc.~(2) {\bf 76} (2007), 293--312. 

\bibitem{Kreweras}
G.\ Kreweras,
{\em Sur les partitions non crois\'ees d'un cycle},
Discrete Math.~{\bf 1} (1972), 333--350.

\bibitem{McNamara03} P.\ McNamara,
{\em EL-labelings, supersolvability and 0-Hecke algebra actions on posets},
J.\ Combin.\ Theory Ser.\ A~{\bf 101} (2003), 69--89.

\bibitem{Rains-Vazirani}
E.\ M.\ Rains and M.\ J.\ Vazirani, {\it Deformations of
permutation representations of Coxeter groups}, J.\ Algebraic
Combin.~{\bf 37} (2013), 455--502.

\bibitem{Reiner-R}
V.\ Reiner and Y.\ Roichman,
{\em Diameter of reduced words},
Trans.\ Amer.\ Math.\ Soc.~{\bf 365} (2013), 2779--2802. 

\bibitem{Sagan-Savage}
B.\ Sagan and C.\ Savage,
{\em Mahonian pairs},
{\tt arXiv:1101.4332}.

\bibitem{Sloane}
N.\ J.\ A.\ Sloane,
The On-Line Encyclopedia of Integer Sequences. 

\bibitem{Strehl}
V.\ Strehl,
{\em Minimal transitive products of transpositions -- the reconstruction of a proof of A.\ Hurwitz},
S\'em.\ Lothar.\ Combin.~{\bf 37} (1996), Art.~S37c, 12 pp.\ (electronic).

\end{thebibliography}
\end{document}